\definecolor{darkblue}{rgb}{0,0,0.7}
\definecolor{darkred}{rgb}{0.7,0,0}
\newtheorem{proposition}{Proposition}[section]
\newtheorem{lemma}[proposition]{Lemma}
\newtheorem{theorem}[proposition]{Theorem}
\newtheorem{corollary}[proposition]{Corollary}
\newtheorem{question}[proposition]{Question}
\theoremstyle{definition}
\newtheorem{remark}[proposition]{Remark}
\newtheorem{definition}[proposition]{Definition}
\newenvironment{customthm}[1]
  {\innercustomthm\itshape}
  {\endinnercustomthm}
\newcommand{\reg}{{\rm reg}}
\newcommand{\depth}{{\rm depth}}
\newcommand{\Fnumber}{\nu_\mathtt{F}}
\def\H{\mathcal{H}}
\def\H{\mathcal{H}}
\def\K{\mathbb{K}}
\def\G{\mathcal{G}}
\def\C{\mathcal{C}}
\def\B{\mathcal{B}}
\def\E{\mathcal{E}}
\def\N{\mathcal{N}}
\def\reg{\mathrm{reg}}
\def\adms{\mathrm{ind}}
\def\aim{\mathrm{aim}}
\def\l{(}
\def\r{)}
\def\x{\mathbf x}
\def\Bl{\mathcal{B}_G}
\def\p{\mathfrak p}
\def\height{\mathrm{ht}}
\def\supp{\mathrm{supp}}
\def\ass{\mathrm{Ass}}
\def\sqf{\mathrm{sqf}}
\def\adm{\mathrm{adm}}
\def\ind{\alpha}
\def\F{\mathtt{F}}
\def\height{\mathrm{ht}}
\def\supp{\mathrm{supp}}
\def\ass{\mathrm{Ass}}
\def\sqf{\mathrm{sqf}}
\begin{document}

\title[Squarefree-power-like function]{Admissible set and squarefree-power-like function with applications to squarefree symbolic powers}

\author{Trung Chau}
\address{Chennai Mathematical Institute, India}
\email{chauchitrung1996@gmail.com}

\author{Kanoy Kumar Das}
\address{Chennai Mathematical Institute, India}
\email{kanoydas@cmi.ac.in; kanoydas0296@gmail.com}

\author{Amit Roy}
\address{Chennai Mathematical Institute, India}
\email{amitiisermohali493@gmail.com}

\author{Kamalesh Saha}
\address{Department of Mathematics, SRM University-AP, Amaravati 522240, Andhra Pradesh, India}
\email{kamalesh.s@srmap.edu.in; kamalesh.saha44@gmail.com}

\keywords{squarefree-power-like functions, Castelnuovo-Mumford regularity, admissible set, squarefree powers, squarefree symbolic powers, block graphs, Cohen-Macaulay chordal graphs}
\subjclass[2020]{Primary: 13D02, 05E40, 13F55, 05C70; Secondary: 13H10, 05C65}

\vspace*{-0.4cm}
\begin{abstract}
    We introduce the abstract notion of squarefree-power-like functions, which unify the sequences of squarefree ordinary and symbolic powers of squarefree monomial ideals. By employing the Tor-vanishing criteria for mixed sums of ideals, we establish sharp lower bounds for their Castelnuovo-Mumford regularity in terms of what we call the \emph{admissible set} of the associated hypergraph. As an application, we derive the first general combinatorial lower bound for the regularity of squarefree symbolic powers of monomial ideals. In the setting of edge ideals, by exploiting the special combinatorial structures of block graphs and Cohen-Macaulay chordal graphs, we show that this bound turns into an exact formula for all squarefree symbolic powers of block graphs, as well as for the second squarefree symbolic powers of edge ideals of Cohen-Macaulay chordal graphs.
\end{abstract}

\maketitle


\section{Introduction}

The study of various powers of ideals, such as ordinary powers, symbolic powers, integral closure powers, etc. has long been a central theme in commutative algebra. Much of the work in this area focuses on homological invariants of these powers and seeks to understand their asymptotic behavior. A classical result in this direction is Brodmann’s theorem \cite{Brodmann1979}, which establishes the stability of the associated primes of powers of ideals. Cutkosky, Herzog, and Trung \cite{CutkoskyHerzogTrung1999}, and independently Kodiyalam \cite{Kodiyalam1999}, proved that the regularity of powers of a homogeneous ideal eventually becomes a linear function; similar results hold in greater generality \cite{HoaTrung2007}, and in particular for symbolic powers of ideals \cite{DungHienNguyenTrung2021,HerzogHibiTrung(VertexCoverAlgebras)2007}. In contrast, the study of specific small powers of a given class of ideals is much more challenging, as many invariants tend to behave unpredictably for lower powers (see, for instance, \cite{MinhEtAl(SmallPowers)2022, MinhVu(SmallPowers)2024}).


Squarefree powers were introduced very recently in \cite{BHZN} and have since been investigated extensively by several authors (see, for example, \cite{CFL1, DRS20242, CMSimplicialForests, EHHS, FiHeHi, ficarraCM2024, KaNaQu, Fakhari2024}, and the references therein). There are two important motivations to study squarefree powers. First, they are closely connected to matching theory, providing an algebraic framework to study matchings in a graph. Secondly, the minimal free resolution of a squarefree power of an ideal appears as a subcomplex of the minimal free resolution of the corresponding ordinary power; hence, squarefree powers of an ideal provide valuable information about the ordinary powers. Interestingly, from the definition of squarefree powers, it follows that for any given squarefree monomial ideal $I$, there exists an integer $k>0$ such that $I^{[n]}=0$ for all $n \geq k$. Thus, unlike ordinary powers, square-free powers do not admit an asymptotic behavior, making the study of their homological invariants significantly different.

Recently, Fakhari introduced a symbolic analogue of this new notion in~\cite{FakhariSymbSq-free}, referring to it as the \emph{squarefree part of symbolic powers}. In this article, we refer to these simply as squarefree symbolic powers (defined in \Cref{sec: Preli}). The sequences of squarefree (ordinary) powers and squarefree symbolic powers of squarefree monomial ideals exhibit similar behaviors, for instance, both are decreasing and satisfy certain Tor-vanishing conditions (see \Cref{sec: Preli} for details). A central motivation for this work is to explore and unify these two sequences within a broader framework, with the aim of studying their Castelnuovo–Mumford regularity. This leads us to the following guiding question:


\begin{question}
Let $\mathcal{I} = \{I_i\}_{i \geq 0}$ be a decreasing sequence of squarefree monomial ideals arising from various powers of a monomial ideal in a polynomial ring. Is there an effective way to provide a combinatorial bound for the Castelnuovo–Mumford regularity for all such ideals?
\end{question}

To address this question, we introduce the notion of \emph{squarefree-power-like function} $\F$ along with an associated combinatorial invariant, which we call the  \emph{$k$-admissible $\F$-number}. Let $I$ be a squarefree monomial ideal and $\mathcal{G}(I)$ be the set of minimal monomial generators of $I$. We can consider $I$ as the edge ideal $I=I(\H)$, where $\H$ is the hypergraph corresponding to the ideal $I$. For each positive integer $k\geq 1$, we define the function $\F(\H, k)$ to be a squarefree monomial ideal, which satisfies some desired properties (see \Cref{def:sq-free power like}). This, in particular, provides a shared platform to study both the notions of squarefree powers $I^{[k]}$ and squarefree symbolic powers $I^{\{k\}}$. We further introduce the concept of a \emph{$k$-admissible $\F$-set} of $\H$ and define the associated combinatorial invariant, the \emph{$k$-admissible $\F$-number}, denoted by $\adm^{\F}(\H,k)$. This invariant plays a key role in establishing one of the main results of the paper described below.

\begin{customthm}{\ref{thm:lower-bound}}
Let $\H$ be a hypergraph and $\F$ a squarefree-power-like function. Then, for any $1 \le k \le \nu_{\F}(\H)$, we have
\[
\reg \left( \F(\H,k) \right) \geq \adm^{\F}(\H,k) + k.
\]
\end{customthm}

We note that this bound is sharp when $I(\H)$ is a complete intersection, that is, when $\H$ is a disjoint union of edges (see \Cref{rmk: CI reg}). Furthermore, we show that both squarefree powers and squarefree symbolic powers fit into this framework (see \Cref{ordinary SPLF} and \Cref{Symbolic SPLF}). In particular, the lower bound in \Cref{thm:lower-bound} recovers the bound established for squarefree powers in \cite[Theorem 3.11]{ChauDasRoySahaSqfOrd}. 

For squarefree symbolic powers $I(\H)^{\{k\}}$, we refine the concept of $k$-admissible $\F$-sets by incorporating combinatorial invariants of the underlying hypergraph. This leads to the definition of \emph{$k$-admissible independence number} of $\H$, denoted by $\adms(\H, k)$ (see \Cref{def: sym aim for graph}). It follows from the definition of squarefree symbolic powers that $I(\H)^{\{k\}}=0$ whenever $k> \height(I(\H))$ and using \Cref{thm:lower-bound} we establish that for all $1\leq k\leq \height(I(\H))$, $ \reg (I(\H)^{\{k\}}) \geq \adms(\H,k)+k$ (\Cref{cor:lower bound sqf symbolic}). It is worthwhile to mention that the only known lower bound for the regularity of squarefree symbolic powers is provided by Fakhari in \cite[Theorem 3.3]{FakhariSymbSq-free} for a graph $G$, and the power $k$ is always bounded above by the induced matching number $\nu(G)$ in this case. Note that, in general, $\nu(G)\le \height(I(G))$ for an arbitrary graph $G$. Moreover, the difference can be arbitrarily large (for instance, complete graphs). Thus, \Cref{cor:lower bound sqf symbolic} substantially extends the only known lower bound for the regularity of squarefree symbolic powers to all squarefree symbolic powers of edge ideals of arbitrary hypergraphs.


We further investigate the sharpness and compatibility of our bounds with those from \cite{ChauDasRoySahaSqfOrd, CFL1, ErHi1, FakhariSymbSq-free}. For complete graphs, the lower bound is attained (see \Cref{lem: reg of complete}). In \cite{CFL1}, Crupi, Ficarra, and Lax refined the earlier bound of \cite{ErHi1} and derived an exact formula for the regularity of squarefree powers of forests in terms of the combinatorial invariant $\aim(G,k)$. This result was later extended to block graphs in \cite{ChauDasRoySahaSqfOrd}. Since block graphs form a natural and important subclass of chordal graphs that generalizes forests, the comparison between these invariants becomes particularly meaningful. Notably, for the class of forests $I(G)^{\{k\}}=I(G)^{[k]}$ and the notion $\adms(G,k)$ coincides with $\aim(G,k)$. However, for general block graphs, neither of these identities needs to hold, that is, $I(G)^{\{k\}}\neq I(G)^{[k]}$ and $\adms(G,k) \neq \aim(G,k)$, in general. Our second main goal in this context is to find an exact formula for the regularity of squarefree symbolic powers of edge ideals of block graphs in the spirit of \cite[Theorem 4.12]{ChauDasRoySahaSqfOrd}. Interestingly, in this case, we are able to explicitly determine $\reg(I(G)^{\{k\}})$ in terms of the combinatorial invariant $\adms(G,k)$, and we do this by establishing several technical lemmas and combinatorial constructions to track $\adms(G,k)$ and the minimal generators of $I(G)^{\{k\}}$. In particular, this leads to our second main theorem, stated as follows:

\begin{customthm}{\ref{thm: block graph main}}
Let $G$ be a block graph. Then for each $1 \le k \le \height(I(G))$, we have
\[
\reg(I(G)^{\{k\}}) = \adms(G,k) + k.
\]
    
\end{customthm}

\noindent
Moreover, we show in \Cref{thm: CMchordal} that a similar formula holds for the second squarefree symbolic power of Cohen-Macaulay chordal graphs, analogous to \cite[Theorem 5.8]{ChauDasRoySahaSqfOrd}. 

This paper is organized as follows. In \Cref{sec: Preli}, we recall some preliminary notions from combinatorics and commutative algebra. We also review homological properties of filtrations of monomial ideals and study how they can be adapted for the case of squarefree powers. The main section of this article is \Cref{sec: main}, where we introduce the abstract notion of squarefree-power-like function and establish a general lower bound for regularity. As an application, in \Cref{sec: app to sym pwr}, we relate this general lower bound to certain well-known graph-theoretic invariants, which yields a sharp combinatorial lower bound for the squarefree symbolic powers of monomial ideals. Furthermore, in \Cref{sec: block} and \ref{sec: CM chordal}, we show that this general lower bound is attained for all squarefree symbolic powers of block graphs, and for the second squarefree symbolic powers of Cohen-Macaulay chordal graphs, respectively.

\section{Preliminaries and basic results}\label{sec: Preli}

In this section, we begin by recalling some preliminary notions from combinatorics and commutative algebra, and we refer to \cite{BM76,Edmonds,RHV} for any undefined terminology. After setting up the necessary background, we proceed to derive results concerning the regularity of the mixed sum of eventually vanishing decreasing sequences of monomial ideals. These results will play a central role in the latter part of the article.
	
	\subsection{Hypergraph}
	
	A (simple) hypergraph $\H$ is an ordered pair $\H=(V(\H),E(\H))$, where $V(\H)$ is a finite set, called the \textit{vertex set}, and $E(\H)$ is a family of subsets of $V(\H)$, called the \textit{edge set}, such that no two elements contain one another. The elements of $V(\H)$ are referred to as \textit{vertices}, and the elements of $E(\H)$ as \textit{edges}. For $W\subseteq V(\H)$, the \textit{induced sub-hypergraph} of $\H$ on $W$, denoted by $\H[W]$, is defined by $
	V(\H[W])=W \, \text{and} \, E(\H[W])=\{\E\in E(\H)\mid \E \subseteq W\}$. The hypergraph $\H[V(\H)\setminus W]$ is also denoted by $\H\setminus W$. If $W=\{x\}$ for some $x\in V(\H)$, then we simply write $\H\setminus x$. A \textit{matching} of $\H$ is a collection of edges $M\subseteq E(\H)$ such that $\E\cap \E'=\emptyset$ for any distinct $\E,\E'\in M$. A matching $M$ is called an \textit{induced matching} if $
	E\big(\H\big[\cup_{\E\in M}\E\big]\big)=M$.
	The \textit{induced matching number} of $\H$, denoted by $\nu_1(\H)$, is the maximum cardinality of an induced matching in $\H$.
	
	A (simple) \textit{graph} $G$ is a hypergraph whose edges all have cardinality two.  
	For $A\subseteq V(G)$, the set of \textit{closed neighbors} of $A$ is defined by $
	N_{G}[A]\coloneqq A\cup \{x\in V(G)\mid \{x,y\}\in E(G)\ \text{for some } y\in A\}$. The set of \textit{open neighbors} of $A$ is $
	N_{G}(A)\coloneqq N_{G}[A]\setminus A$.
	If $A=\{x\}$, then we simply write $N_G(x)$ for $N_G(\{x\})$. For $x\in V(G)$, the number $|N_G(x)|$ is called the \textit{degree} of $x$ and is denoted by $\deg(x)$. If $\deg(x)=1$, then the unique edge adjacent to $x$ is called a whisker of $G$. Given two vertices $x,y\in V(G)$, an \textit{induced path of length $n$} between $x$ and $y$ is a sequence of vertices $P: a_1,\ldots,a_n$ such that $a_1=x$, $a_n=y$, and $\{a_i,a_j\}\in E(G)$ if and only if $j=i-1$ or $j=i+1$. A \textit{complete graph} on $n$ vertices, denoted $K_n$, is a graph in which every pair of vertices is joined by an edge. A \textit{clique} of a graph $G$ is a complete induced subgraph of $G$. A maximal clique in $G$ is called a {\it block}. A vertex $x\in V(G)$ is called a \textit{free vertex} (or a \textit{simplicial vertex}) of $G$ if $G[N_{G}(x)]$ is complete. Two edges $e,e'\in E(G)$ are said to form a \textit{gap} in $G$ if $\{e,e'\}$ is an induced matching in $G$. The \textit{complement} of a graph $G$, denoted $G^c$, is the graph with $V(G^c)=V(G) \, \text{and}\, 
	E(G^c)=\{\{x,y\}\mid \{x,y\}\notin E(G)\}$.
	
	A \textit{cycle} of length $m$, denoted $C_m$, is the graph with $V(C_m)=\{x_1,\ldots,x_m\}, \, 
	E(C_m)=\{\{x_1,x_m\}\}\cup\{\{x_i,x_{i+1}\}\mid 1\le i\le m-1\}$. The cycle $C_3$ is also sometimes called a {\it triangle}. A graph $G$ is called a \textit{forest} if it contains no cycle. A connected forest is called a \textit{tree}. A graph $G$ is called \textit{chordal} if it has no induced cycle $C_n$ with $n\geq 4$. A graph is said to be \textit{weakly chordal} if both $G$ and $G^c$ contain no induced cycle $C_n$ with $n\geq 5$. Note that every chordal graph is weakly chordal. A \emph{block graph} $G$ is a chordal graph such that any two blocks in $G$ intersect in at most one common vertex. A graph $G$ is said to be a whisker graph if every vertex of $G$ is either of degree $1$ or is a neighbor of a degree $1$ vertex.
		
	\subsection{Free resolution and square-free power} For a graded ideal $I\subseteq R$, the graded minimal free resolution of $I$ is an exact sequence
	\[
	\mathcal F_{\cdot}: \quad 
	0 \longrightarrow F_t \xrightarrow{\partial_{t}} \cdots 
	\xrightarrow{\partial_{2}} F_1 \xrightarrow{\partial_1} F_0 
	\xrightarrow{\partial_0} I \longrightarrow 0,
	\]
	where $F_i=\bigoplus_{j\in\mathbb N} R(-j)^{\beta_{i,j}(I)}$ is a free $R$-module for each $i\ge 0$, and $R(-j)$ is the polynomial ring $R$ with grading shifted by $j$.  
	The integers $\beta_{i,j}(I)$ are called the $i^{\text{th}}$ $\mathbb N$-graded \textit{Betti numbers} of $I$ in degree $j$. The \textit{Castelnuovo--Mumford regularity} (or simply, \textit{regularity}) of $I$, denoted by $\reg(I)$, is defined as $\reg(I)\coloneqq \max\{\, j-i \mid \beta_{i,j}(I)\neq 0 \,\}$. If $I$ is the zero ideal, we adopt the convention $\reg(I)=0$. Moreover, we define $\reg(R)\coloneqq 0$. The following two well-known results on the regularity of graded ideals will be used frequently in subsequent sections.
	
	\begin{lemma}{\cite[Lemma~2.10]{DHS}}\label{regularity lemma}
		Let $I \subseteq R=\mathbb K[x_1,\ldots,x_n]$ be a monomial ideal, $m$ a monomial of degree $d$ in $R$, and $x$ an indeterminate in $R$. Then:
		\begin{enumerate}[label=(\roman*)]
			\item $\reg(I+( x))\le \reg(I)$,
			\item $\reg(I:x)\le \reg(I)$,
			\item $\reg(I) \le \max\{\reg(I:m) + d,\ \reg(I+( m))\}$.
		\end{enumerate}
	\end{lemma}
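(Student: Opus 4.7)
The plan is to prove all three parts via the short exact sequence
$$0 \longrightarrow (R/(I:m))(-d) \xrightarrow{\;\cdot m\;} R/I \longrightarrow R/(I+(m)) \longrightarrow 0,$$
which is exact for any monomial $m$ of degree $d$, combined with the translation $\reg(J) = \reg(R/J) + 1$ for a nonzero proper homogeneous ideal, and the standard regularity estimates for a short exact sequence $0 \to A \to B \to C \to 0$ of finitely generated graded modules:
$$\reg(B) \le \max\{\reg(A),\, \reg(C)\}, \qquad \reg(A) \le \max\{\reg(B),\, \reg(C)+1\}, \qquad \reg(C) \le \max\{\reg(B),\, \reg(A)-1\}.$$

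Part (iii) should fall out most directly: applying the first of these inequalities to the displayed sequence gives $\reg(R/I) \le \max\{\reg(R/(I:m)) + d,\ \reg(R/(I+(m)))\}$, and adding one to both sides produces the stated bound on $\reg(I)$. For parts (i) and (ii), I would specialize to $m = x$. The third SES inequality then gives $\reg(R/(I+(x))) \le \max\{\reg(R/I),\ \reg(R/(I:x))\}$, while the second yields $\reg(R/(I:x)) + 1 \le \max\{\reg(R/I),\ \reg(R/(I+(x))) + 1\}$, so each of (i) and (ii) will follow from the SES as soon as the other is available.

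The main obstacle is to break this apparent circularity, since neither (i) nor (ii) is an immediate consequence of the SES alone. I would handle it by proving (ii) directly using the monomial structure: writing $I = xJ + K$, where $K$ is generated by the minimal generators of $I$ not divisible by $x$, one has $I:x = J + K$, and the generators of $J$ are obtained from those of $xJ$ by dividing out $x$. A comparison of Betti numbers, either via polarization together with a Betti-splitting argument, or through induction on the number of minimal generators of $I$ divisible by $x$, should then force $\reg(R/(I:x)) \le \reg(R/I)$. Substituting this bound back into the SES inequality for $R/(I+(x))$ immediately yields (i), completing the proof.
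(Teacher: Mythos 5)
The paper offers no proof of this lemma --- it is imported verbatim from \cite[Lemma~2.10]{DHS} --- so there is no in-paper argument to compare against; I am judging your proposal on its own terms. Your part (iii) is correct and standard: the sequence $0 \to (R/(I:m))(-d) \xrightarrow{\cdot m} R/I \to R/(I+(m)) \to 0$ together with $\reg(B)\le\max\{\reg(A),\reg(C)\}$ gives exactly the claim, and the degenerate cases ($m\in I$, so $R/(I:m)=0$) are harmless. The deduction of (i) from (ii) via the same sequence is also fine. The genuine gap is in (ii), which is the only part of the lemma requiring real work, and your argument for it never gets past ``should then force.'' Concretely: the $x$-partition $I=xJ+K$ is \emph{not} unconditionally a Betti splitting --- the criterion of \cite{FranciscoHaVanTuyl2009} needs extra hypotheses such as $xJ$ having a linear resolution --- and even granting a splitting of both $xJ+K$ and $J+K$ you would still have to compare the two maxima, which works but rests on the unproved splitting hypothesis. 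The proposed induction on the number of generators divisible by $x$ fares no better: replacing a generator $u$ by $u/x$ leaves $I:x$ unchanged but forces you to know $\reg(I+(u/x))\le\reg(I)$, which is essentially an instance of the statement being proved (and is false for arbitrary monomials in place of $u/x$). So as written, (ii), and hence (i), is not established.

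A clean way to close the gap is to reverse your order and prove (i) first. Every generator of $I$ divisible by $x$ is redundant modulo $(x)$, so $I+(x)=(x)+I''$ where $I''$ is generated in the variables other than $x$; by \Cref{reg sum}, $\reg(I+(x))=\max\{1,\reg(I'')\}$, and after polarizing, $I''$ is a restriction $I^{\le m}$ of $I$, whence $\reg(I'')\le\reg(I)$ by the Restriction Lemma \cite[Lemma~4.4]{HHZ04}; since $\reg(I)\ge 1$ for a nonzero proper monomial ideal, (i) follows. Then (ii) follows from (i): multiplication by $x$ gives a graded isomorphism $(I:x)(-1)\cong I\cap(x)$, so $\reg(I:x)=\reg(I\cap(x))-1$, and the Mayer--Vietoris sequence $0\to I\cap(x)\to I\oplus(x)\to I+(x)\to 0$ gives $\reg(I\cap(x))\le\max\{\reg(I),\,1,\,\reg(I+(x))+1\}\le\reg(I)+1$, i.e.\ $\reg(I:x)\le\reg(I)$. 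With (i) and (ii) so established, your SES bookkeeping for (iii) stands and the lemma is proved.
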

	
	\begin{lemma}{\cite{RHV}}\label{reg sum}
		Let $I_1\subseteq R_1=\mathbb K[x_1,\ldots,x_m]$ and $I_2\subseteq R_2=\mathbb K[x_{m+1},\ldots,x_n]$ be graded ideals.  
		Consider the ideal $I=I_1R+I_2R \subseteq R=\mathbb K[x_1,\ldots,x_n]$. Then
		\[
		\reg(I)=\reg(I_1)+\reg(I_2)-1.
		\]
	\end{lemma}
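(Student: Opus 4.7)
The plan is to reduce the statement to quotient rings and exploit a Künneth-type decomposition of Betti numbers made possible by the disjointness of the two sets of variables. First I would apply the standard identity $\reg(I) = \reg(R/I) + 1$ (for any nonzero proper graded ideal) to reformulate the claim as
\[
\reg(R/I) = \reg(R_1/I_1) + \reg(R_2/I_2).
\]
Because the variable sets $\{x_1,\dots,x_m\}$ and $\{x_{m+1},\dots,x_n\}$ are disjoint, one has $R \cong R_1 \otimes_{\mathbb K} R_2$ and $R/I \cong (R_1/I_1) \otimes_{\mathbb K} (R_2/I_2)$.

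Next I would build a minimal resolution of $R/I$ by tensoring. Let $\mathbb F_\bullet \to R_1/I_1$ and $\mathbb G_\bullet \to R_2/I_2$ be minimal graded free resolutions over $R_1$ and $R_2$ respectively. After base-changing each to $R$ (a flat operation) and tensoring over $R$, the complex
\[
(\mathbb F_\bullet \otimes_{R_1} R) \otimes_R (\mathbb G_\bullet \otimes_{R_2} R)
\]
is a minimal graded free resolution of $R/I$ over $R$. Exactness follows from the Künneth-type vanishing $\operatorname{Tor}_i^{\mathbb K}(R_1/I_1,\, R_2/I_2) = 0$ for $i > 0$. Minimality is the key point: the differentials of $\mathbb F_\bullet$ have entries in $(x_1,\dots,x_m) \subseteq \mathfrak m_R$ and those of $\mathbb G_\bullet$ in $(x_{m+1},\dots,x_n) \subseteq \mathfrak m_R$, so the tensor-product differential lies in $\mathfrak m_R$. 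Consequently the Betti numbers multiply,
\[
\beta^R_{i,j}(R/I) \;=\; \sum_{\substack{p+q=i \\ a+b=j}} \beta^{R_1}_{p,a}(R_1/I_1)\, \beta^{R_2}_{q,b}(R_2/I_2).
\]

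Finally I would read off regularity. Writing $j-i = (a-p) + (b-q)$, a product of Betti numbers in the above sum is nonzero precisely when both factors are nonzero, so the supremum of $j-i$ over indices with $\beta^R_{i,j}(R/I) \neq 0$ decouples into a sum of individual suprema. This gives $\reg(R/I) = \reg(R_1/I_1) + \reg(R_2/I_2)$, and converting back to ideals costs one $+1$ rather than two, producing the advertised formula. The main obstacle is confirming minimality of the tensor-product resolution together with the graded-shift bookkeeping; everything else is a clean assembly of standard homological algebra over polynomial rings, and the degenerate situations $I_1 = 0$ or $I_2 = 0$ lie outside the intended scope (where the $\reg$ conventions of the text do not match the formula).
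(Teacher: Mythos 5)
Your proof is correct. The paper does not prove this lemma but simply cites it to \cite{RHV}, and your argument --- tensoring the two minimal free resolutions over $\mathbb{K}$, checking exactness via K\"unneth and minimality via the disjointness of the variable sets, and reading off $\reg(R/I)=\reg(R_1/I_1)+\reg(R_2/I_2)$ from the resulting product formula for Betti numbers --- is precisely the standard argument behind the cited result, with the caveat about $I_1$ or $I_2$ being zero correctly flagged as outside the intended scope.
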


	Let $\H$ be a hypergraph on the vertex set $V(\H)=\{x_1,\ldots,x_n\}$. Identifying the vertices of $\H$ as indeterminates, we consider the polynomial ring $R=\mathbb K[x_1,\ldots,x_n]$ over a field $\mathbb K$. Corresponding to any $F\subseteq \{x_1,\ldots,x_n\}$, we denote the square-free monomial $\prod_{x_i\in F}x_i$ of $R$ by $\x_{F}$. The edge ideal of $\H$, denoted by $I(\H)$, is a square-free monomial ideal of $R$ defined as 
	\[ I(\H)\coloneqq \left ( \x_{\E}\mid \E\in E(\H)\right ). \]
	Note that any square-free monomial ideal can be seen as the edge ideal of a hypergraph. In general, for a monomial ideal $I$, we denote by $\mathrm{Ass}_I$ the set of its minimal prime ideals.  
Given an integer $k \geq 1$, the {\it $s^{\text{th}}$ symbolic power} of $I$, written as $I^{(k)}$, is defined by
\[
I^{(k)} \coloneqq 
\bigcap\limits_{\mathfrak p \in \mathrm{Ass}_I} 
\ker\!\left(R \longrightarrow (R/I^k)_{\mathfrak p}\right).
\]
Suppose now that $I$ is a squarefree monomial ideal in $R$, and let $
I = \mathfrak p_1 \cap \cdots \cap \mathfrak p_r$ be its irredundant primary decomposition, where each $\mathfrak p_i$ is generated by a subset of the variables of $S$.  
Then, by \cite[Proposition 1.4.4]{HHBook}, for every $k \geq 1$ we have
\[
I^{(k)} = \mathfrak p_1^k \cap \cdots \cap \mathfrak p_r^k.
\]
The ideal generated by the square-free monomials in $\G(I^{(k})$ is denoted by $I^{\{k\}}$, and is called as the {\it $k^{\text{th}}$ squarefree symbolic power} of $I$. Thus, by definition, we obtain the following.
\begin{lemma}\textup{(cf. \cite{FakhariSymbSq-free})}\label{sqfSymbolic}
    Let $I\subseteq R$ be a squarefree monomial ideal, and $I=P_1\cap\cdots \cap P_r$ be an irredundant primary decomposition, where every $P_i$ is generated by a subset of the variables in $R$. Then 
    \[
    I^{\{k\}}=P_1^{\{k\}}\cap \cdots \cap P_r^{\{k\}}.
    \]
\end{lemma}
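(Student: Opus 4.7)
The plan is to reduce the claimed identity of ideals to an equality of their squarefree monomial contents, exploiting the elementary fact that a squarefree monomial ideal is determined by the set of squarefree monomials it contains. First, since every $P_i$ is a prime ideal generated by variables, $P_i^{(k)} = P_i^k$, so by definition $P_i^{\{k\}}$ is the ideal generated by the squarefree monomials in $P_i^k$. In particular, $P_i^{\{k\}}$ is a squarefree monomial ideal, and a squarefree monomial $m$ belongs to $P_i^{\{k\}}$ if and only if $m \in P_i^k$: one direction is the inclusion $P_i^{\{k\}} \subseteq P_i^k$, and the other is immediate from the defining generating set of $P_i^{\{k\}}$ (together with the observation that any monomial divisor of a squarefree monomial is itself squarefree, so the generators of $P_i^{\{k\}}$ can equivalently be taken as all squarefree monomials of $P_i^k$).

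Next, I would note that the intersection $J := P_1^{\{k\}} \cap \cdots \cap P_r^{\{k\}}$ of squarefree monomial ideals is again a squarefree monomial ideal, and therefore $J$ is generated by the squarefree monomials it contains. Chaining the equivalence from the previous step over all $i$, a squarefree monomial $m$ lies in $J$ if and only if $m \in P_i^k$ for every $i$, if and only if $m \in \bigcap_{i=1}^{r} P_i^k = I^{(k)}$, where the final equality is the primary decomposition recalled just before the lemma. By the very definition of $I^{\{k\}}$, the ideal on the right of the equivalence is generated by exactly these squarefree monomials, and hence $J = I^{\{k\}}$.

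I do not anticipate any genuine obstacle: the argument is a purely formal unpacking of the definitions, once the primary decomposition $I^{(k)} = P_1^k \cap \cdots \cap P_r^k$ is in hand. The only delicate point to flag is the equivalence of the two possible readings of ``the ideal generated by the squarefree monomials in $\G(I^{(k)})$'' — namely, generated by the squarefree minimal generators versus generated by all squarefree monomials of $I^{(k)}$ — which are the same because any squarefree monomial in $I^{(k)}$ is divisible by a (necessarily squarefree) minimal generator.
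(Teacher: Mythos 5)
Your argument is correct and is exactly the formal unpacking the paper has in mind: it states the lemma as following ``by definition'' immediately after recalling the decomposition $I^{(k)}=\mathfrak p_1^k\cap\cdots\cap\mathfrak p_r^k$, and your chain of equivalences on squarefree monomials (together with the observation that a squarefree monomial ideal is determined by the squarefree monomials it contains) is precisely the missing verification. Your remark reconciling ``squarefree elements of $\G(I^{(k)})$'' with ``all squarefree monomials of $I^{(k)}$'' is the right point to flag, and your resolution of it is correct.
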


\noindent
By convention, $I^{\{k\}}=R$ for all $k\le 0$. The following two lemmas on the squarefree symbolic powers of edge ideals of graphs will be heavily used in the subsequent sections.

\begin{lemma}\cite[Lemma 4.1]{FakhariSymbSq-free}\label{lem: symb sq-free 1}
    Let $G$ be a graph and $x_1$ is a simplicial vertex of $G$ with $N_G[x_1]=\{x_1,\ldots , x_d\}$, for some integer $d\geq 2$. Then for every integer $k\geq 1$,
    \[
    (I(G)^{\{k\}}:x_1x_2\cdots x_d)=I(G\setminus N_G[x_1])^{\{k-d+1\}}.
    \]
\end{lemma}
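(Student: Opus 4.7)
The plan is to reduce the colon computation to a prime-by-prime analysis using the primary decomposition formula from \Cref{sqfSymbolic}. Writing $I(G) = P_1 \cap \cdots \cap P_r$, where each $P_j$ is the prime generated by a minimal vertex cover $C_j$ of $G$, we have $I(G)^{\{k\}} = \bigcap_j P_j^{\{k\}}$, and since the colon of a monomial ideal with a monomial commutes with intersections,
\[
(I(G)^{\{k\}} : x_1\cdots x_d) \;=\; \bigcap_{j=1}^{r} (P_j^{\{k\}} : x_1\cdots x_d).
\]
So it suffices to compute each factor $(P_j^{\{k\}} : x_1\cdots x_d)$ and then to recognize the intersection.

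The key combinatorial input is that the simplicial assumption forces $\{x_1,\ldots,x_d\}$ to be a clique in $G$, so every minimal vertex cover $C_j$ satisfies $|C_j \cap \{x_1,\ldots,x_d\}|=d-1$: it must contain at least $d-1$ of these vertices (else a clique edge is uncovered), and it cannot contain all of them (otherwise $x_1$ would be redundant since its neighbors $x_2,\ldots,x_d$ already lie in the cover). Now $P_j^{\{k\}}$ is generated by all squarefree products of $k$ distinct variables from $C_j$, and the standard monomial colon formula yields that $(P_j^{\{k\}}:x_1\cdots x_d)$ is generated by $g/\gcd(g,x_1\cdots x_d)$ as $g$ ranges over these generators. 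Maximizing $\deg\gcd(g,x_1\cdots x_d)$ forces $g$ to use all $d-1$ variables of $C_j\cap\{x_1,\ldots,x_d\}$, leaving $k-d+1$ variables to be chosen freely from $C_j\setminus\{x_1,\ldots,x_d\}$. Consequently, for $k\le d-1$ the colon equals $R$ (matching the convention $I(G\setminus N_G[x_1])^{\{k-d+1\}}=R$), while for $k\ge d$,
\[
(P_j^{\{k\}}:x_1\cdots x_d) \;=\; Q_j^{\{k-d+1\}},
\]
where $Q_j$ is the prime generated by $C_j\setminus\{x_1,\ldots,x_d\}$, with the convention $Q_j^{\{k-d+1\}}=0$ if $|C_j\setminus\{x_1,\ldots,x_d\}|<k-d+1$.

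Finally, I would intersect over all $j$ and identify the result with $I(G\setminus N_G[x_1])^{\{k-d+1\}}$ via \Cref{sqfSymbolic}. Each $C_j\setminus\{x_1,\ldots,x_d\}$ is a vertex cover of $G\setminus N_G[x_1]$, and conversely any minimal vertex cover $C$ of $G\setminus N_G[x_1]$ extends to a minimal vertex cover of $G$ by adjoining $\{x_2,\ldots,x_d\}$. Moreover, non-minimal members of the family $\{Q_j\}$ are redundant in the intersection, since $Q_l\subseteq Q_j$ implies $Q_l^{\{k-d+1\}}\subseteq Q_j^{\{k-d+1\}}$. The main technical subtlety is precisely this verification that the minimal $Q_j$'s recover exactly the minimal primes of $I(G\setminus N_G[x_1])$; once that is in place, \Cref{sqfSymbolic} applied to $I(G\setminus N_G[x_1])$ immediately yields the claimed identity.
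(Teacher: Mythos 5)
Your argument is correct and complete: the reduction to $\bigcap_j (P_{C_j}^{\{k\}}:x_1\cdots x_d)$, the observation that a minimal vertex cover of $G$ must meet the clique $N_G[x_1]$ in exactly $d-1$ vertices (at least $d-1$ to cover the clique, and not all $d$ since $N_G(x_1)\subseteq C_j$ would make $x_1$ redundant), the resulting identity $(P_{C_j}^{\{k\}}:x_1\cdots x_d)=Q_j^{\{k-d+1\}}$, and the matching of the minimal $Q_j$'s with the minimal vertex covers of $G\setminus N_G[x_1]$ (via $C\mapsto C\cup\{x_2,\ldots,x_d\}$) all check out, including the degenerate cases $k\le d-1$ and the zero ideals. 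Note that the paper itself gives no proof of this statement--it is imported verbatim as \cite[Lemma 4.1]{FakhariSymbSq-free}--so there is nothing in-paper to compare against; your vertex-cover/squarefree-Veronese derivation is the natural self-contained route and is consistent with how such colon formulas are established in that reference.
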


\begin{lemma}\cite[Lemma 4.2]{FakhariSymbSq-free}\label{lem: symb sq-free 2}
    Let $G$ be a graph and suppose that $W = \{x_1,\ldots , x_d\}$ is a nonempty subset of vertices of $G$. Then for every integer $k \geq 1$,
    \[
    \reg(I(G)^{\{k\}}:x_1)\leq \max \{\reg (I(G\setminus U_1)^{\{k\}}: \mathbf{x}_{U_2})+|U_2|-1\mid x_1\in U_2, U_1\cap U_2=\emptyset, U_1\cup U_2=W\}.
    \]
\end{lemma}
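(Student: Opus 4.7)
The plan is to establish, by induction on $|W\setminus W'|$, the following slightly more general statement: for every subset $W'\subseteq W$ with $x_1\in W'$,
\[
\reg\bigl(I(G)^{\{k\}}:\mathbf{x}_{W'}\bigr)\;\le\;\max\Bigl\{\reg\bigl(I(G\setminus U_1)^{\{k\}}:\mathbf{x}_{U_2}\bigr)+|U_2|-|W'|\;\Big|\;W'\subseteq U_2,\;U_1\sqcup U_2=W\Bigr\}.
\]
The desired lemma is the case $W'=\{x_1\}$, and the base case $W'=W$ is immediate since the only admissible partition is $(U_1,U_2)=(\emptyset,W)$.

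For the inductive step, fix $x_d\in W\setminus W'$ and set $J:=I(G)^{\{k\}}:\mathbf{x}_{W'}$. Applying \Cref{regularity lemma}(iii) with $m=x_d$ gives
\[
\reg(J)\;\le\;\max\{\reg(J:x_d)+1,\;\reg(J+(x_d))\}.
\]
Because $J:x_d=I(G)^{\{k\}}:\mathbf{x}_{W'\cup\{x_d\}}$, the inductive hypothesis with $W'\cup\{x_d\}$ in place of $W'$ bounds $\reg(J:x_d)+1$ by a maximum over partitions of $W$ with $W'\cup\{x_d\}\subseteq U_2$, i.e., those with $x_d\in U_2$. For the second term, \Cref{reg sum} (together with $\reg((x_d))=1$) shows $\reg(J+(x_d))=\reg(\bar J)$, where $\bar J$ denotes the image of $J$ under the quotient map $R\to R/(x_d)$; applying the inductive hypothesis to the graph $G\setminus x_d$ with ambient set $W\setminus\{x_d\}$ and then re-indexing $U_1\mapsto U_1\cup\{x_d\}$ produces partitions of $W$ with $x_d\in U_1$. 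Taken together, these two cases exhaust all partitions of $W$ with $W'\subseteq U_2$, closing the induction.

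The main technical step is the identification $\bar J=I(G\setminus x_d)^{\{k\}}:\mathbf{x}_{W'}$ inside $R/(x_d)$. Since taking colon with a monomial free of $x_d$ commutes with reduction modulo $x_d$, this reduces to the identity $\overline{I(G)^{\{k\}}}=I(G\setminus x_d)^{\{k\}}$. Using the intersection formula $I(G)^{(k)}=\bigcap_C P_C^k$ over minimal vertex covers $C$ of $G$ together with \Cref{sqfSymbolic}, one must verify that for every squarefree monomial $u$ with $x_d\notin\supp(u)$, the condition $|\supp(u)\cap C|\ge k$ holds for all minimal vertex covers $C$ of $G$ if and only if it holds for all minimal vertex covers of $G\setminus x_d$. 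One direction follows since $C\setminus\{x_d\}$ contains a minimal cover of $G\setminus x_d$, and the other since any minimal cover $D$ of $G\setminus x_d$ satisfies $D\cup\{x_d\}\supseteq C$ for some minimal cover $C$ of $G$; in either event the count $|\supp(u)\cap\,\cdot\,|$ is preserved because $x_d\notin\supp(u)$. This combinatorial cover-manipulation is the most delicate piece of the argument, while the rest is a bookkeeping induction driven by \Cref{regularity lemma}.
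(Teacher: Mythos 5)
The paper offers no proof of this lemma; it is imported verbatim from \cite[Lemma 4.2]{FakhariSymbSq-free}, so there is no in-text argument to compare against. Your reconstruction is essentially correct and follows the route one would expect: strengthen the statement to a colon by $\x_{W'}$, induct on $|W\setminus W'|$, and at each step peel off a vertex $x_d\in W\setminus W'$ via \Cref{regularity lemma}(iii), sending the colon branch to the partitions with $x_d\in U_2$ and the sum branch to those with $x_d\in U_1$. The colon identity $J:x_d=I(G)^{\{k\}}:\x_{W'\cup\{x_d\}}$ and the re-indexing are fine, and your vertex-cover verification of $\overline{I(G)^{\{k\}}}=I(G\setminus x_d)^{\{k\}}$ is correct in both directions (the hypothesis $x_d\notin\supp(u)$ is genuinely needed only for the direction passing from covers of $G$ to covers of $G\setminus x_d$). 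Note, however, that this identity is exactly the restriction property the paper itself establishes: it is condition (c) of \Cref{def:sq-free power like} applied with $m=\x_{V(G)\setminus\{x_d\}}$, proved for squarefree symbolic powers in \Cref{Symbolic SPLF} via \Cref{lem:symbolic-restriction}. Citing that would shorten your most delicate paragraph to one line.

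One step needs a patch. The equality $\reg(J+(x_d))=\reg(\bar J)$ deduced from \Cref{reg sum} presupposes that $\bar J$ is a nonzero proper ideal. If every minimal generator of $J$ is divisible by $x_d$ (take $G$ a star centered at $x_d$, $k=1$, $W'=\{\ell_1\}$ a leaf: then $J=(x_d)$), one gets $J+(x_d)=(x_d)$ with regularity $1$ while $\reg(\bar J)=\reg(0)=0$, so the asserted equality fails. This does not sink the argument: in any inductive step $|W|-|W'|\ge 1$, and the partition $U_1=\emptyset$, $U_2=W$ already forces the right-hand side of your strengthened inequality to be at least $1$, so $\reg(J+(x_d))\le 1$ is still absorbed. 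But as written the proof silently asserts a false identity in this degenerate case; either dispose of $\bar J=0$ separately or replace the equality by the uniformly valid bound $\reg(J+(x_d))\le\max\{1,\reg(\bar J)\}$.
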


\subsection{Mixed sum of monomial ideals}

In this subsection, we consider the regularity of the mixed sum of a filtration of ideals in a polynomial ring. A sequence $\mathcal{I}=\{I_i\}_{i\geq 0}$ of ideals in a polynomial ring $A=\K[x_1,\dots, x_r]$ is said to be a {\it filtration} if it satisfies the following three conditions:
	\begin{enumerate}
		\item[(i)] $ I_0=A$;
		
		\item[(ii)] $ I_1$ is a non-zero proper ideal in $A$;
		
		\item[(iii)] $ I_i\supseteq I_{i+1}$ for all $i\ge 0$.
	\end{enumerate} 
	\color{black}
	
	Let $\mathcal{I}=\{I_i\}_{i\geq 0}$ (resp, $\mathcal{J}=\{J_j\}_{j\geq 0}$) be a filtration of monomial ideals of $A=\K[x_1,\dots, x_r]$ (resp, $B=\K[y_1,\dots, y_s]$). For each $n\geq 0$,  
\[
Q_n=\sum_{i+j=n}(I_iR)(J_jR),
\]
is an ideal of $R=A\otimes_\K B$.
The ideal $Q_n$ is sometimes referred to as a \emph{mixed sum} of $\mathcal{I}$ and $\mathcal{J}$. Homological invariants of $Q_n$ are computable directly from those of ideals of $\mathcal{I}$ and $\mathcal{J}$ in special cases.
Recall that $\mathcal{I}$ is called \emph{Tor-vanishing} if for each $i\geq 1$, the inclusion map $I_i\to I_{i-1}$ is Tor-vanishing, i.e., the map $\operatorname{Tor}_n^A(\K, I_i)\to \operatorname{Tor}_n^A(\K, I_{i-1})$ is the zero map for all $n\geq 0$. 

\begin{theorem}\protect{\cite[Theorem 5.3]{THT2020}}\label{thm:reg-Tor-filtration}
    If $\mathcal{I}$ and $\mathcal{J}$ are Tor-vanishing, then 
    \[
    \reg (Q_n) = \max_{\substack{i\in [1,n-1]\\
    j\in [1,n]}} \{ \reg (I_{n-i}) + \reg (J_i) , \quad  \reg (I_{n-j+1}) + \reg (J_j) - 1  \}
    \]
    for any $n\geq 0$.
\end{theorem}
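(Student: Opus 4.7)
The plan is to proceed by induction on an intermediate partial sum, exploiting the Tor-vanishing hypothesis at each step. For $0\le k\le n$ define
\[
P_n^{(k)} \;=\; \sum_{\substack{i+j=n\\ i\le k}} (I_iR)(J_jR),
\]
so that $P_n^{(0)}=J_nR$ and $P_n^{(n)}=Q_n$. The starting point is the short exact sequence
\[
0 \longrightarrow (I_kR)(J_{n-k+1}R) \xrightarrow{(\alpha,-\beta)} P_n^{(k-1)} \oplus (I_kR)(J_{n-k}R) \longrightarrow P_n^{(k)} \longrightarrow 0,
\]
where $\alpha,\beta$ are the natural inclusions and the last map is addition. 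Its exactness rests on the identity $(L_1R)(M_1R)\cap(L_2R)(M_2R)=((L_1\cap L_2)R)((M_1\cap M_2)R)$ for monomial ideals in disjoint variables, which, combined with the monotonicity of $\mathcal{I}$ and $\mathcal{J}$, yields $P_n^{(k-1)}\cap (I_kR)(J_{n-k}R)=(I_kR)(J_{n-k+1}R)$.

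The central step is to show, using the Tor-vanishing hypothesis, that $(\alpha_*,-\beta_*)$ vanishes on $\operatorname{Tor}^R(\K,-)$. Writing $(I_aR)(J_bR)=I_a\otimes_{\K}J_b$ as $R$-modules and applying the K\"unneth formula gives
\[
\operatorname{Tor}^R_p(\K,(I_aR)(J_bR)) \;=\; \bigoplus_{p_1+p_2=p} \operatorname{Tor}^A_{p_1}(\K,I_a)\otimes\operatorname{Tor}^B_{p_2}(\K,J_b).
\]
The map $\beta_*$ is induced by $J_{n-k+1}\hookrightarrow J_{n-k}$ (with identity on $I_k$), which is zero on $\operatorname{Tor}^B(\K,-)$ by Tor-vanishing of $\mathcal{J}$. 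The map $\alpha$ factors through the summand $(I_{k-1}R)(J_{n-k+1}R)$ of $P_n^{(k-1)}$, and the first inclusion $(I_kR)(J_{n-k+1}R)\hookrightarrow(I_{k-1}R)(J_{n-k+1}R)$ is zero on $\operatorname{Tor}^R(\K,-)$ by Tor-vanishing of $\mathcal{I}$.

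Consequently the long exact sequence of Tor degenerates into the short exact sequences
\[
0 \to \operatorname{Tor}^R_p(\K, P_n^{(k-1)})\oplus \operatorname{Tor}^R_p(\K,(I_kR)(J_{n-k}R)) \to \operatorname{Tor}^R_p(\K, P_n^{(k)}) \to \operatorname{Tor}^R_{p-1}(\K,(I_kR)(J_{n-k+1}R)) \to 0,
\]
which split as $\K$-vector spaces. Using $\reg((I_aR)(J_bR))=\reg(I_a)+\reg(J_b)$ (again K\"unneth), and noting that an upward shift of one in homological degree produces a downward shift of one in regularity contribution, one obtains the recursion
\[
\reg(P_n^{(k)}) \;=\; \max\{\reg(P_n^{(k-1)}),\ \reg(I_k)+\reg(J_{n-k}),\ \reg(I_k)+\reg(J_{n-k+1})-1\}.
\]
Iterating from the base $\reg(P_n^{(0)})=\reg(J_n)$ yields
\[
\reg(Q_n) \;=\; \max_{0\le k\le n}\bigl\{\reg(I_k)+\reg(J_{n-k}),\ \reg(I_k)+\reg(J_{n-k+1})-1\bigr\},
\]
and the stated formula follows after observing that the boundary terms $\reg(J_n)$ (at $k=0$) and $\reg(I_n)$ (at $k=n$) are absorbed by $\reg(I_1)+\reg(J_n)-1$ and $\reg(I_n)+\reg(J_1)-1$ respectively, using that $I_1,J_1$ are nonzero proper ideals and hence $\reg(I_1),\reg(J_1)\ge 1$.

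The main obstacle is verifying the vanishing of $\alpha_*$ cleanly: although the factorization through $(I_{k-1}R)(J_{n-k+1}R)$ is geometrically transparent, one must check that K\"unneth compatibility is preserved across all successive inclusions in the filtration, so that the zero Tor-map on a single tensor factor kills every bidegree of the product and the degeneration of the long exact sequence indeed holds in the graded sense.
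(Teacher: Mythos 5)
Your argument is essentially the paper's own: the paper (following \cite[Theorem 5.3]{THT2020}, and sketched again for \cref{thm:reg-Tor-filtration-2}) decomposes $Q_n$ into the same partial sums, identifies $P_{n,t-1}\cap I_{n-t}J_t=I_{n-t+1}J_t$, and invokes the Betti-splitting recursion $\reg P_{n,t}=\max\{\reg P_{n,t-1},\ \reg I_{n-t}+\reg J_t,\ \reg I_{n-t+1}+\reg J_t-1\}$, which is exactly what your K\"unneth/Tor-vanishing analysis re-derives from scratch; the boundary absorption via $\reg(I_1),\reg(J_1)\ge 1$ also matches. The only blemish is the final display's range $0\le k\le n$, which spuriously introduces $\reg(I_0)+\reg(J_{n+1})-1$ at $k=0$ even though your own recursion only produces the $-1$ terms for $k\ge 1$; restricting that term to $k\in[1,n]$ fixes it and gives the stated formula.
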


In the sequel, we often encounter filtration that is eventually $0$. For this reason, for a filtration of monomial ideal $\mathcal{I}=\{I_i\}_{\geq 0}$, we define
\[
\nu (\mathcal{I}) = \sup \{n\geq 0\colon I_n\neq 0 \}.
\]
With this invariant one can optimize \cref{thm:reg-Tor-filtration}, as for any $n\geq 0$, we have
\[
Q_n = \sum_{ \max \{ 0,n-\nu(\mathcal{I})   \}  \leq i\leq \min \{n,\nu(\mathcal{J})\}} I_{n-i} J_{i}.
\]
As the arguments are almost line-by-line as those of the proof of \cite[Theorem 5.3]{THT2020}, we state our result and give only a sketch of the proof.

\begin{theorem}\label{thm:reg-Tor-filtration-2}
	If $\mathcal{I}$ and $\mathcal{J}$ are Tor-vanishing, then
	\begin{align*}
		\reg (Q_n)  &=\begin{multlined}[t]
			\max_{\substack{i\in [\max\{0,n-\nu(\mathcal{I)}\},\ \min\{n,\nu(\mathcal{J})]\\j\in [\max\{0,n-\nu(\mathcal{I)}\}+1,\ \min\{n,\nu(\mathcal{J})]}}   \{\reg (I_{n-i}) + \reg (J_i) , \quad  \reg( I_{n-j+1}) + \reg (J_j) -1  \}
		\end{multlined}    \\
        &=\begin{multlined}[t]
			\max_{\substack{i\in [\max\{1,n-\nu(\mathcal{I})\},\ \min\{n-1,\nu(\mathcal{J})\} ]\\j\in [\max\{0,n-\nu(\mathcal{I)}\}+1,\ \min\{n,\nu(\mathcal{J})]}}   \{\reg (I_{n-i}) + \reg (J_i) , \quad  \reg( I_{n-j+1}) + \reg (J_j) -1  \}
		\end{multlined}    
	\end{align*}
	for any $n\in [0,\nu(\mathcal{I})+\nu(\mathcal{J})]$.
\end{theorem}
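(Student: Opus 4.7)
The plan is to adapt the argument of \cite[Theorem 5.3]{THT2020} to account for the eventual vanishing encoded by $\nu(\mathcal{I})$ and $\nu(\mathcal{J})$. The starting observation is that $I_i = 0$ for $i > \nu(\mathcal{I})$ and $J_j = 0$ for $j > \nu(\mathcal{J})$, so the mixed sum reduces to
\[
Q_n = \sum_{\max\{0,\,n-\nu(\mathcal{I})\}\,\le\, i\,\le\,\min\{n,\,\nu(\mathcal{J})\}} I_{n-i}R \cdot J_iR,
\]
and in particular $Q_n = 0$ whenever $n > \nu(\mathcal{I}) + \nu(\mathcal{J})$, which is consistent with the stated range $n \in [0,\nu(\mathcal{I}) + \nu(\mathcal{J})]$. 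The Tor-vanishing hypothesis is trivially preserved when some filtration terms vanish (the zero map is Tor-vanishing), so the mechanical steps of the original proof carry over to the truncated setting.

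The first step is to re-run the induction on $n$ from the proof of \cite[Theorem 5.3]{THT2020}, using the same short exact sequences that decompose $Q_n$ in terms of lower $Q_{n'}$ and the Tor-vanishing hypothesis to control their regularities, but at every stage restricting the index ranges to those appearing in the first formulation of the present statement. This restriction is genuine: in the unrestricted formula, a term of the form $\reg(I_{n-i}) + \reg(J_i)$ with $J_i = 0$ would contribute $\reg(I_{n-i})$ (by the convention $\reg(0) = 0$), but this value is not in general a lower bound for $\reg(Q_n)$, because the corresponding summand $I_{n-i}J_i = 0$ is absent from $Q_n$. The truncated ranges precisely exclude such spurious contributions.

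The second step is to derive the second formulation from the first. The only indices in the first range but not the second are the Type A endpoints $i = 0$ (when $\nu(\mathcal{I}) \ge n$) and $i = n$ (when $\nu(\mathcal{J}) \ge n$). At $i = 0$, the contribution is $\reg(I_n) + \reg(J_0) = \reg(I_n)$, which is dominated by the Type B term at $j = 1$, namely $\reg(I_n) + \reg(J_1) - 1 \ge \reg(I_n)$, since $J_1$ is a non-zero proper ideal and hence has $\reg(J_1) \ge 1$. A symmetric argument handles $i = n$, using $\reg(I_1) \ge 1$. Thus the two maxima agree.

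The main obstacle will be verifying that every step of the induction in \cite[Theorem 5.3]{THT2020} remains valid under the truncation, in particular that the exact sequences on which the argument rests do not force the inclusion of index values that fall outside the claimed ranges. Special care is needed at boundary values of $n$ near $\nu(\mathcal{I}) + \nu(\mathcal{J})$, where the index ranges can collapse and the induction degenerates, but since $Q_n = 0$ there the formula is automatic.
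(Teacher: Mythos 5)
Your proposal is correct and follows essentially the same route as the paper: both adapt the argument of \cite[Theorem 5.3]{THT2020} verbatim after observing that $Q_n$ collapses to the sum over $\max\{0,n-\nu(\mathcal{I})\}\le i\le\min\{n,\nu(\mathcal{J})\}$, and both obtain the second formula by discarding the endpoint terms $i=0$ and $i=n$, which are dominated by the $j=1$ and $j=n$ terms since $\reg(I_1),\reg(J_1)\ge 1$. The only cosmetic difference is that the paper phrases the adaptation as an induction on the inner summation index via Betti splittings $P_{n,t}=P_{n,t-1}+I_{n-t}J_t$ with $P_{n,t-1}\cap I_{n-t}J_t=I_{n-t+1}J_t$, rather than an induction on $n$, but this is exactly the mechanism of the cited proof you invoke.
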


\begin{proof}[Sketch of the proof]
    Set
    \[
	a\coloneqq\max\{0,n-\nu(\mathcal{I)}\}, \quad b= \min\{n,\nu(\mathcal{J})\}, \quad a'\coloneqq \max\{1,n-\nu(\mathcal{I})\}, \quad \text{and}\quad b'\coloneqq \min\{n-1,\nu(\mathcal{J})\}.  
	\]
    For each $t\in [a,b]$, set
    \[
    P_{n,t} = \sum_{ i =a }^t I_{n-i} J_{i}.
    \]
    Then $P_{n,t}=P_{n,t-1}+I_{n-t}J_{t}$ is a Betti splitting (cf. \cite[Definition~1.1]{FranciscoHaVanTuyl2009}), due to the fact that $\mathcal{I}$ and $\mathcal{J}$ are Tor-vanishing filtration of monomial ideals generated in distinct sets of variables, and $P_{n,t-1}\cap I_{n-t}J_{t}=I_{n-t+1}J_{t}$. Therefore, we have
    \[
    \reg P_{n,t} = \max\{ \reg P_{n,t-1}, \quad \reg I_{n-t} + \reg J_{t}, \quad \reg I_{n-t+1} + \reg J_{t}-1 \}.
    \]
    Using induction on $t$, remarking that $Q_n=P_{n,b}$, with the base case $\reg P_{n,a} =\reg (I_{n-a}J_{a}) = \reg I_{n-a}+ \reg J_{a}$, we obtain
    \begin{align*}
		\reg \ Q_n = \begin{multlined}[t]
			\max_{\substack{i\in [a,b]\\j\in [a+1,b]}}   \{\reg \ I_{n-i} + \reg \ J_i , \quad  \reg \ I_{n-j+1} + \reg \ J_j -1  \},
		\end{multlined}    
	\end{align*}
    which proves the first equality. There are potentially a few numbers in the above set that can be ignored without changing the maximum. We have
    \begin{align*}
        \reg \ I_0 + \reg \ J_n &= 0 + \reg \ J_n \leq \reg \ I_{1} + \reg\  J_n -1,\\
        \reg \ I_n + \reg \ J_0 &= \reg  \ I_n + 0 \leq \reg \ I_{n} + \reg\  J_1 -1
    \end{align*}
    as $I_1$ and $J_1$ are non-zero proper ideals. Thus, we can put new bounds on $i$ without changing the equality, as follows:
    \begin{align*}
        i&\geq \max \{a,1\} = \max\{0,n-\nu(\mathcal{I}),1\} = a',\\
        i&\leq \min \{ b,n-1 \} = \min \{n, \nu(\mathcal{J}), n-1\} = b'.
    \end{align*}
    The second equality then follows.
\end{proof}

We recall a sufficient condition for an inclusion of monomial ideals to be Tor-vanishing. For a monomial $m$, let $\supp(m)$, called the \emph{support} of $m$, denote the set of variables that divide $m$. For a monomial ideal $I$, let  $\partial^*(I)$ denote the monomial ideal generated by $m/x$, where $m\in \G(I)$ and $x\in \supp(m)$.

\begin{lemma}\protect{\cite[Lemma 4.2 and Proposition 4.4]{HV19}}\label{lem:del-condition}
    If $I\subseteq  I'$ are two nonzero monomial ideals such that $\partial^*(I)\subseteq I'$, then the inclusion $I\hookrightarrow I'$ is Tor-vanishing.
\end{lemma}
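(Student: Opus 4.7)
The plan is to exhibit an explicit chain-level lift $\phi_\bullet \colon T_\bullet(I) \to T_\bullet(I')$ of $\iota$ between Taylor resolutions and verify that $\phi_n\bigl(T_n(I)\bigr) \subseteq \m\, T_n(I')$ at every homological level. Since $\operatorname{Tor}^R_n(\K,-)$ may be computed via any free resolution, and a chain map with image in $\m$ becomes identically zero after applying $-\otimes_R \K$, this will immediately force every $\operatorname{Tor}^R_n(\K,\iota)$ to vanish.

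To build the lift, for each $m\in \G(I)$ I assign a distinguished $m'_m \in \G(I')$ that strictly divides $m$. This is where the hypothesis enters: choose any $x\in \supp(m)$ (which exists since $I\neq 0$); then $m/x\in I'$ by $\partial^*(I)\subseteq I'$, so one may take $m'_m \mid m/x$, whence $m'_m \mid m$ with quotient $m/m'_m \in \m$. I extend the level-zero map $e_m \mapsto (m/m'_m)\,e_{m'_m}$ to all Taylor basis elements via the standard formula
\[
\phi_n(e_\sigma) \;=\; \pm\,\frac{\lcm(\sigma)}{\lcm(\sigma')}\,e_{\sigma'}, \qquad \sigma' = \{m'_m \colon m \in \sigma\},
\]
with $\phi_n(e_\sigma)=0$ when $|\sigma'|<|\sigma|$. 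Once $\phi_\bullet$ is confirmed to be a chain map, the inclusion $\phi_n\bigl(T_n(I)\bigr)\subseteq \m\,T_n(I')$ is automatic: each coefficient $\lcm(\sigma)/\lcm(\sigma')$ is a positive-degree monomial because $m'_m \neq m$ for every $m\in \G(I)$.

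The main obstacle will be verifying that the above assignment actually defines a chain map. The subtle case is when two distinct $m_1, m_2 \in \G(I)$ happen to share the same designated generator $m'_{m_1}=m'_{m_2}$: for $\sigma\supseteq\{m_1,m_2\}$ the value $\phi_n(e_\sigma)$ vanishes by definition, and one must check that the contributions of $\phi_{n-1}\circ \partial$ arising from removing $m_1$ and $m_2$ cancel in pairs. This is a routine sign bookkeeping on the Taylor differential, after which the proof is complete.
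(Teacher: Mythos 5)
The paper does not actually prove this lemma --- it is imported verbatim from \cite{HV19} --- so there is no in-paper argument to compare against; I can only assess your construction on its own terms.

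Your framework is fine: a chain-level lift between (not necessarily minimal) free resolutions whose image lies in $\m$ times the target does kill every $\operatorname{Tor}_n^R(\K,\iota)$, and the collision case you single out as the main obstacle does work out --- when exactly two elements of $\sigma$ share the same image the two surviving terms of $\phi_{n-1}(\partial e_\sigma)$ carry the same monomial coefficient and opposite signs, and when the fibers are larger every term already vanishes. The genuine gap is in the step you declare automatic. It is false that $\lcm(\sigma)/\lcm(\sigma')$ has positive degree merely because each $m'_m$ properly divides $m$: the lcm of proper divisors need not properly divide the lcm. Concretely, take $I=(xy,yz,zx)$ and $I'=\partial^*(I)=(x,y,z)$, with the perfectly legitimate choices $m'_{xy}=x$, $m'_{yz}=y$, $m'_{zx}=z$ (each divides $m/x$ for a suitable variable $x\in\supp(m)$ and lies in $\G(I')$). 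For $\sigma=\G(I)$ the assignment is injective, yet $\lcm(\sigma)=\lcm(\sigma')=xyz$, so $\phi_2(e_\sigma)=\pm e_{\{x,y,z\}}$ has a unit coefficient and $\phi_2(T_2(I))\not\subseteq\m\,T_2(I')$. In this particular example $\operatorname{Tor}_2^R(\K,I)=0$, so the lemma is not contradicted, but your argument no longer delivers it, and nothing in your proof rules out the analogous failure in a homological degree where $\operatorname{Tor}$ is nonzero. To close the gap you would need to either show that the basis elements producing unit coefficients never contribute to $\operatorname{Tor}$, argue that the assignment $m\mapsto m'_m$ can always be chosen so that $\lcm(\sigma')$ properly divides $\lcm(\sigma)$ on every injective face (in the example above a non-injective choice does the job), or fall back on the actual argument of \cite{HV19}.
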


For a monomial ideal $I$, the ordinary powers $\{I^n\}_{n\geq 0}$ and symbolic powers $\{I^{(n)}\}_{n\geq 0}$ are among known examples that satisfy the condition in \cref{lem:del-condition} (\cite[Theorem~4.5]{HV19} and \cite[Proposition~5.10]{THT2020}), and thus in particular, are Tor-vanishing. We will show a new way to obtain Tor-vanishing filtration. For a monomial ideal $I$, let $\sqf(I)$ be the ideal generated by the squarefree monomials belonging to $I$; if $I$ does not contain any squarefree monomial, then $\sqf(I)=( 0 )$. By convention, $\sqf(R)=R$.

\begin{lemma}\label{lem:sqf-del-condition}
    Let $\{I_i\}_{i\geq 0}$ be a filtration of monomial ideal and $k\geq 1$ a positive integer. If $\partial^*(I_k)\subseteq I_{k-1}$, then $\partial^*(\sqf(I_k))\subseteq \sqf(I_{k-1})$.
\end{lemma}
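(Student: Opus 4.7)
The plan is to unpack the definitions of $\partial^*$ and $\sqf$, and reduce the statement to the hypothesis $\partial^*(I_k)\subseteq I_{k-1}$ via a short argument comparing minimal generators of $\sqf(I_k)$ with minimal generators of $I_k$.

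First, I would fix $m\in \mathcal{G}(\sqf(I_k))$ and $x\in \supp(m)$, and aim to show $m/x\in \sqf(I_{k-1})$. Note $m$ is squarefree by the definition of $\sqf$, so $m/x$ is automatically squarefree. Hence it suffices to establish the membership $m/x\in I_{k-1}$.

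The key observation I would record as a small auxiliary claim is: every minimal generator of $\sqf(I_k)$ is in fact a minimal generator of $I_k$. The proof is a one-liner: since $m\in I_k$, there exists $m'\in \mathcal{G}(I_k)$ dividing $m$; as $m$ is squarefree, so is $m'$, giving $m'\in \sqf(I_k)$; minimality of $m$ in $\sqf(I_k)$ forces $m=m'$, hence $m\in \mathcal{G}(I_k)$.

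With this claim in hand, the hypothesis $\partial^*(I_k)\subseteq I_{k-1}$ applies directly to $m\in \mathcal{G}(I_k)$ and $x\in \supp(m)$, yielding $m/x\in I_{k-1}$. Combined with the squarefreeness of $m/x$ noted above, this gives $m/x\in \sqf(I_{k-1})$, completing the proof. The whole argument is routine; the only mild subtlety is the auxiliary claim, which relies on the fact that divisors of squarefree monomials are squarefree.
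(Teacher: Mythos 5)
Your proof is correct and follows essentially the same route as the paper's: the paper writes $\partial^*(\sqf(I_k))\subseteq \partial^*(I_k)\subseteq I_{k-1}$ and then takes squarefree parts, while you unpack the first containment at the level of generators (via the observation that $\mathcal{G}(\sqf(I_k))\subseteq \mathcal{G}(I_k)$) and the last step via the remark that $m/x$ is squarefree. Your auxiliary claim is a clean way of justifying a containment the paper leaves implicit; nothing is missing.
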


\begin{proof}
    We have
    \[
    \partial^*(\sqf(I_k))\subseteq \partial^*(I_k)\subseteq I_{k-1}. 
    \]
    The result then follows from taking the squarefree part of both sides, remarking that $\sqf\left(\partial^*(\sqf(I_k))\right)=\partial^*(\sqf(I_k))$ as the latter is squarefree.
\end{proof}

Two filtration of monomial ideals that are known to satisfy the condition in \cref{lem:sqf-del-condition} are $\{I^n\}_{n\geq 0}$ and $\{I^{(n)}\}_{n\geq 0}$ where $I$ is a monomial ideal (\cite[Lemma 4.3]{HV19} and \cite[Proposition 5.10]{THT2020}). Now assume that $I$ is a squarefree monomial ideal. Then by taking the squarefree part, we obtain the two filtrations $\{I^{[n]}\}_{n\geq 0}$ and $\{I^{\{n\}}\}_{n\geq 0}$. Note that $\nu\left(\{I^{[n]}\}_{n\geq 0}\right) $ is exactly the matching number of the associated hypergraph, and $\nu\left(\{I^{\{n\}}\}_{n\geq 0}\right) $ is exactly $\height(I)$. 
The following  follows directly from \cref{lem:del-condition} and \cref{lem:sqf-del-condition}.

\begin{corollary}\label{cor:squarefree-powers-Tor-vanishing}
    Let $I$ be a squarefree monomial ideal. Then $\{I^{[n]}\}_{n\geq 0}$ and $\{I^{\{n\}}\}_{n\geq 0}$ are Tor-vanishing. 
\end{corollary}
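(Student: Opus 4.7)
The plan is to combine the two preceding lemmas with the known facts that the ordinary and symbolic power filtrations satisfy the delete-one containment.

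First, I would observe that by definition $I^{[n]} = \sqf(I^n)$ and $I^{\{n\}} = \sqf(I^{(n)})$, so the squarefree-power and squarefree-symbolic-power filtrations are exactly the squarefree parts of the filtrations $\{I^n\}_{n\geq 0}$ and $\{I^{(n)}\}_{n\geq 0}$. Since $I$ is squarefree, both sequences are filtrations in the sense of this subsection (both $I^{[1]}=I^{\{1\}}=I$, and each sequence is decreasing because the squarefree part of a decreasing sequence of monomial ideals is decreasing; the convention $\sqf(R)=R$ handles the $n=0$ term).

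Next, I would invoke the cited results \cite[Lemma 4.3]{HV19} and \cite[Proposition 5.10]{THT2020}, which state that $\partial^*(I^k)\subseteq I^{k-1}$ and $\partial^*(I^{(k)})\subseteq I^{(k-1)}$ for every $k\geq 1$. Applying \Cref{lem:sqf-del-condition} to each of these inclusions term by term yields
\[
\partial^*(I^{[k]}) \subseteq I^{[k-1]} \quad \text{and} \quad \partial^*(I^{\{k\}}) \subseteq I^{\{k-1\}}
\]
for every $k\geq 1$ such that the ideals involved are nonzero.

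Finally, \Cref{lem:del-condition} immediately converts each of these delete-one containments into the statement that the inclusion $I^{[k]}\hookrightarrow I^{[k-1]}$ (respectively $I^{\{k\}}\hookrightarrow I^{\{k-1\}}$) is Tor-vanishing. When $I^{[k]}=0$ (resp.\ $I^{\{k\}}=0$), the Tor-vanishing condition is trivial since the source of the induced map on $\operatorname{Tor}$ is zero. This establishes that both filtrations are Tor-vanishing. The only potential subtlety is checking that \Cref{lem:sqf-del-condition} really applies to $\{I^{(n)}\}_{n\geq 0}$ as a \emph{filtration} of monomial ideals in the formal sense defined above, but this is immediate from the defining properties of symbolic powers; no combinatorial or homological obstacle arises, as all the work has been packaged into the two auxiliary lemmas.
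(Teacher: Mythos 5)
Your proposal is correct and follows exactly the paper's route: the paper derives this corollary directly from \Cref{lem:del-condition} and \Cref{lem:sqf-del-condition} together with the known containments $\partial^*(I^k)\subseteq I^{k-1}$ and $\partial^*(I^{(k)})\subseteq I^{(k-1)}$ from the cited references. Your additional remark handling the case where the ideals become zero is a harmless (and welcome) extra detail.
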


Before going to the next section, we set up some notations and terminologies that we will use in the subsequent sections. For $A\subseteq \{x_1,\ldots,x_n\}$, $(A)$ denote the monomial ideal $(x\mid x\in A)$ in the polynomial ring $R=\mathbb K[x_1,\ldots,x_n]$. If $\p$ is a prime ideal in $R$ generated by monomials $\{x_i\mid x_i\in B\subseteq\{x_1,\ldots,x_n\}\}$, then we sometimes identify the set $B$ with the ideal $\p$, and write $\p$ as $\p_B$.

\section{Squarefree-power-like function, admissible set and regularity}\label{sec: main}

In this section, we define the notion of a squarefree-power-like function. The idea is to create a common framework for squarefree powers and squarefree symbolic powers of squarefree monomial ideals. The goal is to give a sharp combinatorial lower bound on their regularity.

For a monomial ideal $I$ and a monomial $m$, let $I^{\leq m}$ denote the monomial ideal generated by monomials in $I$ that divide $m$. This action is often referred to as \emph{restriction} (with respect to $m$). 

\begin{definition}[Squarefree-power-like function]\label{def:sq-free power like} Fix a set of variables $V$.
Let $\mathtt{F}$ be a function that maps a pair $(\H,k)$ of a hypergraph $\H$ with $V(\H)\subseteq V$ and a non-negative integer $k$ to a squarefree monomial ideal $\mathtt{F}(\mathcal{H},k)$ in $S=\K[V]$, satisfying the following property: 

\begin{enumerate}
\item[(a)] $\mathtt{F}(\H,0)=S$, $\mathtt{F}(\H,1)=I(\H)$, and $\mathtt{F}(\mathcal{H},k)$ is generated by monomials in $\K[V(\mathcal{H})]$.

\item[(b)] $\partial^{\ast}\,\mathtt{F}(\H,k)\subseteq \mathtt{F}(\H,k-1)$ for any positive integer $k$.


    \item[(c)] For any hypergraph $\H$, an induced sub-hypergraph $\H_1$ of $\H$, and any integer $k$, we have 
    \[
    \mathtt{F}(\H,k)^{\leq\x_{V(\H_1)}}  = \mathtt{F}(\H_1,k).
    \] 
    \item[(d)] For any hypergraphs $\H_1$ and $\H_2$ in disjoint sets of vertices, and any integer $k\geq 1$, we have \[
    \mathtt{F}(\H_1+ \H_2,k) = \sum_{i=0}^k \mathtt{F}(\H_1,i)\mathtt{F}(\H_2,k-i),
    \]
    where $\H_1+\H_2$ denotes the hypergraph whose vertex and edge sets are exactly the union of the vertex and edge sets of $\H_1$ and $\H_2$, respectively.
\end{enumerate} 
We call this function $\mathtt{F}$ a \textit{squarefree-power-like} function (on $V$). 
\end{definition}

 For the remainder of the section, $\mathtt{F}$ denotes a squarefree-power-like function. We remark that there are a few properties that can be further deduced from our definition. For a monomial ideal $I$, set $\delta(I)\coloneqq \inf\{\deg(m)\colon m\in I\}$. For a hypergraph $\mathcal{H}$, the sequence $\{\mathtt{F}(\H,i)\}_{i\geq 0}$ is a filtration of monomial ideals, and it eventually terminates at the zero ideal. Indeed, if $m$ is a monomial in $\mathtt{F}(\mathcal{H},n)$ for some integer $n$, then $m/x\in \partial^* \mathtt{F}(\mathcal{H},n)\subseteq \mathtt{F}(\mathcal{H},n-1)$. This implies that $\delta(\mathtt{F}(\mathcal{H},n))>\delta(\mathtt{F}(\mathcal{H},n-1))$ for any $n\geq 1$. Moreover, since $\mathtt{F}(\mathcal{H},n)$ is an ideal generated by squarefree monomials in $\K[V(\H)]$ (\cref{def:sq-free power like}(a)), we have $\delta(\mathtt{F}(\mathcal{H},n))\leq |V(\mathcal{H})|$ if $\mathtt{F}(\H,n)\neq 0$. Therefore, $\delta(\mathtt{F}(\mathcal{H},n))=\infty$, or equivalently, $\mathtt{F}(\mathcal{H},n)=0$,  for any $n> |V(\mathcal{H})|$. The smallest $k_0$ for which $\mathtt{F}(\H,k)=0$ for all $k>k_0$ is said to be the \emph{$\mathtt{F}$-number} of $\H$, and we will denote it by $\Fnumber(H)$. We obtain some basic results on $\mathtt{F}$-numbers below.

\begin{lemma}\label{lem:Fnumber-properties}
    \begin{enumerate}
        \item For any two hypergraphs $\H_1$ and $\H_2$ in disjoint set of vertices,
\[
    \Fnumber(\H_1\sqcup \H_2)=\Fnumber(\H_1)+\Fnumber(\H_2).
\]
        \item For any hypergraph $\H$, if $\mathtt{F}(\H,k)=\left(\prod_{x\in V(\H)} x \right)$, then $\Fnumber(\H)=k$.
    \end{enumerate}
\end{lemma}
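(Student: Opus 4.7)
The plan is to rely purely on properties (a), (b), and (d) of \Cref{def:sq-free power like}, together with the fact (already established in the preceding paragraph) that $\{\F(\H,i)\}_{i\geq 0}$ is a decreasing sequence of ideals.

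For part (1), set $k_1=\Fnumber(\H_1)$ and $k_2=\Fnumber(\H_2)$. I would first apply property (d) with $k=k_1+k_2$. Since $\H_1$ and $\H_2$ live on disjoint vertex sets, the summand $\F(\H_1,k_1)\cdot \F(\H_2,k_2)$ is a product of two nonzero ideals in disjoint sets of variables, hence nonzero. This shows $\F(\H_1\sqcup \H_2,k_1+k_2)\neq 0$, so $\Fnumber(\H_1\sqcup \H_2)\geq k_1+k_2$. For the reverse inequality, take any $k>k_1+k_2$ and any index $0\leq i\leq k$: if $i\leq k_1$ then $k-i\geq k-k_1>k_2$ so $\F(\H_2,k-i)=0$, and if $i>k_1$ then $\F(\H_1,i)=0$. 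Thus every summand in the expression of $\F(\H_1\sqcup \H_2,k)$ given by property (d) vanishes, giving $\F(\H_1\sqcup \H_2,k)=0$.

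For part (2), suppose $\F(\H,k)=(\x_{V(\H)})$. This ideal is nonzero, so $\Fnumber(\H)\geq k$. To rule out $\F(\H,k+1)\neq 0$, I would use property (b): any minimal generator $m$ of $\F(\H,k+1)$ is a squarefree monomial in $\K[V(\H)]$ by property (a), so $\deg(m)\leq |V(\H)|$. For any $x\in \supp(m)$ we would have $m/x\in \partial^*\F(\H,k+1)\subseteq \F(\H,k)=(\x_{V(\H)})$, forcing $\deg(m/x)\geq |V(\H)|$, which is incompatible with $\deg(m/x)\leq |V(\H)|-1$. Hence $\F(\H,k+1)=0$, and by the decreasing property $\F(\H,j)=0$ for all $j>k$, so $\Fnumber(\H)=k$.

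Neither step presents a real obstacle; the only point requiring care is verifying that the product $\F(\H_1,k_1)\cdot \F(\H_2,k_2)$ in part (1) is genuinely nonzero, which is immediate because the two factors are nonzero monomial ideals in disjoint variables, so no cancellation or unexpected reduction can occur.
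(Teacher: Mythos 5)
Your proposal is correct and follows essentially the same route as the paper: part (1) is exactly the "one nonzero summand / all summands vanish" reading of property (d), and part (2) is the paper's degree argument (the paper phrases it via the strict increase of the minimal generating degree $\delta$ established just before the lemma, which is precisely the $\deg(m/x)\geq|V(\H)|$ contradiction you spell out).
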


\begin{proof}
    (1) follows immediately from \Cref{def:sq-free power like}(d). As for (2), assume that $\mathtt{F}(\H,k+1)\neq 0$. Then 
    \[
    |V(\H)|=\delta(\mathtt{F}(\H,k))< \delta(\mathtt{F}(\H,k+1)) \leq |V(\H)|,
    \]
    a contradiction. Thus, $\mathtt{F}(\H,k+1)=0$. Since $\mathtt{F}(\H,k)\neq 0$, the result follows.
\end{proof}
    
The next result justifies our name choice.

\begin{proposition}\label{prop:coincide-sqf-power}
    If $\H$ is a disjoint union of edges, then $\F(\H,k)=I(\H)^{[k]}$ for any integer $k$.
\end{proposition}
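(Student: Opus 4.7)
My plan is to first compute $\F$ on a single edge, then assemble the case of $\H$ a disjoint union of edges via the multiplicativity axiom (d), and finally match the resulting ideal against the well-known description of $I(\H)^{[k]}$ for a complete intersection.

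For a hypergraph consisting of a single edge $\E=\{x_1,\ldots,x_d\}$, I claim $\F(\E,0)=S$, $\F(\E,1)=(\x_\E)$, and $\F(\E,k)=(0)$ for every $k\ge 2$. The first two assertions are axiom (a). For $k=2$: if $\F(\E,2)\neq(0)$, let $m$ be a minimal monomial generator; by axiom (b), $m/x\in\F(\E,1)=(\x_\E)$ for every $x\in\supp(m)$, forcing $\x_\E\mid m/x$ and hence $\deg(m)\ge d+1$, while axiom (a) constrains $m$ to be a squarefree monomial in $\K[x_1,\ldots,x_d]$, giving $\deg(m)\le d$; a contradiction. The sequence $\{\F(\E,i)\}_{i\ge 0}$ is decreasing (as already observed in the excerpt, since $\F(\E,1)$ is a nonzero proper ideal), so once $\F(\E,2)=(0)$ it follows that $\F(\E,k)=(0)$ for all $k\ge 2$. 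I expect this to be the only step with genuine content.

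Next, take $\H=\E_1\sqcup\cdots\sqcup\E_n$ on pairwise disjoint vertex sets. Iterating axiom (d) gives
\[
\F(\H,k)=\sum_{i_1+\cdots+i_n=k}\F(\E_1,i_1)\cdots\F(\E_n,i_n),
\]
and by the previous paragraph a summand vanishes unless every $i_j\in\{0,1\}$; the surviving summands are exactly $\prod_{j\in T}(\x_{\E_j})$ where $T=\{j:i_j=1\}$ ranges over $k$-subsets of $[n]$. Hence
\[
\F(\H,k)=\bigl(\x_{\E_{j_1}}\cdots\x_{\E_{j_k}} : 1\le j_1<\cdots<j_k\le n\bigr).
\]
On the other hand $I(\H)=(\x_{\E_1},\ldots,\x_{\E_n})$ is a complete intersection, so every minimal generator of $I(\H)^k$ has the form $\prod_j\x_{\E_j}^{a_j}$ with $\sum_j a_j=k$; by vertex-disjointness of the $\E_j$, such a generator is squarefree iff every $a_j\in\{0,1\}$. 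Therefore $I(\H)^{[k]}$ is generated by exactly the same list of monomials, and we conclude $\F(\H,k)=I(\H)^{[k]}$.
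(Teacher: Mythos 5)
Your proof is correct and follows essentially the same route as the paper's: both first use axioms (a) and (b) to show that a single edge $\E$ has $\F(\E,1)=(\x_\E)$ and $\F(\E,k)=0$ for $k\ge 2$, and then assemble the disjoint union via axiom (d). The only cosmetic difference is that the paper runs an induction on the number of edges using the recursion $I^{[k]}=I'^{[k]}+I'^{[k-1]}I(\H_q)$, whereas you expand axiom (d) into a closed multinomial sum and match generators against the complete-intersection description of $I(\H)^{[k]}$ directly.
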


\begin{proof}
    Set $\H=\H_1+\cdots+\H_q$ where the hypergraphs $\H_1,\dots, \H_q$ have one edge each. We will prove our statement by induction on $q$.
    
    If $q=1$ then by \Cref{def:sq-free power like}(b) and \cref{lem:Fnumber-properties} (2), $\F(\H,k)$ is nonzero if and only if $k\leq 1$. The result then follows from \cref{def:sq-free power like} (a). Now assume $q\geq 2$. By induction, we can assume that \[
    \F(\H_1+\cdots+\H_{q'},k)=I(\H_1+\cdots+\H_{q'})^{[k]}\]
    whenever $q'<q$. By \cref{lem:Fnumber-properties} (2) and \cref{def:sq-free power like} (a), we have $\Fnumber(\H_i)=1$ for any $i\in [q]$. Using \cref{def:sq-free power like} (d), we obtain
    \begin{align*}
        \F(\H,k) &= \sum_{i=0}^k \F(\H_1+\cdots + \H_{q-1},i)\F(\H_q,k-i) \\
        &= I(\H_1+\cdots + \H_{q-1})^{[k]} + I(\H_1+\cdots + \H_{q-1})^{[k-1]}I(\H_q)\\
        &= I(\H_1+\cdots + \H_{q})^{[k]},
    \end{align*}
    as desired.
\end{proof}

Next, we introduce an invariant that will help us bound the regularity of $\mathtt{F}(\H,k)$.

\begin{definition}\label{def:aim-F}
    Let  $\H$ be a hypergraph. Then for any $1\leq k\leq \Fnumber(\H)$, a vertex set $C\subseteq V(\H)$ is called a \emph{$k$-admissible $\mathtt{F}$-set} of $\H$ if there is a partition $C=\sqcup_{i=1}^rC_i$ such that the following holds:
    \begin{enumerate}
        \item for any $i\in [r]$, we have $E(\H[C_i])\neq \emptyset$;
        \item for any edge $\E$ of $\H[C]$, we have $\E\in E(\H[C_i])$ for some $i\in [r]$;
        \item $k\leq \sum_{i=1}^r \Fnumber(\H[C_i]) \leq r+k-1$;
        \item $\mathtt{F}(\H[C_i], \Fnumber(\H[C_i]))$ equals the principal ideal $\left(\x_{C_i} \right)$ for any $i\in [r]$.
    \end{enumerate}

    For each such $C$, we compute the value of $|C|-\sum_{i=1}^r \Fnumber(\H[C_i])$. We call the largest value among those the \emph{$k$-admissible $\mathtt{F}$-number} of $\H$, and denote it by $\adm^{\mathtt{F}}(\H,k)$.
\end{definition}

\begin{proposition}\label{prop:existence}
    Let  $\H$ be a hypergraph. Then for any $1\leq k\leq \Fnumber(\H)$, there exists a $k$-admissible $\mathtt{F}$-set of $\H$. Consequently, $\adm^{\mathtt{F}}(\H,k)$ is well-defined for any hypergraph $\H$ and any integer $1\leq k\leq \Fnumber(\H)$.
\end{proposition}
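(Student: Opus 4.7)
The plan is to exhibit a $k$-admissible $\F$-set with $r=1$, i.e., a single block. Since $k\leq \Fnumber(\H)$ gives $\F(\H,k)\neq 0$, and every minimal generator of $\F(\H,k)$ is a squarefree monomial in $\K[V(\H)]$ by \Cref{def:sq-free power like}(a), such a generator has the form $\x_D$ for some $D\subseteq V(\H)$. I select $D$ of \emph{minimum cardinality} subject to $\x_D\in \F(\H,k)$, and set $C_1:=D$, $r:=1$.

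My first claim is $\F(\H[D],k)=(\x_D)$. By property (c), $\F(\H[D],k)=\F(\H,k)^{\leq \x_D}$ is a subideal of $\F(\H,k)$, so every generator $\x_{D'}$ of $\F(\H[D],k)$ automatically lies in $\F(\H,k)$ with $D'\subseteq D$; minimality of $|D|$ then forces $D'=D$, and the claim follows since $\x_D\in \F(\H[D],k)$ by (c).

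The second claim is $\Fnumber(\H[D])=k$. The lower bound $\Fnumber(\H[D])\geq k$ is immediate from $\F(\H[D],k)\neq 0$. For the reverse, suppose towards a contradiction $\F(\H[D],k+1)\neq 0$ and pick a generator $\x_{D''}\in \F(\H[D],k+1)\subseteq \F(\H[D],k)=(\x_D)$; the containment forces $D''=D$, i.e., $\x_D\in \F(\H[D],k+1)$. Applying property (b) to $\H[D]$, I then obtain $\x_{D\setminus x}\in \partial^*\F(\H[D],k+1)\subseteq \F(\H[D],k)=(\x_D)$ for every $x\in D$, which is absurd because $\x_{D\setminus x}$ is not divisible by $\x_D$.

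With both claims in hand, the four conditions of \Cref{def:aim-F} are routine: (1) $\Fnumber(\H[D])=k\geq 1$ gives $I(\H[D])=\F(\H[D],1)\neq 0$, so $\H[D]$ carries an edge; (2) is vacuous for $r=1$; (3) collapses to the trivial $k\leq k\leq k$; and (4) is precisely the principality established above. The only genuinely delicate step is the use of property (b) to rule out $\x_D\in \F(\H[D],k+1)$, which is the linchpin that pins $\Fnumber(\H[D])$ to exactly $k$; everything else is bookkeeping around the minimality of $|D|$.
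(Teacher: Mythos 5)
Your proof is correct and follows essentially the same route as the paper: take (the support of) a minimal generator of $\mathtt{F}(\H,k)$ as a single-block admissible set, use property (c) to get $\mathtt{F}(\H[D],k)=(\x_D)$, and deduce $\Fnumber(\H[D])=k$. Your inline argument via property (b) that $\x_D\notin\mathtt{F}(\H[D],k+1)$ is just a reproof of \cref{lem:Fnumber-properties}(2), which the paper invokes at that point.
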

\begin{proof}
    As $1\leq k\leq \nu_\mathtt{F}(\H)$, the ideal $\mathtt{F}(\H,k)$ is a nonzero proper squarefree monomial ideal. Consider a minimal squarefree generator $m$ of $\mathtt{F}(\H,k)$, and set $C\coloneq \supp(m)$. Using \Cref{def:sq-free power like}(a),(b), we see that conditions (1)-(2) of Definition~\ref{def:aim-F} are satisfied for the partition $C=C_1$. Moreover, by \Cref{def:sq-free power like}(c),
    \[
    \mathtt{F}(\H[C],k) = \mathtt{F}(\H,k)^{\leq m} = (m) = \left(\prod_{x\in \supp(m)}x\right).\]
    In particular, this implies that Condition (4) of Definition~\ref{def:aim-F} holds, and that $\Fnumber(\H[C])=k$, which implies Condition (3) of Definition~\ref{def:aim-F}. To sum up, $C=C_1$ is indeed a $k$-admissible $\mathtt{F}$-set of $\H$.
\end{proof}

Assume that $\H$ is a hypergraph, $1\leq k\leq \Fnumber(\H)$ an integer, and $C\subseteq V(\H)$ a $k$-admissable $\mathtt{F}$-set of $\H$. By \cref{def:sq-free power like} (c), we have $\mathtt{F}(\H,k)^{\leq \x_C} = \mathtt{F}(\H[C],k)$. By the Restriction Lemma (\cite[Lemma~4.4]{HHZ04}), $\reg\, \mathtt{F}(\H[C],k)$ is a lower bound of $\reg\, \mathtt{F}(\H,k)$. Computationally, even with the many conditions on $C$,  the number $\reg\, \mathtt{F}(\H[C],k)$ is still not easy to determine. The number  $|C|-\sum_{i=1}^r \Fnumber(\H[C_i])$, on the other hand, is easier to compute, and it turns out that this number is a lower bound for $\reg\, \mathtt{F}(\H[C],k)$. This is the main result of this section. Before proving it, we need a weaker version of \cref{thm:reg-Tor-filtration-2}.

\begin{theorem}\label{thm:refined-betti-splittings}
    Given two hypergraphs $\H_1$ and $\H_2$ with disjoint vertex sets, and a positive integer $\Fnumber(\H_1)\leq k\leq \nu_\mathtt{F}(\H_1+\H_2)$, we have
    \begin{align*}
        \reg \, \mathtt{F}(\H_1+\H_2,k) \geq \begin{multlined}[t]
             \max \{  \reg\, \mathtt{F}(\H_1,\Fnumber(\H_1)) + \reg\, \mathtt{F}(\H_2,k-\Fnumber(\H_1)), \\
             \ 
             \reg\, \mathtt{F}(\H_1,\Fnumber(\H_1)) + \reg\, \mathtt{F}(\H_2,k+1-\Fnumber(\H_1)) -1  \}.
        \end{multlined}
    \end{align*}
\end{theorem}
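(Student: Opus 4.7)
The plan is to deduce this bound as a special case of \cref{thm:reg-Tor-filtration-2} applied to the natural pair of filtrations associated with $\H_1$ and $\H_2$, by specializing the max in that theorem to two well-chosen values of the summation index.

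First I would set $a\coloneqq \Fnumber(\H_1)$ and $b\coloneqq \Fnumber(\H_2)$, noting that $\Fnumber(\H_1+\H_2)=a+b$ by \cref{lem:Fnumber-properties}(1). Property (d) of \cref{def:sq-free power like} then expresses
\[
\F(\H_1+\H_2,k)=\sum_{i=0}^{k}\F(\H_1,i)\,\F(\H_2,k-i),
\]
which identifies $\F(\H_1+\H_2,k)$ with the mixed sum $Q_k$ of the filtrations $\mathcal{I}=\{\F(\H_1,i)\}_{i\geq 0}$ and $\mathcal{J}=\{\F(\H_2,j)\}_{j\geq 0}$. Both filtrations are Tor-vanishing by \cref{lem:del-condition}, since property (b) of \cref{def:sq-free power like} provides exactly the containment $\partial^{\ast}\F(\H,i)\subseteq \F(\H,i-1)$ required by that lemma. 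Moreover, by the definition of $\F$-number, $\nu(\mathcal{I})=a$ and $\nu(\mathcal{J})=b$.

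Next, I would invoke \cref{thm:reg-Tor-filtration-2} with $n=k$. Since $a\leq k\leq a+b$, one has $\max\{0,k-a\}=k-a\leq \min\{k,b\}$, so $i=k-a$ is an admissible index in the ``first-type'' range of that theorem, producing
\[
\reg\,\F(\H_1+\H_2,k)\geq \reg\,\F(\H_1,a)+\reg\,\F(\H_2,k-a).
\]
When $k\leq a+b-1$, the value $j=k-a+1$ similarly lies in the ``second-type'' range, giving
\[
\reg\,\F(\H_1+\H_2,k)\geq \reg\,\F(\H_1,a)+\reg\,\F(\H_2,k-a+1)-1.
\]
Taking the maximum of these two inequalities yields the claim.

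Finally, I would dispose of the boundary case $k=a+b$: there $\F(\H_2,b+1)=0$, so the second expression in the claimed maximum collapses to $\reg\,\F(\H_1,a)-1$, which is dominated by the first and the inequality still holds trivially. I do not anticipate any genuine obstacle here; the substantive content is already encoded in \cref{thm:reg-Tor-filtration-2}, and the only care needed is the routine index bookkeeping to match the two chosen values of $i$ and $j$ with the prescribed intervals.
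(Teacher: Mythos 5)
Your proposal is correct and follows essentially the same route as the paper's proof: both reduce to \cref{thm:reg-Tor-filtration-2} via the Tor-vanishing of the two filtrations (from \cref{def:sq-free power like}(b) and \cref{lem:del-condition}), extract the two bounds by choosing $i=k-\Fnumber(\H_1)$ and $j=k-\Fnumber(\H_1)+1$, and dispose of the boundary case $k=\Fnumber(\H_1)+\Fnumber(\H_2)$ by noting that $\mathtt{F}(\H_2,k+1-\Fnumber(\H_1))=0$ there, so the second term is dominated by the first.
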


\begin{proof}
    Due to \cref{def:sq-free power like}(b) and \cref{lem:del-condition}, the filtration $\{\mathtt{F}(\H_1,i)\}_{i\geq 0}$ and $\{\mathtt{F}(\H_2,j)\}_{j\geq 0}$ are Tor-vanishing. Thus we can apply \cref{thm:reg-Tor-filtration-2}: 
    \begin{align*}
         \reg \, \mathtt{F}(\H_1+\H_2,k) = \begin{multlined}[t]
            \max_{\substack{i\in [a,b]\\j\in [a+1,b]}}  \{ \reg \, \mathtt{F}(\H_1,k-i) + \reg \, \mathtt{F}(\H_2,i) , \\
            \reg \, \mathtt{F}(\H_1,k-j+1) + \reg \, \mathtt{F}(\H_2,j) -1  \}
        \end{multlined}    
    \end{align*}
    for any $k\geq 0$, where 
    \[
    a=\max\{0,k-\Fnumber(\H_1)\} \quad \text{and} \quad b= \min\{k,\Fnumber(\H_2)\}.  
    \]
    Note that $\Fnumber(\H_1+\H_2)= \Fnumber(\H_1)+\Fnumber(\H_2)$. We thus can pick $i=k-\Fnumber(\H_1)\in [a,b]$ and obtain
    \begin{equation}\label{eq:reg-1}
        \reg \, \mathtt{F}(\H_1+\H_2,k) \geq  \reg\, \mathtt{F}(\H_1,\Fnumber(\H_1)) + \reg\, \mathtt{F}(\H_2,k-\Fnumber(\H_1)).
    \end{equation}
    It now suffices to show that
    \begin{equation}\label{eq:reg-2}
        \reg \, \mathtt{F}(\H_1+\H_2,k) \geq \reg\, \mathtt{F}(\H_1,\Fnumber(\H_1)) + \reg\, \mathtt{F}(\H_2,k+1-\Fnumber(\H_1)) -1.
    \end{equation}
    Indeed, if $k=\Fnumber(\H_1+\H_2)= \Fnumber (\H_1)+ \Fnumber(\H_2)$, then $\mathtt{F}(\H_2,k+1-\Fnumber(\H_1))=0$, and (\cref{eq:reg-2}) follows from (\cref{eq:reg-1}). Now we can assume that $k\leq \Fnumber (\H_1)+ \Fnumber(\H_2)-1$. We can now pick $j=k+1-\Fnumber(\H_1)\in [a+1,b]$, and obtain (\cref{eq:reg-2}), as desired.
\end{proof}

We are now ready to prove the main result of this section.

\begin{theorem}\label{thm:lower-bound}
    Let $\H$ be a hypergraph and $k$ an integer where $1\leq k\leq \Fnumber(\H)$. Then for any $k$-admissible $\mathtt{F}$-set $C=\sqcup_{i=1}^r C_i$ of $\H$, we have
    \[
    \reg \left( \mathtt{F}(\H,k) \right) \geq |C|-\sum_{i=1}^r \Fnumber(\H[C_i])+k.
    \]
    Consequently, we have
    \[
    \reg \left( \mathtt{F}(\H,k) \right) \geq \adm^{\mathtt{F}}(\H,k)+k.
    \]
\end{theorem}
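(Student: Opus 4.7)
The plan is to combine the Restriction Lemma with an inductive application of \Cref{thm:refined-betti-splittings}. First, since $\mathtt{F}(\H,k)^{\leq \x_C} = \mathtt{F}(\H[C],k)$ by \Cref{def:sq-free power like}(c), the Restriction Lemma \cite[Lemma~4.4]{HHZ04} gives $\reg\,\mathtt{F}(\H,k) \ge \reg\,\mathtt{F}(\H[C],k)$, so it suffices to bound the latter. Conditions (1) and (2) of \Cref{def:aim-F} ensure that $\H[C]$ decomposes as the vertex-disjoint, edge-disjoint union $\H[C_1] + \cdots + \H[C_r]$. Write $n_i = \Fnumber(\H[C_i])$, $N = \sum_i n_i$, and $\Delta = N - k$; condition (3) of \Cref{def:aim-F} says $0 \le \Delta \le r - 1$.

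I would handle the case $\Delta = 0$ (so $k = N$) by direct computation. Iterating \Cref{def:sq-free power like}(d), any nonzero term in the expansion of $\mathtt{F}(\H[C], N)$ requires exponents $j_i \le n_i$ summing to $N = \sum_i n_i$, forcing $j_i = n_i$, so together with condition (4) of \Cref{def:aim-F} we obtain $\mathtt{F}(\H[C], N) = \prod_i \mathtt{F}(\H[C_i], n_i) = \prod_i (\x_{C_i}) = (\x_C)$, hence $\reg\,\mathtt{F}(\H[C],k) = |C| = |C| - N + k$.

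For $\Delta \ge 1$ I would induct on $r$. A simple counting argument shows some $n_j \le k$: otherwise $N \ge r(k+1) > k + r - 1 \ge N$, a contradiction. Relabel so $n_r \le k$, and apply \Cref{thm:refined-betti-splittings} with $\H_1 = \H[C_r]$ and $\H_2 = \H[C_1] + \cdots + \H[C_{r-1}]$; the hypothesis $n_r \le k \le N$ is satisfied. The critical step is to use the second (``$-1$'') summand of the max, at $j = k+1-n_r$, yielding
\[
\reg\,\mathtt{F}(\H[C],k) \ \ge\ \reg\,\mathtt{F}(\H[C_r], n_r) + \reg\,\mathtt{F}(\H_2, k+1-n_r) - 1.
\]
Condition (4) of \Cref{def:aim-F} gives $\reg\,\mathtt{F}(\H[C_r], n_r) = |C_r|$, and $C_1 \cup \cdots \cup C_{r-1}$ is a $(k+1-n_r)$-admissible $\mathtt{F}$-set of $\H_2$ with the inherited partition: conditions (1), (2), (4) transfer verbatim, while condition (3) becomes $k+1-n_r \le N - n_r \le (r-1) + (k+1-n_r) - 1$, equivalent to $\Delta \ge 1$ and $N \le r+k-1$, both of which hold. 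Since the new excess drops to $\Delta - 1$, the inductive hypothesis (or the base case when $\Delta = 1$) gives $\reg\,\mathtt{F}(\H_2, k+1-n_r) \ge |C| - |C_r| - N + k + 1$, and combining produces the desired bound $|C| - N + k$. Finally, $\reg\,\mathtt{F}(\H,k) \ge \adm^{\mathtt{F}}(\H,k) + k$ follows by taking the maximum over all $k$-admissible $\mathtt{F}$-sets, whose existence is ensured by \Cref{prop:existence}.

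The main obstacle is choosing the correct summand in \Cref{thm:refined-betti-splittings}. The naïve choice $k' = k - n_r$ from the first summand would require $\Delta \le r-2$ when transferring $k'$-admissibility to $\H_2$, and so fails in the extremal case $\Delta = r-1$. Using instead $k' = k+1-n_r$ via the second summand costs a $-1$ in regularity, but this is exactly compensated by the fact that the excess decreases by $1$, allowing the induction to close uniformly across all values of $\Delta$.
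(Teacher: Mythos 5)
Your proof is correct, and it rests on the same two pillars as the paper's: the Restriction Lemma to pass to $\H[C]$, and \Cref{thm:refined-betti-splittings} to peel off one part of the partition. The difference is purely in how the induction is organized, and your version is arguably tighter. The paper inducts on $k$ and $|E(\H)|$, with base cases $k=1$ and $|E(\H)|=1$, and in the inductive step distinguishes $\sum_i \Fnumber(\H[C_i]) < r+k-1$ (where it uses the first summand of the max, passing to a $(k-a_1)$-admissible set) from $\sum_i \Fnumber(\H[C_i]) = r+k-1$ (where it uses the second, ``$-1$'' summand). You instead take the excess $\Delta = \sum_i \Fnumber(\H[C_i]) - k$ as the governing parameter: when $\Delta = 0$ you show directly, by iterating \Cref{def:sq-free power like}(d), that $\mathtt{F}(\H[C],k)$ is the principal ideal $(\x_C)$ (the paper only does this for $r=1$, where it is forced), and when $\Delta \ge 1$ you use the second summand uniformly, trading the $-1$ in regularity for a drop of $1$ in the excess. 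All the verifications check out: $k+1-n_r \le \Fnumber(\H_2)$ is exactly $\Delta\ge 1$, the upper bound in condition (3) for the smaller set is exactly $\sum_i \Fnumber(\H[C_i]) \le r+k-1$, and the arithmetic closes. One small simplification you could make: the counting argument and relabeling to get some $n_j \le k$ are unnecessary, since condition (3) together with $\Fnumber(\H[C_i]) \ge 1$ already forces $\Fnumber(\H[C_j]) \le k$ for \emph{every} $j$ (this is the inequality $k - a_1 \ge \sum_{i\ge 2} a_i - (r-1) \ge 0$ that the paper records), so you may split off any part you like.
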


\begin{proof}
    We induct on $k$ and $|E(\mathcal{H})|$. If $k=1$, then we have $\Fnumber(\H[C_i])=1$ for any $i\in [r]$ by \cref{def:aim-F} (1) and (3). By \cref{def:aim-F} (4), we have $\reg(\mathtt{F}(\H[C_i]),1) = |C_i|$ for any $i\in [r]$. By \cref{def:aim-F} (2), \Cref{reg sum}, and the Restriction Lemma (\cite[Lemma~4.4]{HHZ04}), we have
    \[
    \reg \left( \mathtt{F}(\H,1) \right)\geq \reg \left( \mathtt{F}(\H[C],1) \right) =  \reg \left( \sum_{i=1}^r \mathtt{F}(\H[C_i],1) \right) = \sum_{i=1}^r|C_i| -r+1,
    \]
    as desired. If $|E(\H)|=1$, then we must have $r=1$ due to \cref{def:aim-F} (1). By \cref{def:aim-F} (3), we have $k\leq \Fnumber(\H[C_1]) \leq k$, and thus $\Fnumber(\H[C_1])=k$. Using \cref{def:aim-F} (4), we have
    \[
    \reg \left( \mathtt{F}(\H,k \right)\geq \reg \left( \mathtt{F}(\H[C_1], k ) \right) =   |C_1| = \left(|C| -\Fnumber(\H[C]) \right)+k,
    \]
    as desired.
    
    By induction, we can assume that $k\geq 2$ and whenever $k'<k$ or $|E(\H')|< |E(\H)|$, we have
    \[
    \reg \left( I(\mathcal{H}')^{\{k'\}} \right) \geq |C'|-\sum_{i=1}^{r'} \Fnumber(\H[C_i'])+k, 
    \]
    where $C'=\sqcup_{i=1}^{r'} C_i'$ is any $k'$-admissible $\mathtt{F}$-set of $\H'$. Now assume that $C=\sqcup_{i=1}^r C_i$ is a $k$-admissible $\mathtt{F}$-set  of $\H$ where the conditions in \cref{def:aim-F} hold. Set $a=|C|-\sum_{i=1}^r \Fnumber(\H[C_i])$, and $a_i=\Fnumber(\H[C_i])$ for each $i\in [r]$. By  the Restriction Lemma (\cite[Lemma~4.4]{HHZ04}), it suffices to show that
    \begin{equation}
        \reg\left( \mathtt{F}(\H[C],k) \right) \geq a+k.
    \end{equation}

    Suppose that $r=1$. Then $k\leq \Fnumber(\H[C])=a_1\leq k$, and thus $\Fnumber(\H[C])=a_1=k$. By \cref{def:aim-F} (4), the ideal $\mathtt{F}(\H[C],k)$ is generated by a monomial of degree $|C|$, and thus we have
    \[
    \reg\left( \mathtt{F}(\H[C],k)  \right) = |C|= \left(|C| - \Fnumber(\H[C])\right)+ \Fnumber(\H[C]) = a+k,
    \]
    as desired. Now we can assume that $r\geq 2$.  Due to condition (2) of \cref{def:aim-F}, the graph $\mathcal{H}[C]$ is exactly the disjoint union of the hypergraphs $\H[C_i]$ where $i$ ranges in $[r]$. Set $\H_1\coloneqq \H[C_1]$, and $\H_2=\sqcup_{i=2}^r \H[C_i]$. Then $\H_1$ and $\H_2$ share no vertex. Moreover, we have 
    \[
    k-\Fnumber(\H_1)=k-\Fnumber(\H[C_1])=k-a_1\geq \sum_{i=2}^r a_i - (r-1)\geq 0
    \]
    and 
    \[k\leq a_1+ \sum_{i=2}^ra_i=\Fnumber(\H_1)+\Fnumber(\H_2),
    \]
    by conditions (2) and (3) of \cref{def:aim-F}.  
    Thus, by \cref{thm:refined-betti-splittings},  we have
    \begin{equation}\label{equ:regularity-betti-F}
        \reg \left( \mathtt{F}(\H[C],k) \right) \geq \max \{ |C_1| + \reg \big( \mathtt{F}(\H_2,k-a_1) \big), \quad |C_1| + \reg \big( \mathtt{F}(\H_2,k-a_1+1) \big) -1  \},
    \end{equation}
    where we used the fact that $\reg \left( \mathtt{F}(\H_1,a_1) \right)$ is the principal ideal generated by a monomial of degree $|C_1|$, from condition (4) of \cref{def:aim-F}. 
    Recall that we have $\Fnumber(\H[C])=\sum_{i=1}^r a_i \leq r+k-1$. If $\sum_{i=1}^r a_i<r+k-1$, then
    \[
    \Fnumber(\H_2)=\sum_{i=2}^r \Fnumber(\H[C_i]) = \sum_{i=2}^r a_i = \sum_{i=1}^r a_i-a_1 < (r+k-1)-a_1 = (r-1)+ (k-a_1),
    \]
    and hence
    \[
     k-a_1\le\Fnumber(\H_2) = \sum_{i=2}^r \Fnumber(\H[C_i]) \leq (r-1) + (k-a_1) - 1.
    \]
    Thus, $\sqcup_{i=2}^r C_i$ is a $(k-a_1)$-admissible $\mathtt{F}$-set of $\H_2$. By induction, we have
    \begin{align*}
        \reg\left( \mathtt{F}(\H_2,k-a_1) \right) \geq (|\sqcup_{i=2}^r C_i| -  \sum_{i=2}^r a_i ) + (k-a_1) &= (a-|C_1| + a_1) + (k-a_1) \\
        &= a -|C_1|+ k.
    \end{align*}
    Thus, by (\ref{equ:regularity-betti-F}), we have
    \[
    \reg \left( \mathtt{F}(\H[C],k) \right) \geq |C_1| + \reg\left( \mathtt{F}(\H_2,k-a_1) \right) \geq  |C_1| + \left(a -|C_1|+ k\right)= a+k, 
    \]
    as desired. Now we can assume that $\sum_{i=1}^r a_i=r+k-1$. By \cref{def:aim-F} (2) and \Cref{lem:Fnumber-properties} (1), we have
    \[
    \Fnumber(H_2) =\sum_{i=2}^r \Fnumber(H[C_i]) = \sum_{i=1}^r a_i -a_1 = (r+k-1)-a_1 = (r-1)+ (k-a_1+1)-1.
    \]
    Since $r\geq 2$, we have
    \[
    k-a_1+1\leq  (r-2) + (k-a_1+1) =  \Fnumber(H_2) = (r-1)+ (k-a_1+1)-1.
    \]
    Thus, $\sqcup_{i=2}^r C_i$ is a $(k-a_1+1)$-admissible $\mathtt{F}$-set of $\H_2$. By induction,    we have
    \begin{align*}
        \reg\left( \mathtt{F}(\H_2,k-a_1+1) \right) &\geq \left(|\sqcup_{i=2}^r C_i| -\sum_{i=2}^{r} \Fnumber(\H[C_i]) \right) + (k-a_1+1)\\
        &= (a-|C_1| + a_1) + (k-a_1+1) \\
        &= a-|C_1|+k+1.
    \end{align*}
    Thus, by (\ref{equ:regularity-betti-F}), we have
    \begin{align*}
        \reg \left(\mathtt{F}(\H[C],k) \right) &\geq |C_1| + \reg\left( \mathtt{F}(\H_2,k-a_1+1) \right) -1\\
        &\geq  |C_1| + \left(a-|C_1|+k+1 \right)-1 \\
        &= a+k, 
    \end{align*}
    as desired. This concludes the proof.
\end{proof}

\begin{remark}\label{ordinary SPLF}
    The (ordinary) squarefree powers $\mathtt F:(\H,k)\mapsto I(\H)^{[k]}$ is an example of a squarefree-power-like function. Indeed, it is easy to see that $I(\H)^{[k]}$ satisfies all the conditions in \Cref{def:sq-free power like} (see also \cite{ChauDasRoySahaSqfOrd}). Moreover, in this situation any $k$-admissible $\mathtt F$-set $C=\sqcup_{i=1}^rC_i$ of $\H$ can be realized as a generalized $k$-admissible matching $M=\sqcup_{i=1}^rM_i$ of $\H$, in the sense of \cite[Definition 3.1]{ChauDasRoySahaSqfOrd}. Thus, by \Cref{thm:lower-bound},
    \[
\reg(I(\H)^{[k]})\ge\max\{|V(M)|-|M|\mid M\text{ is a generalized }k\text{-admissible matching of }\H\}+k.
    \]
This fact was also established in \cite[Theorem 3.11]{ChauDasRoySahaSqfOrd}.
\end{remark}

\begin{remark}\label{rmk: CI reg}
    Given a squarefree-power-like function $\F$. Let $\H$ be the union of pairwise disjoint edges. Then by \cref{prop:coincide-sqf-power}, we may as well assume that $\F$ is the action of taking squarefree powers. Then as shown in \cite[Proposition~3.10]{ChauDasRoySahaSqfOrd}, we have $\reg(\F(\H,k))=\adm^{\F}(\H,k)+k$ for any $1\leq k\leq \Fnumber(\H)$. In other words, our lower bound in \cref{thm:lower-bound} is sharp.
\end{remark}

\begin{remark}
    An analogue of \Cref{thm:refined-betti-splittings} can be obtained for 
$\depth \, \mathtt{F}(\mathcal{H}_1+\mathcal{H}_2,k)$ by applying 
\cite[Theorem~5.3]{THT2020}. On the other hand, deriving a combinatorial 
estimate for $\depth \, \mathtt{F}(\mathcal{H},k)$, in the spirit of 
\Cref{thm:lower-bound} appears to be a considerably more challenging problem. 
In particular, it remains an interesting direction to establish a combinatorial 
lower bound for $\depth \, \mathtt{F}(\mathcal{H},k)$ for all 
$1 \leq k \leq \Fnumber(\mathcal{H})$, where $\mathtt{F}$ denotes a 
squarefree-power-like function.

\end{remark}

\section{Applications to  squarefree symbolic powers}\label{sec: app to sym pwr}

Immediate applications of squarefree-power-like functions include squarefree powers $(\H,k)\mapsto I(\H)^{[k]}$ and squarefree symbolic powers $(\H,k)\mapsto I(\H)^{\{k\}}$. In fact, in the case of squarefree powers, our definition coincides with that of Erey and Hibi \cite{ErHi1}, and $\adm^{\mathtt{F}}$ in this case can be phrased using matchings from graph theory. In a different article \cite{ChauDasRoySahaSqfOrd}, we have explored how sharp this bound is for various squarefree powers of edge ideals. For the rest of the article, we explore this for squarefree symbolic powers and see how our invariants can be related to concepts in combinatorics. First, we provide a proof for why $(\H,k)\mapsto I(\H)^{\{k\}}$ is a squarefree-power-like function. We start with a small lemma, which is a generalization of \cite[Theorem~4.7]{CEM25}. However, as the proof works line-by-line, we refer to that of \cite[Theorem~4.7]{CEM25}.

\begin{lemma}\label{lem:symbolic-restriction}
    Let $\mathcal{H}$ be a hypergraph and $\mathcal{H}_1$ an induced sub-hypergraph of $\H$. Then \[
    \left(I(\H)^{(k)}\right)^{\leq \prod_{x\in V(\H_1)} x} = I(\H_1)^{(k)}. \qedhere
    \] 
\end{lemma}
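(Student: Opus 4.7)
The plan is to reduce everything to the irredundant primary decomposition. Since $I(\H)$ is squarefree, $I(\H)^{(k)}=\bigcap_{C}(C)^{k}$ where $C$ ranges over the minimal vertex covers of $\H$, and similarly $I(\H_1)^{(k)}=\bigcap_{C'}(C')^{k}$ with $C'$ ranging over minimal vertex covers of $\H_1$. The containment question then becomes a question about how minimal vertex covers of $\H$ restrict to $V(\H_1)$ and how minimal vertex covers of $\H_1$ extend to $\H$.

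For the easier containment $I(\H_1)^{(k)}\subseteq \bigl(I(\H)^{(k)}\bigr)^{\leq \x_{V(\H_1)}}$, I would note first that every generator of $I(\H_1)^{(k)}$ is a monomial supported on $V(\H_1)$, so only the inclusion $I(\H_1)^{(k)}\subseteq I(\H)^{(k)}$ must be shown. Given any minimal vertex cover $C$ of $\H$, the set $C\cap V(\H_1)$ is a vertex cover of $\H_1$ because $\H_1$ is induced, so each edge of $\H_1$ is an edge of $\H$ lying inside $V(\H_1)$. Hence $C\cap V(\H_1)$ contains a minimal vertex cover $C'$ of $\H_1$, giving $(C')^{k}\subseteq (C)^{k}$ and therefore $I(\H_1)^{(k)}\subseteq (C)^{k}$ for every such $C$.

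For the reverse containment, I would fix a monomial $m$ with $\supp(m)\subseteq V(\H_1)$ and $m\in I(\H)^{(k)}$, together with an arbitrary minimal vertex cover $C'$ of $\H_1$, and aim to show $m\in (C')^{k}$. The central construction is a minimal vertex cover $\widetilde{C}$ of $\H$ with $\widetilde{C}\cap V(\H_1)=C'$. I would start from $\widetilde{C}_{0}=C'\cup (V(\H)\setminus V(\H_1))$, which is manifestly a vertex cover of $\H$, and then greedily discard redundant vertices from $V(\H)\setminus V(\H_1)$ until reaching a minimal vertex cover $\widetilde{C}$. The minimality of $C'$ inside $\H_1$ ensures that no vertex of $C'$ is ever discarded: for each $x\in C'$ there is an edge $\E$ of $\H_1$ with $\E\cap C'=\{x\}$, and since $\E\subseteq V(\H_1)$ this edge meets $\widetilde{C}_{0}$ (and hence $\widetilde{C}$) only at $x$. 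Once $\widetilde{C}=C'\sqcup D$ with $D\subseteq V(\H)\setminus V(\H_1)$ is in place, we have $m\in (\widetilde{C})^{k}$; but $\supp(m)\cap D=\emptyset$, so any expression of $m$ as a multiple of a degree-$k$ monomial in the variables of $\widetilde{C}$ can use only variables in $C'$, forcing $m\in (C')^{k}$.

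The main technical obstacle is precisely the construction of $\widetilde{C}$ together with the argument that vertices of $C'$ survive the reduction to minimality; everything else is routine manipulation of primary decompositions and supports of monomials, and should mirror the proof of \cite[Theorem~4.7]{CEM25} line by line.
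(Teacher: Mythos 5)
Your proof is correct. The paper does not actually write out an argument for this lemma---it defers entirely to \cite[Theorem~4.7]{CEM25} with the remark that the proof carries over line by line---so your self-contained argument via the decomposition $I(\H)^{(k)}=\bigcap_{C}\p_{C}^{\,k}$ over minimal vertex covers is the natural one and fills that gap. Both halves check out: for $I(\H_1)^{(k)}\subseteq I(\H)^{(k)}$, the fact that $C\cap V(\H_1)$ covers $\H_1$ uses exactly that $\H_1$ is induced; for the reverse inclusion, the key construction of a minimal vertex cover $\widetilde C$ of $\H$ with $\widetilde C\cap V(\H_1)=C'$ works, and your observation that the witnessing edges $\E\subseteq V(\H_1)$ with $\E\cap C'=\{x\}$ protect each $x\in C'$ from being discarded is precisely the right point; the final step $\sum_{x\in C'}\deg_x(m)=\sum_{x\in\widetilde C}\deg_x(m)\ge k$ is then immediate. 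One caveat worth recording: you implicitly read $J^{\le \x_{V(\H_1)}}$ as the ideal of monomials of $J$ \emph{supported} in $V(\H_1)$, whereas the paper's literal definition (monomials of $J$ dividing $\x_{V(\H_1)}$) would only pick up squarefree monomials and would make the stated equality fail already for a single edge with $k=2$, since $I(\H_1)^{(2)}$ is then generated by a non-squarefree monomial. Your reading is the one under which the lemma is true and under which its application in \Cref{Symbolic SPLF} goes through, but the discrepancy with \Cref{def:sq-free power like} and the surrounding text deserves a sentence of clarification.
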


\begin{proposition}\label{Symbolic SPLF}
    Let $\mathtt{F}(\H,k)=I(\H)^{\{k\}}$ for any hypergraph $\H$ and integer $k$. Then $\mathtt{F}$ is a squarefree-power-like function.
\end{proposition}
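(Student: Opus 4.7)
The plan is to check the four conditions (a)--(d) of \Cref{def:sq-free power like} one by one for $\F(\H,k)=I(\H)^{\{k\}}$.

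Condition (a) is immediate from the conventions: $I(\H)^{\{0\}}=R$ and $I(\H)^{\{1\}}=I(\H)$, and $I(\H)^{(k)}$ is generated by monomials in $\K[V(\H)]$, hence so is its squarefree part $I(\H)^{\{k\}}$. Condition (b) follows by combining the known fact $\partial^{*}(I^{(k)})\subseteq I^{(k-1)}$ for any monomial ideal $I$ (see \cite[Proposition~5.10]{THT2020}) with \Cref{lem:sqf-del-condition}.

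For condition (c), I would appeal to \Cref{lem:symbolic-restriction}, which yields $(I(\H)^{(k)})^{\leq \x_{V(\H_1)}}=I(\H_1)^{(k)}$. Since $\x_{V(\H_1)}$ is squarefree, every monomial dividing it is squarefree, so the operations of restricting by $\x_{V(\H_1)}$ and of taking the squarefree part commute. Thus
\[
(I(\H)^{\{k\}})^{\leq \x_{V(\H_1)}}=\sqf\bigl((I(\H)^{(k)})^{\leq \x_{V(\H_1)}}\bigr)=\sqf(I(\H_1)^{(k)})=I(\H_1)^{\{k\}}.
\]

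Condition (d) is the main obstacle, and the plan is to first establish the ordinary symbolic analogue
\[
I(\H_1+\H_2)^{(k)}=\sum_{i=0}^{k}I(\H_1)^{(i)}\,I(\H_2)^{(k-i)}
\]
for $\H_1$ and $\H_2$ on disjoint vertex sets. For this, I would observe that the minimal primes of $I(\H_1)+I(\H_2)$ are precisely the ideals $P+Q$ with $P\in\ass_{I(\H_1)}$ and $Q\in\ass_{I(\H_2)}$, which live in disjoint sets of variables, so that
\[
I(\H_1+\H_2)^{(k)}=\bigcap_{P,Q}(P+Q)^{k}=\bigcap_{P,Q}\sum_{a+b=k}P^{a}Q^{b}.
\]
For a monomial $m=m_1m_2$ split along the variable partition, membership in every $\sum_{a+b=k}P^{a}Q^{b}$ is easily seen to be equivalent to $\alpha+\beta\geq k$, where $\alpha$ (resp.\ $\beta$) is the largest integer with $m_1\in I(\H_1)^{(\alpha)}$ (resp.\ $m_2\in I(\H_2)^{(\beta)}$), and this is precisely the condition for $m\in \sum_{i+j=k}I(\H_1)^{(i)}I(\H_2)^{(j)}$. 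To finish, I take the squarefree part of both sides of the displayed identity, using the elementary facts $\sqf(A+B)=\sqf(A)+\sqf(B)$ for monomial ideals and, when $A$ and $B$ lie in disjoint sets of variables, $\sqf(AB)=\sqf(A)\sqf(B)$, which together yield $I(\H_1+\H_2)^{\{k\}}=\sum_{i=0}^{k}I(\H_1)^{\{i\}}I(\H_2)^{\{k-i\}}$, as desired.
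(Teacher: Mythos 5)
Your proposal is correct and follows essentially the same route as the paper: conditions (a)--(c) are handled identically (with (c) resting on \Cref{lem:symbolic-restriction} and the commutation of restriction with taking squarefree parts), and (d) is obtained by taking squarefree parts of the identity $I(\H_1+\H_2)^{(k)}=\sum_{i}I(\H_1)^{(i)}I(\H_2)^{(k-i)}$. The only difference is that where the paper simply cites \cite[Theorem 3.4]{THT2020} for that identity, you supply a direct (and correct) proof via the primary decomposition over the minimal primes $P+Q$; this makes the argument self-contained but does not change the structure of the proof.
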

\begin{proof}
    The condition (a) of \Cref{def:sq-free power like} follows directly from the definition of squarefree symbolic powers. The condition (b) is due to \cite[Proposition~5.10]{THT2020} and \Cref{lem:sqf-del-condition}. The condition (d) is obtained by taking the squarefree part of the equation in \cite[Theorem 3.4]{THT2020}. As for condition (c), consider a hypergraph $\H$, an induced sub-hypergraph $\H_1$ of $\H$, and any integer $k$. By \cref{lem:symbolic-restriction}, we have
    \[
    \left(I(\H)^{(k)}\right)^{\leq \prod_{x\in V(\H_1)} x} = I(\H_1)^{(k)}.
    \]
    Taking the squarefree part of both sides of the above equality, we obtain
    \[
    \sqf\left(\left(I(\H)^{(k)}\right)^{\leq \prod_{x\in V(\H_1)} x}\right) = I(\H_1)^{\{k\}}.
    \]
 Note that taking the squarefree part is the same as restriction with respect to the product of all variables, and restriction commutes with each other.  Thus,
    \[
    I(\H_1)^{\{k\}}=\sqf\left(I(\H)^{(k)}\right)^{\leq \prod_{x\in V(\H_1)} x}=\left(\sqf(I(\H)^{(k)})\right)^{\leq \prod_{x\in V(\H_1)} x}=\left(I(\H)^{\{k\}}\right)^{\leq \prod_{x\in V(\H_1)} x},
    \]
    as desired.
\end{proof}

Our next goal is to specialize our concepts for squarefree-power-like functions to the case of taking squarefree symbolic powers of the edge ideal of hypergraphs. We will relate them to the combinatorial objects that are known to be linked with symbolic powers namely, independent sets and vertex covers.

Let $\H$ be a hypergraph. Recall that the \textit{independent number} of $\H$, denoted by $\ind(\H)$, is the maximum cardinality of an independent set of $\H$. On the other hand, the \textit{vertex covering number} of $\H$, denoted by $\beta(\H)$, is the minimum cardinality of a vertex cover of $\H$. These two graph-theoretic invariants are instrumental in the study of symbolic powers of edge ideals (see, for instance, \cite{Sullivant2008}). For the squarefree symbolic power, the invariant $\beta(\H)$ plays a pivotal role as seen in \cite{FakhariSymbSq-free} since $I(\H)^{\{k\}}\neq ( 0)$ if and only if $1\le k\le \beta(\H)$. 

In the following definition, we make use of $\alpha(\H)$ to define a certain combinatorial invariant which we call as the $k$-admissible independence number and it turns out that for squarefree symbolic powers of block graphs and certain squarefree symbolic powers of Cohen-Macaulay bipartite graphs the $k$-admissible independence number is same as the regularity of these ideals (see \cref{sec: block} and Section \cref{sec: CM chordal}). 
	
	\begin{definition}\label{def: sym aim for graph}
		Let $\H$ be a hypergraph. Then for any $1\leq k\leq \beta(\H)$, a vertex set $C\subseteq V(\H)$ is called a \emph{$k$-admissible 
        set} of $\H$ if there is a partition $C=\sqcup_{i=1}^rC_i$ such that 
		such that the following holds:
		\begin{enumerate}
			\item for any $i\in [r]$, we have $E(\H[C_i])\neq \emptyset$;
			\item for any edge $e\in E(\H[C])$, we have $e\in E(\H[C_i])$ for some $i\in [r]$;
			\item $k\leq \sum_{i=1}^r \beta(\H[C_i]) \leq r+k-1$;
			\item for each $i\in[r] $, $I(\H[C_i])^{\{\beta(\H[C_i])\}}=\left( \mathbf{x}_{C_i} \right )$.
		\end{enumerate}
		
		For every such $C$, we compute the value of $\ind(\H[C])$. We call the largest value among those, denoted by $\adms(\H,k)$, the \emph{$k$-admissible independence number} of $\H$, i.e.,
		\[
		\adms(\H,k)=\max\{\alpha(\H[C]): C\text{ is a $k$-admissible set of }\H\}
		\]
	\end{definition}
	
	\begin{remark}
		For any $k$-admissible set of $\H$, we have
		\[
		\ind(\H[C])=\sum_{i=1}^r\ind(\H[C_i])=\sum_{i=1}^r (|C_i|- \beta(\H[C_i]))= |C|-\sum_{i=1}^r \beta(\H[C_i]).
		\]
	\end{remark}
	
	\begin{remark}
		For any hypergraph $\H$, the $1$-admissible independence number of $\H$ is the induced matching number of $\H$. In other words, $\adms(\H,1)=\nu(\H)$. To see this, consider $C\subseteq V(\H)$ with a partition $C=\sqcup_{i=1}^r C_i$ such that $\alpha(\H[C])=\adms(\H,1)$. Since $k=1$, from conditions (2) and (3) of  \Cref{def: sym aim for graph}, we have $\beta(\H[C_i])=1$ for all $1\leq i\leq r$. Then, it follows from condition (4) of \Cref{def: sym aim for graph} that $|E(\H[C_i])|=1$. Thus, $\H[C]$ is an induced subgraph of $\H$ consisting of disjoint edges of $\H$, and hence is an induced matching of $\H$. On the other hand, the vertex set of any induced matching of $\H$ can be regarded as a $1$-admissible set of $\H$. 
	\end{remark}
	\begin{lemma}\label{lem: induced adm}
		Let $\H$ be a hypergraph and let $\H'$ be an induced sub-hypergraph of $\H$. Then for any $k\geq 1$, $\adms(\H',k)\leq \adms(\H,k)$.
	\end{lemma}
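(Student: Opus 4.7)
The plan is to show directly that every $k$-admissible set of $\H'$ is also a $k$-admissible set of $\H$, with the same value of the associated independence number. Since $\adms(\H',k)$ is a maximum over $k$-admissible sets of $\H'$ and $\adms(\H,k)$ is a maximum over the (at least as large) family of $k$-admissible sets of $\H$, this inequality will follow immediately.

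The key observation driving the argument is a transitivity property of induced sub-hypergraphs: if $\H'$ is induced in $\H$ and $W \subseteq V(\H')$, then $\H'[W]=\H[W]$ as hypergraphs. First, I would fix a $k$-admissible set $C' \subseteq V(\H')$ of $\H'$ with witnessing partition $C' = \sqcup_{i=1}^r C_i'$, so that conditions (1)--(4) of \Cref{def: sym aim for graph} are satisfied inside $\H'$. Then I would verify each of the four conditions holds for the same partition, but now viewed inside $\H$. Conditions (1) and (2) transfer because $E(\H'[W])=E(\H[W])$ for every $W \subseteq V(\H')$, so $E(\H[C_i'])=E(\H'[C_i'])\neq \emptyset$ and any edge in $E(\H[C'])=E(\H'[C'])$ still belongs to some $E(\H[C_i'])$. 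Condition (3) transfers because $\beta(\H[C_i'])=\beta(\H'[C_i'])$ for each $i$, so the sum $\sum_i \beta(\H[C_i'])$ is unchanged and still lies in $[k, r+k-1]$. Condition (4) transfers because $I(\H[C_i'])=I(\H'[C_i'])$ as ideals of $S$, hence $I(\H[C_i'])^{\{\beta(\H[C_i'])\}} = I(\H'[C_i'])^{\{\beta(\H'[C_i'])\}}=(\mathbf{x}_{C_i'})$.

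Finally, the independence number is also invariant: $\alpha(\H[C']) = \alpha(\H'[C'])$, again because $\H[C']=\H'[C']$. Thus $C'$ witnesses
\[
\adms(\H,k) \geq \alpha(\H[C']) = \alpha(\H'[C']).
\]
Taking the maximum over all $k$-admissible sets $C'$ of $\H'$ yields $\adms(\H',k) \leq \adms(\H,k)$. I do not anticipate a genuine obstacle here; the proof is essentially bookkeeping of definitions, and the only thing to be careful about is invoking ``induced'' at each step so that all four conditions, as well as the independence number, are preserved when we pass from $\H'$ to $\H$.
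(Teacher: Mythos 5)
Your proposal is correct and matches the paper's proof: both arguments take a $k$-admissible set $C'$ of $\H'$ realizing $\adms(\H',k)$, use the transitivity identity $\H[W]=\H'[W]$ for $W\subseteq V(\H')$ to conclude that $C'$ with the same partition is a $k$-admissible set of $\H$ with the same independence number, and deduce the inequality. Your write-up simply spells out the verification of conditions (1)--(4) in more detail than the paper does.
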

	\begin{proof}
		Let $C=\sqcup_{i=1}^rC_i\subseteq V(\H')$ be such that $\adms(\H',k)=\ind(\H'[C])$. Since $\H'$ is an induced subgraph of $\H$, it follows that $\H[C]=\H'[C]$, and therefore $\H[C_i]=\H'[C_i]$ for all $i\in [r]$. Hence, $C\subseteq V(\H)$ is a $k$-admissible set of $\H$ and the assertion follows.
	\end{proof}

 We then obtain an application of Theorem~\ref{thm:lower-bound}.

\begin{corollary}\label{cor:lower bound sqf symbolic}
    Let $\H$ be a hypergraph and $1\leq k\leq \height I(\H)$. Let $C=\sqcup_{i=1}^r C_i$ be an  $k$-admissible $\mathtt{F}$-set of $\H$. Then 
    \[
    \reg (I(\H)^{\{k\}} )\geq |C| - \beta(\H[C]) + k = \ind(H[C]) +k.
    \]
    Consequently, we have
    \[
    \reg (I(\H)^{\{k\}}) \geq \adms(\H,k)+k.
    \]
\end{corollary}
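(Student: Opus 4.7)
The plan is to derive this corollary as a direct specialization of Theorem \ref{thm:lower-bound} to the squarefree-power-like function $\F(\H,k) = I(\H)^{\{k\}}$, so the bulk of the work is already done. First I would invoke Proposition \ref{Symbolic SPLF} to confirm that $\F$ defined by squarefree symbolic powers is indeed a squarefree-power-like function, so that Theorem \ref{thm:lower-bound} applies. Then I would identify the $\F$-number for this $\F$: the discussion preceding Corollary \ref{cor:squarefree-powers-Tor-vanishing} shows that $\nu(\{I(\H)^{\{n\}}\}_{n\geq 0}) = \height I(\H) = \beta(\H)$, so $\Fnumber(\H) = \beta(\H)$, and likewise $\Fnumber(\H[C_i]) = \beta(\H[C_i])$ for each induced subhypergraph.

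With this identification, the definition of a $k$-admissible $\F$-set in \Cref{def:aim-F} is term-by-term the same as the definition of a $k$-admissible set in \Cref{def: sym aim for graph}. Applying \Cref{thm:lower-bound} therefore yields
\[
\reg (I(\H)^{\{k\}}) \;\geq\; |C| - \sum_{i=1}^r \Fnumber(\H[C_i]) + k \;=\; |C| - \sum_{i=1}^r \beta(\H[C_i]) + k.
\]

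Next I would justify the equality $|C| - \beta(\H[C]) = \ind(\H[C])$ and the identity $\beta(\H[C]) = \sum_{i=1}^r \beta(\H[C_i])$. The latter uses condition (2) of \Cref{def: sym aim for graph}: every edge of $\H[C]$ lies in some $\H[C_i]$, and since the $C_i$ partition $C$, the hypergraph $\H[C]$ is the disjoint union of the $\H[C_i]$. Vertex covering numbers are additive across disjoint unions, giving the sum identity, and the well-known complementarity $\ind(\H[C]) + \beta(\H[C]) = |C|$ then yields the claimed equality in the statement.

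Finally, to conclude the consequence, I would simply take the maximum over all $k$-admissible sets $C$ of $\H$: by the very definition of $\adms(\H,k)$ in \Cref{def: sym aim for graph}, this maximum equals $\adms(\H,k)$, which gives $\reg(I(\H)^{\{k\}}) \geq \adms(\H,k)+k$. There is no substantive obstacle in this argument — the work was in proving \Cref{thm:lower-bound} and in verifying that squarefree symbolic powers form a squarefree-power-like function; the only thing to watch is the clean translation $\Fnumber = \beta$ so that the abstract admissibility conditions of \Cref{def:aim-F} specialize precisely to those of \Cref{def: sym aim for graph}.
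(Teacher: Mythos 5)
Your proposal is correct and follows exactly the route the paper intends: the corollary is presented there as a direct application of \Cref{thm:lower-bound} via \Cref{Symbolic SPLF}, with the identification $\Fnumber(\H)=\height I(\H)=\beta(\H)$ translating \Cref{def:aim-F} into \Cref{def: sym aim for graph}, and the complementarity $\ind(\H[C])+\beta(\H[C])=|C|$ (already recorded in the remark after \Cref{def: sym aim for graph}) giving the stated reformulation. Nothing is missing.
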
   
	
	The following lemma provides an effective way to check the condition (4) of \Cref{def: sym aim for graph} for a squarefree monomial ideal.
	 
	\begin{lemma}\label{lem: symb principal}
		Let $\H$ be a hypergraph, and let $k\geq 1$ be an integer such that $I(\H)^{\{k\}}\neq ( 0)$. Then $I(\H)^{\{k\}} =\left ( \x_{V(\H)} \right )$ if and only if for each $x\in V(\H)$, there is a minimal vertex cover $\C$ of $\H$ such that $x\in \C$ and $|\C|=k$. Moreover, if $I(\H)^{\{k\}}=\left ( \x_{V(\H)} \right )$, then $k=\beta(\H)$.
	\end{lemma}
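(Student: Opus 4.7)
The plan is to translate the algebraic identity into a combinatorial condition via the primary decomposition of $I(\H)$. Writing $I(\H) = \bigcap_\C \p_\C$ with $\C$ running over the minimal vertex covers of $\H$, one has $I(\H)^{(k)} = \bigcap_\C \p_\C^k$ by the formula recalled in \Cref{sec: Preli}. Since $\p_\C^k$ is generated by $k$-fold products of variables from $\C$, a squarefree monomial $\x_S$ lies in $\p_\C^k$ precisely when $|S \cap \C| \geq k$. As $I(\H)^{\{k\}}$ and $I(\H)^{(k)}$ contain the same squarefree monomials, this yields the key criterion: a squarefree monomial $\x_S$ belongs to $I(\H)^{\{k\}}$ if and only if $|S \cap \C| \geq k$ for every minimal vertex cover $\C$ of $\H$. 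In particular, the hypothesis $I(\H)^{\{k\}} \neq (0)$ immediately forces $\beta(\H) \geq k$, a fact I would use throughout.

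For the forward direction, I would fix $x \in V(\H)$ and examine the squarefree monomial $\x_{V(\H)\setminus\{x\}}$. Being a proper divisor of $\x_{V(\H)}$, it cannot lie in the principal ideal $(\x_{V(\H)}) = I(\H)^{\{k\}}$, so the criterion yields a minimal vertex cover $\C$ with $|\C \setminus \{x\}| \leq k-1$. Combining this with $|\C| \geq \beta(\H) \geq k$ forces both $x \in \C$ and $|\C| = k$, producing the required cover. Conversely, suppose each $x \in V(\H)$ is contained in some minimal vertex cover $\C_x$ with $|\C_x| = k$. Applying the criterion to $S = V(\H)$ gives $\x_{V(\H)} \in I(\H)^{\{k\}}$, using that every minimal vertex cover has size at least $\beta(\H) \geq k$. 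For the opposite inclusion, assume toward contradiction that some squarefree minimal generator $\x_S$ of $I(\H)^{\{k\}}$ has $S \subsetneq V(\H)$; choosing $x \in V(\H) \setminus S$ and invoking the hypothesis, $|S \cap \C_x| \leq |\C_x \setminus \{x\}| = k-1$, contradicting $\x_S \in I(\H)^{\{k\}}$. Hence $\x_{V(\H)}$ is the unique squarefree generator and $I(\H)^{\{k\}} = (\x_{V(\H)})$.

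The ``moreover'' assertion is then immediate: the minimal vertex covers produced in the forward direction witness $\beta(\H) \leq k$, and together with $\beta(\H) \geq k$ obtained from $I(\H)^{\{k\}} \neq (0)$ yield $\beta(\H) = k$. No step poses a genuine obstacle; the proof amounts to careful bookkeeping once the squarefree criterion is stated. The subtlety worth highlighting is that the assumption $I(\H)^{\{k\}} \neq (0)$ is essential for the converse: without it a hypergraph can admit a minimum vertex cover strictly smaller than $k$ while still having, for each vertex, some minimal vertex cover of size $k$ through it, in which case the claimed identity must fail because $I(\H)^{\{k\}}$ is the zero ideal.
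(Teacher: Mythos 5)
Your proof is correct and follows essentially the same route as the paper: both reduce everything to the criterion that a squarefree monomial lies in $I(\H)^{\{k\}}$ exactly when it meets every minimal vertex cover in at least $k$ vertices, and both directions hinge on examining the monomial $\x_{V(\H)}/x$ (respectively, a generator not divisible by $x$) against a size-$k$ cover through $x$. The only cosmetic difference is that you state this criterion explicitly up front, whereas the paper invokes it implicitly through the intersection formula of \Cref{sqfSymbolic}.
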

	\begin{proof}
		To prove the `if' part, assume that $I(\H)^{\{k\}} \neq\left \l \x_{V(\H)} \right \r$. Then there is some $x\in V(\H)$ and $f\in I(\H)^{\{k\}}$ such that $x\nmid f$. Since there is a minimal vertex cover $\C$ of $\H$ such that $x\in \C$ and $|\C|=k$, it follows that $f\notin \p_{\C}^{\{k\}}$, as $|\supp(f)\cap C|\leq k-1$. Hence, $f\notin I(\H)^{\{k\}}$, a contradiction. Conversely, assume that $f=\x_{V(\H)}$ is a minimal monomial generator of $I(\H)^{\{k\}}$. Then $f\in \p_{\C}^{\{k\}}$ for every minimal vertex cover $\C$ of $\H$. By way of contradiction, let there be a vertex $x\in V(\H)$ such that every minimal vertex cover $\C$ of $\H$ with $x\in \C$ has $|\C|\geq k+1$. Now, consider $g=\frac{f}{x}$. Then observe that, for any minimal vertex cover $\C$ of $\H$ such that $x\notin \C$, 
		$|\supp(g)\cap \C|\geq k$. Moreover, if $\C'$ is a minimal vertex cover with $x\in \C'$, we also have $|\supp(g)\cap \C'|=|\supp(g)\cap (\C'\setminus \{x\})|\geq k$. This shows that $g\in I(\H)^{\{k\}}$, which is again a contradiction. 
		
		The last statement follows from the `only if' part of the above arguments and the fact that $I(\H)^{\{l\}}\neq\l 0\r$ if and only if $l\le\beta(\H)$.
	\end{proof}

\section{Squarefree Symbolic Powers of Block Graphs}\label{sec: block}

In this section, we show that the regularity of the $k^{\text{th}}$ squarefree symbolic powers of edge ideals of block graphs can be expressed in terms of the $k$-admissible independence number of the graph. The proof of this result relies heavily on the analysis of the $k$-admissible independence number of block graphs. To begin with, we present several applications of \Cref{lem: symb principal} that are necessary for the proof of our main theorem in this section. 

\begin{lemma}\label{lem: attach K_n}
	Let $G$ be a graph and $S\subseteq V(G)$. Let $K_n$ be the complete graph on the vertex set $\{x_1, x_2, \dots , x_n\}$, where $n\ge 2$. Consider the graph $\Gamma$ constructed from $G$ and $K_n$ as follows: 
    \begin{align*}
		V(\Gamma)&=V(G)\sqcup V(K_n),\\
		E(\Gamma)&=E(G)\sqcup E(K_n)\sqcup \{\{u,x_n\}\mid u\in S\}.
	\end{align*}
    Then 
    \begin{enumerate}
        \item[$(1)$] $\beta(\Gamma)=\beta(G)+n-1$.
        \item[$(2)$] If $I(\Gamma)^{\{\beta(\Gamma)\}}=\left\l \x_{V(\Gamma)}\right\r$, then $I(G)^{\{\beta(G)\}}=\left\l \x_{V(G)}\right\r$. For $n\ge 3$, the converse of this statement is also true in the case $n\geq 3$.
    \end{enumerate}
	\end{lemma}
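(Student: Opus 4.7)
The central tool throughout is \Cref{lem: symb principal}: the identity $I(\mathcal H)^{\{\beta(\mathcal H)\}}=(\mathbf x_{V(\mathcal H)})$ is equivalent to the statement that $\beta(\mathcal H)$ equals the relevant exponent and every vertex of $\mathcal H$ lies in some minimum vertex cover of $\mathcal H$. My strategy is to reduce each claim to a statement about minimum vertex covers of $G$ and $\Gamma$, via the natural decomposition $V(\Gamma)=V(G)\sqcup V(K_n)$.

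For part (1), I first establish two inequalities. Any vertex cover of $\Gamma$ must contain at least $n-1$ vertices of $V(K_n)$, since $K_n$ is a clique on $n$ vertices. Also, since $V(G)\cap V(K_n)=\emptyset$, the portion of a vertex cover lying in $V(G)$ must by itself cover every edge of $G$, hence has size at least $\beta(G)$. Summing yields $\beta(\Gamma)\geq \beta(G)+n-1$. For the matching upper bound I exhibit an explicit cover: if $C_0$ is any minimum vertex cover of $G$, then $C_0\cup\{x_n,x_1,\dots,x_{n-2}\}$ has size $\beta(G)+n-1$ and covers $\Gamma$, with the crucial observation that including $x_n$ takes care of the bridging edges $\{u,x_n\}$ for $u\in S$.

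For the forward direction of part (2), fix $x\in V(G)$. By \Cref{lem: symb principal} applied to $\Gamma$, there is a vertex cover $\mathcal C$ of $\Gamma$ with $x\in\mathcal C$ and $|\mathcal C|=\beta(\Gamma)$. Because the two lower bounds $|\mathcal C\cap V(G)|\geq \beta(G)$ and $|\mathcal C\cap V(K_n)|\geq n-1$ sum to $\beta(\Gamma)$, both are forced to be equalities. Thus $\mathcal C\cap V(G)$ is a minimum vertex cover of $G$ containing $x$. Since $x$ was arbitrary, \Cref{lem: symb principal} in the reverse direction yields $I(G)^{\{\beta(G)\}}=(\mathbf x_{V(G)})$.

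For the converse under $n\geq 3$, I must produce for each $y\in V(\Gamma)$ a minimum vertex cover of $\Gamma$ of size $\beta(\Gamma)$ containing $y$. The hypothesis combined with \Cref{lem: symb principal} provides minimum vertex covers of $G$ hitting any prescribed vertex. If $y\in V(G)$, choose such a $C_0$ with $y\in C_0$ and take $C_0\cup\{x_n,x_1,\dots,x_{n-2}\}$; the case $y=x_n$ is handled by the same construction with any $C_0$. The delicate case is $y=x_i$ with $i\in[n-1]$: here I still need $x_n$ in the cover (otherwise the edges $\{u,x_n\}$ force $S\subseteq C_0$, which I cannot guarantee), but I also need $x_i$. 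The hypothesis $n\geq 3$ is precisely what allows both: I choose any $j\in[n-1]\setminus\{i\}$ and take $C_0\cup\{x_n\}\cup(\{x_1,\dots,x_{n-1}\}\setminus\{x_j\})$, which contains $x_i$, has size $\beta(G)+n-1$, and misses exactly one vertex $x_j$ of $K_n$, so still covers $K_n$. The main obstacle is a clean case-analysis ensuring the constructed sets really are vertex covers (covering $E(G)$, $E(K_n)$, and the $x_n$-bridging edges simultaneously) and identifying the exact place where the argument breaks for $n=2$; once the correct cardinality is verified, these sets are automatically minimum hence minimal, finishing the proof via \Cref{lem: symb principal}.
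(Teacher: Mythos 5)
Your proposal is correct and follows essentially the same route as the paper: part (1) via the forced $n-1$ clique vertices plus a vertex cover of $G$, and part (2) via \Cref{lem: symb principal} together with the explicit covers $C_0\cup\bigl(V(K_n)\setminus\{x_j\}\bigr)$ for $j\in[n-1]$ (the paper uses exactly these sets, denoted $A_j$, and likewise only needs $A_1$ and $A_2$ for the converse when $n\geq 3$). Your identification of where $n=2$ fails — one cannot simultaneously keep $x_n$ for the bridging edges and afford to include the other clique vertex — matches the paper's counterexample remark following the lemma.
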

	\begin{proof}
		$(1)$ For $i\in[n-1]$, we set $A_i=\{x_1,\ldots,\widehat{x_i},\ldots,x_n\}$. We claim that, if $\C$ is a minimal vertex cover of $G$, then for any $i\in [n-1]$, $\C\cup A_i$ is a minimal vertex cover of $\Gamma$.
		Indeed, it is easy to see that $\C\cup A_i$ is a vertex cover of $\Gamma$. Now, let $\C'\subseteq \C\cup A_i$ be a minimal vertex cover of $\Gamma$. Observe that $A_i\subseteq \C'$ since $\{x_j,x_i\}\in E(\Gamma)$ for each $x_j\in A_i$. Now, suppose $y\in\C$. Then there exists some $z\in N_G(y)$ such that $z\notin\C$ since $\C$ is a minimal vertex cover of $G$. Consequently, $z\notin\C'$ and since $\{y,z\}\in E(\Gamma)$, we must have $y\in\C'$. Thus $\C'=\C\cup A_i$ is a minimal vertex cover of $\Gamma$, and hence $\beta(\Gamma)\le \beta(G)+n-1$. Next, suppose that $\C''$ is a minimal vertex cover of $\Gamma$ such that $|\C''|=\beta(\Gamma)$. Then by the structure of $\Gamma$ we find that $|\C''\cap\{x_1,\ldots,x_n\}|=n-1$. On the other hand, $\C''\cap V(G)$ is a vertex cover of $G$. Thus, $|\C''\cap V(G)|\ge \beta(G)$. Consequently, $|\C''|=|\C''\cap V(G)|+|\C''\cap \{x_1,\ldots,x_n\}|\ge \beta(G)+(n-1)$. This completes the proof. 
        
        $(2)$ Assume that $I(\Gamma)^{\{\beta(G)+n-1\}}=\left\l\x_{V(\Gamma)}\right\r$. Choose some $y\in V(G)$. Then by \Cref{lem: symb principal}, there exists a minimal vertex cover $\C'$ of $\Gamma$ such that $|\C'|=\beta(G)+n-1$. Proceeding as in the previous paragraph, we see that $\C''$ is a vertex cover of $G$ such that $y\in\C''$ with $|\C''|=\beta(G)$, where $\C''=\C\cap V(G)$. Thus, $\C''$ is a minimal vertex cover of $G$. Therefore, by \Cref{lem: symb principal}, we have $I(G)^{\{\beta(G)\}}=\left\l\x_{V(G)}\right\r$, as desired.

        For the converse part, let us assume that $n\geq 3$ and $I(G)^{\{\beta(G)\}}=\left\l\x_{V(G)}\right\r$. Choose some $y\in V(G)$. Then by \Cref{lem: symb principal}, there is a minimal vertex cover $\C$ of $G$ such that $y\in \C$ and $|\C|=|\beta(G)|$. From part $(1)$, we have that both $\C\cup A_1=\C\cup \{x_2,\ldots,x_n\}$ and $\C\cup A_2=\C\cup\{x_1,x_3,\ldots,x_n\}$ are minimal vertex covers of $\Gamma$ such that $|\C\cup A_1|=|\C\cup A_2|=\beta(G)+n-1$. Note that $y,x_2,\ldots,x_n\in \C\cup A_1$ and $x_1\in \C\cup A_2$. Thus, by \Cref{lem: symb principal}, $I(\Gamma)^{\{\beta(G)+n-1\}}=\left\l\x_{V(\Gamma)}\right\r$.
	\end{proof}

\begin{remark}
    The converse part of the assertion $(2)$ of \Cref{lem: attach K_n} is not necessarily true if we assume $n=2$. For instance, take $G=K_3$ with $S=V(G)=\{y_1,y_2,y_3\}$ and $V(K_2)=\{x_1,x_2\}$. It is easy to see that $\beta(G)=2$ and $I(G)^{\{2\}}=\left\l y_1y_2y_3\right\r$. On the other hand, since $\Gamma$ is nothing but $K_4$ with a whisker attached to one of the vertices, we see that $\beta(\Gamma)=3$ and consequently, $y_1y_2y_3x_2\in I(\Gamma)^{\{3\}}$. Thus, $I(\Gamma)^{\{3\}}\neq \left\l \x_{V(\Gamma)}\right\r$. 
\end{remark}

\begin{lemma}\label{lem: attaching whiskered K_n}
    Let $G$ be a graph and fix a vertex $v\in V(G)$. Let $W(K_n)$ denote the whiskered graph on the complete graph $K_n, n\geq 1$. Let $\Gamma$ be a graph obtained from $G$ and $W(K_n)$, where
    \begin{align*}
		V(\Gamma)&=V(G)\sqcup V(W(K_n)),\\
		E(\Gamma)&=E(G)\sqcup E(W(K_n))\sqcup \{\{v,x\}\mid x\in V(K_n)\}.
	\end{align*}
    Then 
    \begin{enumerate}
        \item[$(1)$] $\beta(\Gamma)=\beta(G)+n$.
        \item[$(2)$] $I(\Gamma)^{\{\beta(\Gamma)\}}=\left\l\x_{V(\Gamma)}\right\r$ if and only if $I(G)^{\{\beta(G)\}}=\left\l\x_{V(G)}\right\r$.
    \end{enumerate}
\end{lemma}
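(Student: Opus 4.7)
The plan is to reduce both parts to a combinatorial analysis of minimum vertex covers of $\Gamma$, using \Cref{lem: symb principal} as the main translator. First I would fix notation: write $V(W(K_n))=\{x_1,\dots,x_n\}\cup\{y_1,\dots,y_n\}$, where $x_1,\dots,x_n$ span the clique $K_n$ and $y_i$ is the whisker attached to $x_i$. Recall that $\Gamma$ is obtained by adding all edges $\{v,x_i\}$.

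For part $(1)$, the key structural observation is that any vertex cover $\mathcal{C}$ of $\Gamma$ must contain, for each $i$, at least one of $\{x_i,y_i\}$, and must contain at least $n-1$ of the $x_i$'s to cover $K_n$. I would split into the two minimal sub-cases: either (A) all $x_i\in\mathcal{C}$, in which case the attachment edges $\{v,x_i\}$ are already covered and $\mathcal{C}\cap V(G)$ must be a vertex cover of $G$, giving $|\mathcal{C}|\ge n+\beta(G)$; or (B) exactly one $x_j\notin\mathcal{C}$ (hence $y_j\in\mathcal{C}$), in which case the edge $\{v,x_j\}$ forces $v\in\mathcal{C}$ and $\mathcal{C}\cap V(G)$ must be a vertex cover of $G$ containing $v$, giving $|\mathcal{C}|\ge n+\beta(G)$ again. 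The upper bound $\beta(\Gamma)\le n+\beta(G)$ is witnessed by $\mathcal{C}_{\min}\cup\{x_1,\dots,x_n\}$ for any minimum vertex cover $\mathcal{C}_{\min}$ of $G$.

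For part $(2)$, by \Cref{lem: symb principal} it suffices to show that for every $w\in V(\Gamma)$ there is a minimum vertex cover of $\Gamma$ containing $w$ if and only if the analogous statement holds for $G$. The forward direction: given such a $\mathcal{C}'$ for $u\in V(G)$, the restriction $\mathcal{C}'\cap V(G)$ is a vertex cover of $G$, and the size count in part $(1)$ forces $|\mathcal{C}'\cap V(G)|=\beta(G)$, so $\mathcal{C}'\cap V(G)$ is a minimum (hence minimal) vertex cover of $G$ containing $u$. For the reverse direction, given $I(G)^{\{\beta(G)\}}=(\mathbf{x}_{V(G)})$ I would construct a minimum cover of $\Gamma$ containing each $w\in V(\Gamma)$ as follows: if $w\in V(G)$ take a minimum vertex cover $\mathcal{C}$ of $G$ containing $w$ and set $\mathcal{C}\cup\{x_1,\dots,x_n\}$; if $w=x_i$ take any minimum vertex cover of $G$ and again add all of $\{x_1,\dots,x_n\}$; if $w=y_j$ take a minimum vertex cover $\mathcal{C}$ of $G$ containing $v$ (which exists by hypothesis applied to the vertex $v$) and use $\mathcal{C}\cup\{y_j\}\cup\{x_i:i\ne j\}$, which is a vertex cover of $\Gamma$ of size $\beta(G)+n$ by the same verification as in case (B) above.

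The main obstacle, though it is more bookkeeping than genuine difficulty, will be the case $w=y_j$ in the reverse direction: it is the only place where the hypothesis on $G$ is genuinely used, since one needs a minimum vertex cover of $G$ that specifically contains the attachment vertex $v$ in order to cover the edge $\{v,x_j\}$ after choosing $y_j$ instead of $x_j$. The rest of the argument is a clean disjoint-union calculation, and no analogue of the $n=2$ anomaly from \Cref{lem: attach K_n} arises here because the presence of the whiskers $\{x_i,y_i\}$ already forces $|\mathcal{C}\cap V(W(K_n))|\ge n$, leaving exactly $\beta(G)$ slots in $V(G)$ regardless of $n$.
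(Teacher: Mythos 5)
Your proposal is correct and follows essentially the same route as the paper: part $(1)$ by the same count showing any cover of $\Gamma$ meets $V(W(K_n))$ in at least $n$ vertices while its trace on $V(G)$ covers $G$, and part $(2)$ via \Cref{lem: symb principal} with the identical explicit constructions $\mathcal{C}\cup\{x_1,\dots,x_n\}$ and, for a whisker vertex $y_j$, $\mathcal{C}\cup\{y_j\}\cup\{x_i\mid i\neq j\}$ with $v\in\mathcal{C}$. The only cosmetic difference is the ``only if'' direction of $(2)$, which you prove directly by restricting a minimum cover of $\Gamma$ to $V(G)$, whereas the paper deduces it by repeated application of \Cref{lem: attach K_n}; both rest on the same counting.
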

\begin{proof}
    Let $V(K_n)=\{x_1,x_2, \dots , x_n\}$ and $V(W(K_n))=V(K_n)\cup \{y_1,y_2, \dots , y_n\}$ with $E(W(K_n))=E(K_n)\cup \{\{x_i,y_i\}\mid i\in [n]\}$.

    $(1)$ Let $\C$ be a minimal vertex cover of $G$. Then, proceeding as in \Cref{lem: attach K_n}, it is easy to see that $\C\cup V(K_n)$ is a minimal vertex cover of $\Gamma$. Thus, $\beta(\Gamma)\leq \beta(G)+n$. Next, suppose $C$ is a minimal vertex cover of $\Gamma$ such that $|\C|<\beta(G)+n$. Observe that $n\ge |\C\cap V(K_n)|\geq n-1$. If $\C\cap V(K_n)= V(K_n)$, then $\C\setminus V(K_n)$ is a vertex cover of $G$, where $|\C\setminus V(K_n)|< \beta(G)$, a contradiction. If $|\C\cap V(K_n)|= n-1$, then without loss of generality, we can assume that $x_1\notin \C$. In this case, $y_1\in \C$ and hence, $\C'= \C\setminus (V(K_n)\cup \{y_1\})$ is a vertex cover of $G$ such that $|\C'|<\beta(G)$, again a contradiction. Thus we must have $\beta(\Gamma)=\beta(G)+n$, as desired.

    $(2)$ First, let us assume that $I(G)^{\{\beta(G)\}}=\left\l\x_{V(G)} \right \r$. Then by \Cref{lem: symb principal}, for any $u\in V(G)$, there is a vertex cover $\C$ of $G$ such that $u\in \C$ and $|\C|=\beta(G)$. Consequently, $\C'=\C\cup \{x_1, \dots , x_n\}$ is a minimal vertex cover of $\Gamma$ with $u\in \C'$ and $|\C'|=\beta(G)+n=\beta(\Gamma)$. Note that $x_i\in \C'$ for each $ i\in [n]$. Now take any $y_i\in V(\Gamma)$. From our assumption and by \Cref{lem: symb principal}, there is a minimal vertex cover $\C_1$ of $G$ such that $v\in \C_1$ and $|\C_1|=\beta(G)$. Then one can see that $\C_1'=\C_1\cup \{x_1, \dots , x_{i-1}, y_i, x_{i+1}, \dots , x_n\}$ is a vertex cover of $\Gamma$ such that $|\C_1'|=|\C_1|+n=\beta(G)+n=\beta(\Gamma)$. Hence, $\C_1'$ is a minimal vertex cover of $\Gamma$ where $y_i\in \C_1'$ and $|\C_1'|=\beta(\Gamma)$. Thus, using \Cref{lem: symb principal} we have that $I(\Gamma)^{\{\beta(\Gamma)\}}=\left\l\x_{V(\Gamma)} \right \r$. The converse part follows from the repeated use of \Cref{lem: attach K_n}.
\end{proof}

\begin{lemma}\label{aux lemma}
    Let $G$ be a graph and $\{x,y_1\},\{x,y_2\}\in E(G)$ with $\deg_G(y_1)=\deg_G(y_2)=1$. Then $I(G)^{\{\beta(G)\}}\neq \left\l\x_{V(G)}\right\r$.
\end{lemma}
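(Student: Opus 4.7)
The plan is to apply the criterion in Lemma~\ref{lem: symb principal} and derive a contradiction. Specifically, if we had $I(G)^{\{\beta(G)\}} = \left(\x_{V(G)}\right)$, then by that lemma there would exist a minimum vertex cover $C$ of $G$ with $y_1 \in C$ and $|C| = \beta(G)$. The goal is to show that such a $C$ cannot exist, by constructing a strictly smaller vertex cover from it.

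First, I would use the minimality of $C$ to pin down its intersection with $\{x, y_1, y_2\}$. Since $y_1 \in C$ and $C$ is a minimal vertex cover, there must be some edge incident to $y_1$ whose other endpoint lies outside $C$. Because $\deg_G(y_1) = 1$ and the unique neighbor of $y_1$ is $x$, this forces $x \notin C$. In turn, the edge $\{x, y_2\} \in E(G)$ must be covered by its other endpoint, so $y_2 \in C$.

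Next, I would perform the swap $C' \coloneqq \left(C \setminus \{y_1, y_2\}\right) \cup \{x\}$. Since $y_1, y_2 \in C$ and $x \notin C$, we get $|C'| = |C| - 1 = \beta(G) - 1$. To verify that $C'$ is still a vertex cover, take any edge $e \in E(G)$: if $e$ is incident to $x$, then $x \in C'$ covers it; otherwise $e$ is not incident to $x$, hence also not incident to $y_1$ or $y_2$ (since these have $x$ as their only neighbor), so the endpoint of $e$ that lay in $C$ lies in $C \setminus \{y_1, y_2\} \subseteq C'$ and still covers $e$. This produces a vertex cover of size $\beta(G) - 1$, contradicting the definition of $\beta(G)$.

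There is no substantive obstacle here: the argument is essentially a minimality-plus-swap routine, and the only care required is in checking that every edge remains covered after replacing the two leaves by their common neighbor.
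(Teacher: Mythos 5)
Your proof is correct and follows essentially the same route as the paper: invoke Lemma~\ref{lem: symb principal} to produce a minimal vertex cover $C$ of size $\beta(G)$ containing $y_1$, deduce $x\notin C$ and $y_2\in C$, and then swap the two leaves for $x$ to obtain a vertex cover of size $\beta(G)-1$, a contradiction. You merely spell out two details the paper leaves implicit (why minimality forces $x\notin C$, and why $C'$ still covers every edge), both correctly.
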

\begin{proof}
    On the contrary, let us assume that $I(G)^{\{\beta(G)\}}= \left\l\x_{V(G)}\right\r$. Then by \Cref{lem: symb principal}, there is a minimal vertex cover $C$ of $G$ such that $y_1\in C$ and $|C|=\beta(G)$. Note that $x\notin C$, and hence $y_2\in C$. Now, $C'=(C\setminus \{y_1,y_2\})\cup \{x\}$ is a vertex cover of $G$ of cardinality $|C'|=\beta(G)-1$, which is a contradiction.
\end{proof}

The following lemma is again an application of \Cref{lem: symb principal}, and the proof of this is similar.

\begin{lemma}\label{lem: complete whisker}
The following results hold for the graph classes constructed from the complete graph.
    \begin{enumerate}
        \item[$(1)$] If $G=K_n$ with $n\geq 2$, then $\beta(G)=n-1$ and  $I(G)^{\{\beta(G)\}}=\left\l\x_{V(G)}\right\r$.
        \item[$(2)$] If $G=W(K_n)$ with $ n\geq 2$, then $I(G)^{\{\beta(G)\}}=\left\l\x_{V(G)}\right\r$.
        \item[$(3)$] Let $n\ge 2$ and $r<n$ be two positive integers. Let $G$ be a graph on $n+r$ vertices with $V(G)=V(K_n)\sqcup \{y_1,\dots , y_r\}$, and $E(G)=E(K_n)\sqcup \{\{u_1,y_1\},\dots , \{u_r, y_r\}\}$, where $u_i\in V(K_n)$ are distinct vertices for $ i\in [r]$. Then $I(G)^{\{\beta(G)\}}\neq \left\l\x_{V(G)}\right\r$.
                
    \end{enumerate}
\end{lemma}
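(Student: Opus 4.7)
The plan is to apply \Cref{lem: symb principal} in each of the three cases, translating the principal-generation property into a combinatorial statement about whether every vertex lies in some minimum-size minimal vertex cover.

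For (1), the minimal vertex covers of $K_n$ are exactly the sets $V(K_n)\setminus\{z\}$ for $z\in V(K_n)$, which immediately gives $\beta(K_n)=n-1$. For any vertex $x$, choosing any $y\ne x$ produces the minimal vertex cover $V(K_n)\setminus\{y\}$ of size $n-1$ containing $x$, and \Cref{lem: symb principal} yields $I(K_n)^{\{n-1\}}=(\x_{V(K_n)})$.

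For (2), the $n$ whisker edges $\{x_i,y_i\}$ form a matching, so $\beta(W(K_n))\ge n$; since $V(K_n)$ is itself a vertex cover of size $n$, equality holds. To verify the hypothesis of \Cref{lem: symb principal}, I would use $V(K_n)$ as a minimal vertex cover of size $n$ containing each $x_i$ (minimality since removing $x_j$ uncovers $\{x_j,y_j\}$), and the swap $(V(K_n)\setminus\{x_i\})\cup\{y_i\}$ for each $y_i$ (also of size $n$ and minimal for the same reason).

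For (3), since $r<n$ there exists $v\in V(K_n)\setminus\{u_1,\dots,u_r\}$, and $V(K_n)\setminus\{v\}$ is a vertex cover of $G$ of size $n-1$; combined with the obvious bound $\beta(G)\ge n-1$ coming from the induced $K_n$, this gives $\beta(G)=n-1$. To conclude $I(G)^{\{n-1\}}\ne(\x_{V(G)})$ via \Cref{lem: symb principal}, I would show that $y_1$ lies in no minimal vertex cover of size $n-1$: if $C$ is a minimal vertex cover with $y_1\in C$, minimality forces $u_1\notin C$ (otherwise $C\setminus\{y_1\}$ would still cover $\{u_1,y_1\}$), and then covering the edges of $K_n$ incident to $u_1$ forces $V(K_n)\setminus\{u_1\}\subseteq C$, so $|C|\ge n>n-1$.

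The main subtlety is in part (3): one must pinpoint the right vertex whose inclusion obstructs small minimal covers. The extra whiskers are harmless for building a cover of size $n-1$ (one omits a vertex outside $\{u_1,\dots,u_r\}$, possible precisely because $r<n$), but any whisker endpoint $y_i$ forces its neighbor $u_i$ out of the cover, which in turn drags all of $V(K_n)\setminus\{u_i\}$ in, pushing the size up to $n$.
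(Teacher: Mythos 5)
Your proof is correct and follows exactly the route the paper intends: the paper omits the argument, stating only that the lemma is "again an application of \Cref{lem: symb principal}", and your verification of the vertex-cover criterion in each case (including the key obstruction in (3), that $y_1$ forces $u_1$ out and hence all of $V(K_n)\setminus\{u_1\}$ in) is precisely that application, carried out correctly.
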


We now recall the notion of a special block in a block graph $G$. Let $\B_G$ denote the set of all blocks in $G$. Following \cite{ChauDasRoySahaSqfOrd}, for $B\in\Bl$ and $u\in V(B)$, we define $\N_G(B,u)\coloneqq\{D\in\Bl\mid V(D)\cap V(B)=\{u\}\}$. Using these notations, a special block is defined in the following way.

\begin{definition}\label{special block}
	Let $G$ be a block graph, and $L$ a block of $G$ with $V(L)=\{u_1,\ldots,u_d\}$. We say that $L$ is a \emph{special block} of $G$ if $L$ is one of the following types.
	\begin{enumerate}[leftmargin=2cm]
		\item[{\bf Type I}] $d\le 2$, and $\N_G(L,u_i)=\emptyset$ for each $i\in[d]$.
		\item[{\bf Type II}] $d\ge 3$, and $\N_G(L,u_i)=\emptyset$ for each $i\in[d-1]$.
		\item[{\bf Type III}] $d\ge 2$, and there exists some $i\in[d-1]$ such that $\N_G(L,u_i)\neq\emptyset$. Moreover, if $\N_G(L,u_i)\neq\emptyset$ for some $i\in[d-1]$, then for each $D\in \N_G(L,u_i)$, $D\cong K_2$.
	\end{enumerate}
\end{definition}

\begin{lemma}\label{special block existence lemma}\cite[Lemma 4.10]{ChauDasRoySahaSqfOrd}
	Let $G$ be a block graph. Then, $G$ contains at least one special block.
\end{lemma}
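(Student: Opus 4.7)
The plan is to analyze the block structure of $G$ via its block-cut tree $T$: the bipartite tree whose vertex set is the disjoint union of all blocks of $G$ and all cut vertices of $G$, with an edge joining a block $B$ to a cut vertex $c$ precisely when $c\in V(B)$. I will call a block of $G$ \emph{big} if it has at least three vertices, and let $T'$ denote the minimal subtree of $T$ whose vertex set contains every big block of $G$ (the Steiner subtree whose terminals are the big blocks); by convention $T'=\emptyset$ if $G$ has no big blocks. A key structural remark is that the leaves of $T'$ (when $T'$ is nonempty) are necessarily big blocks themselves, because every cut vertex of $G$ has degree at least $2$ in $T$, and any $K_2$-block lying on a $T$-path between two big blocks is forced to have degree at least $2$ in $T'$.

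First I would dispatch the trivial cases. If $G$ consists of a single block $L$, then $L$ has no cut vertex, so $L$ is of Type~I when $|V(L)|\le 2$ and of Type~II when $|V(L)|\ge 3$. If $T'=\emptyset$, then every block of $G$ is a $K_2$ and each connected component of $G$ is a tree; picking any leaf vertex $v$ of a nontrivial tree component with unique neighbor $u$, the block $L=\{u,v\}$ with labeling $u_1=u$, $u_2=v$ is of Type~III, because every block in $\N_G(L,u_1)$ is a $K_2$; a single-edge component is already of Type~I.

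The main case is $T'\neq\emptyset$, where I would pick a leaf $M$ of $T'$ and prove that $M$ is special. Write $V(M)=\{u_1,\ldots,u_d\}$ with $d\ge 3$. Because $M$ is a leaf of $T'$, at most one cut vertex of $M$ can have a further big block attached outside of $M$; this \emph{exceptional} cut vertex, if it exists, corresponds to the unique neighbor of $M$ in $T'$, and I would label it as $u_d$. Every other cut vertex $c$ of $M$ has only $K_2$-blocks directly attached, since any big block attached at $c$ would force $c$ and that big block into $T'$, contradicting that $M$ is a leaf of $T'$. If $M$ has at least one cut vertex other than the exceptional one, labeling one such vertex as $u_1$ yields Type~III; otherwise $\N_G(M,u_i)=\emptyset$ for every $i\in[d-1]$, yielding Type~II.

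The principal obstacle is making precise the Steiner-tree observation that the leaves of $T'$ must be big blocks and that every non-exceptional cut vertex of $M$ leads only into $K_2$-blocks. This reduces to a careful tree-theoretic bookkeeping argument: internal nodes of a Steiner subtree have degree at least $2$ in that subtree, and any cut vertex of $M$ not lying in $T'$ has no big block reachable from it inside $G\setminus M$, so by the bipartite structure of $T$ only $K_2$-blocks can be directly adjacent to it. Once these structural observations are formalized, the case analysis above covers every possibility and yields the desired special block.
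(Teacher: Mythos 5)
The paper does not actually prove this lemma here: it is imported verbatim from \cite[Lemma 4.10]{ChauDasRoySahaSqfOrd}, so there is no in-document argument to compare yours against. Judged on its own, your Steiner-subtree argument is correct in substance. The two key facts you rely on are both valid: the leaves of the minimal subtree $T'$ of the block-cut tree spanning the big blocks are themselves big blocks (a non-terminal leaf could be pruned, contradicting minimality), and for a leaf block $M$ of $T'$ every cut vertex of $M$ other than its unique $T'$-neighbour can only carry $K_2$-blocks, since a big block $D$ meeting $M$ in a vertex $c'$ would force the path $D$--$c'$--$M$ into $T'$ and give $M$ a second neighbour. Labelling the exceptional cut vertex as $u_d$ then lands you exactly in Type II or Type III, and your handling of the cases ``one block'', ``no big blocks'', and ``single node $T'$'' is consistent with \Cref{special block}.

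Three small loose ends are worth tightening. First, $G$ need not be connected, in which case $T$ is a forest and ``the minimal subtree containing every big block'' need not exist; the fix is to run the whole argument inside one connected component, noting that $\N_G(L,u)$ only involves blocks of the same component, so a special block of a component is special in $G$. Second, your case $T'=\emptyset$ assumes some component has an edge; if every component is an isolated vertex, each such vertex is a block with $d=1\le 2$ and (under the understood convention that $L\notin\N_G(L,u)$) is of Type I. Third, when $T'$ consists of the single node $M$ there is no neighbour to label $u_d$; you flag this with ``if it exists,'' but it deserves one explicit sentence: in that case every attached block anywhere on $M$ is a $K_2$, so any labelling gives Type II or Type III. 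None of these affects the core of the argument.
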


The inequalities concerning the $k$-independence number described in the next two lemmas are heavily used in the proof of the main theorem (\Cref{thm: block graph main}) of this section. 
	\begin{lemma}\label{lem: ind lemma 1}
		Let $G$ be a block graph and let $L$ be a special block of $G$ with $V(L)=\{u_1,\dots , u_d\}$. Assume that $L$ is a special block of either Type I with $d\ge 2$ or Type III with $d\geq 3$, or Type II. Then for any $1\leq s\leq d$ and $k\geq s$,
		\[
		\adms(G\setminus L, k-s+1)\leq \adms(G,k)-1,    
		\]
		where $G\setminus L$ is the induced subgraph of $G$ on the vertex set $V(G)\setminus V(L)$.
	\end{lemma}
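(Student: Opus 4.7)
The plan is to start from a $(k-s+1)$-admissible set $C' = \bigsqcup_{i=1}^{r} C_i'$ of $G \setminus L$ that realizes $\adms(G \setminus L, k - s + 1)$, and to construct a $k$-admissible set $C$ of $G$ with $\alpha(G[C]) \geq \alpha((G \setminus L)[C']) + 1$. The construction will adjoin a carefully chosen subset $A \subseteq V(L)$ to $C'$, either as a new part of the partition of $C$ or by merging $A$ with some of the $C_j'$'s into a single part.

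The first key step is a structural observation: the hypothesis on $L$ forces
\[
Y := \{u \in V(L) : N_G(u) \cap C' \neq \emptyset\} \subseteq \{u_d\}.
\]
Indeed, for Type I with $d = 2$ the block $L$ is disjoint from $G \setminus L$, so $Y = \emptyset$; for Type II the vertices $u_1, \ldots, u_{d-1}$ have $\N_G(L, u_i) = \emptyset$, so they have no neighbors outside $V(L)$; and for Type III with $d \geq 3$, any outside neighbor of $u_i$ with $i \le d - 1$ lies in a pendant $K_2$ block and is thus isolated in $G \setminus L$, hence cannot appear in any $C_j'$ by condition~(4) of \Cref{def: sym aim for graph}. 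In particular, $|V(L) \setminus Y| \geq d - 1 \geq 2$ whenever $d \geq 3$, and $|V(L) \setminus Y| = 2$ in the Type I case.

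Using this, I split into three cases. (I) If $1 \leq s \leq d - 1$, choose $A \subseteq V(L) \setminus \{u_d\}$ with $|A| = \max(2, s)$ and take $C_{r+1} = A$ as a new part of $C = C' \cup A$. (II) If $s = d$ and $u_d \notin Y$, set $A = V(L)$ and again take $C_{r+1} = A$. (III) If $s = d$ and $u_d \in Y$ (only possible in Types II/III with $d \geq 3$), let $\sigma := \sum_i \beta(G[C_i'])$: when $\sigma \geq k - s + 2$, take $A = V(L) \setminus \{u_d\}$ as a new part; when $\sigma = k - s + 1$, instead merge $V(L)$ with $\bigcup_{j \in J} C_j'$ (where $J := \{j : C_j' \cap N_G(u_d) \neq \emptyset\}$) into a single part $C_*$, keeping the remaining $C_i'$'s unchanged. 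In every case the conditions of \Cref{def: sym aim for graph} hold: (1)--(2) are immediate from the observation on $Y$; (3) follows from substituting the bounds $k - s + 1 \leq \sigma \leq r + k - s$ into the explicit expression for $\sum_i \beta(G[C_i])$; and (4) is given by \Cref{lem: complete whisker}(1) for a newly-formed clique in the first three subcases, and by \Cref{lem: attach K_n}(2) (using $d \geq 3$) applied to $C_*$ in the last subcase, where $G[\bigcup_{j \in J} C_j']$ is a disjoint union whose highest squarefree symbolic power is principal because $(I_1 + I_2)^{\{\beta_1 + \beta_2\}} = I_1^{\{\beta_1\}} I_2^{\{\beta_2\}}$ for squarefree ideals in disjoint variables. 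An additivity calculation then yields $\alpha(G[C]) = \alpha((G \setminus L)[C']) + 1$.

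The main obstacle is the merging subcase in (III): to make condition~(3)'s upper bound hold after the part count drops from $r$ to $r - |J| + 1$, one needs exactly $\sigma = k - s + 1$ so that $\sum_i \beta(G[C_i]) = k$; this is precisely why the branching on $\sigma$ is introduced and why the alternative construction $A = V(L) \setminus \{u_d\}$ is used when $\sigma > k - s + 1$. A second technical point is verifying the principal condition for $C_*$ via \Cref{lem: attach K_n}(2), for which the hypothesis $d \geq 3$ is essential.
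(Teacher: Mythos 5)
Your proposal is correct and follows essentially the same strategy as the paper's own proof: start from an optimal $(k-s+1)$-admissible set of $G\setminus L$, adjoin a clique contained in $V(L)$ as a new part of the partition, and in the boundary case $s=d$ with $\sum_i\beta(G[C_i'])=k-d+1$ (where adding a new part would violate the upper bound in condition (3)) merge $V(L)$ with existing parts and certify condition (4) via \Cref{lem: attach K_n}(2), using $d\ge 3$. The differences are cosmetic: you make the observation that only $u_d$ can have neighbours in the old admissible set explicit, you branch additionally on whether $u_d$ has such neighbours, and in the merging subcase you absorb only the parts meeting $N_G(u_d)$ rather than collapsing all of $C'$ into a single part as the paper does; both variants check conditions (1)--(4) in the same way, and your choice $|A|=\max(2,s)$ in fact repairs the degenerate situation $s=1$ where a single-vertex part would violate condition (1). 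One small slip to fix: in your case (I), for a Type I block ($d=2$) with $s=1$ you demand $A\subseteq V(L)\setminus\{u_d\}$ with $|A|=2$, which is impossible; the intended choice is $A=V(L)=\{u_1,u_2\}$, which is legitimate precisely because you showed $Y=\emptyset$ for Type I, and coincides with the paper's Case I.
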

	\begin{proof}
		Let $C = \bigsqcup_{i=1}^r C_i$ be a $(k - s + 1)$-admissible set of $G \setminus L$ such that 
		\[
		\adms(G \setminus L, k - s + 1) = \ind(G[C]).
		\]
		Then by Condition (3) of \Cref{def: sym aim for graph}, $k - s + 1 \leq \sum_{i=1}^{r} \beta(G[C_i]) \leq r + k - s.$ We divide the proof in two cases.

		\noindent
		\textbf{Case I:} Assume that $L$ is a special block of Type I with $d= 2$. In this case, $s\in \{1,2\}$. We sketch the proof for the $s=1$ case only, as the case $s=2$ can be proved by similar arguments. Take $C_{r+1}=\{u_1,u_{2}\}$ and set $C'=\bigsqcup_{i=1}^{r+1} C_i$. Then from the structure of $G$ we have $E(G[C_{r+1}])\neq\emptyset$, and if $e\in E(G[C'])$, then $e\in E(G[C_i])$ for some $i\in [r+1]$. Moreover, $G[C_{r+1}]$ is a connected component of $G$ with $\beta(G[C_{r+1}])=1$ and hence,
		\[
		I(G[C_{r+1}])^{\{\beta(G[C_{r+1}])\}} =\l \x_{C_{r+1}}\r.
		\] 
		Observe that 
\[		\beta(G[C']) = \sum_{i=1}^{r+1} \beta(G[C_i]) = \beta(G[C]) + 1,
\]
		and thus, 
        \[
        k+1 \leq \sum_{i=1}^{r+1} \beta(G[C_i]) \leq r + k.
        \]
        From this, it follows that
        \[
        k \leq \sum_{i=1}^{r+1} \beta(G[C_i]) \leq (r+1) + k - 1.
        \]
        Therefore, $C'$ is a $k$-admissible set of $G$. Note that
		\[
		\ind(G[C']) = |C'| - \beta(G[C']) = |C| + 2 - \beta(G[C]) + 1 = \ind(G[C]) + 1.
		\]
		This proves the inequality $\adms(G\setminus L,k) \leq \adms(G,k) - 1$. 
		
		\noindent
		\textbf{Case II:} Assume that $L$ is a special block of Type III with $d\geq 3$ or Type II. Then we have the following two subcases. 
		
		\noindent
		\textbf{Case IIA:} Let us assume first that $s<d$. In this case we take $C_{r+1} \subseteq \{u_1, \dots, u_{d-1}\}$ such that $|C_{r+1}| = s$ and set $C'=\bigsqcup_{i=1}^{r+1} C_i$. Again, by the structure of $G$ we have $E(G[C_{r+1}])\neq\emptyset$, and if $e\in E(G[C'])$, then $e\in E(G[C_i])$ for some $i\in [r+1]$. Note that $G[C_{r+1}]$ is a complete graph with $\beta(G[C_{r+1}]) = s - 1,$ and by \Cref{lem: complete whisker}, $I(G[C_{r+1}])^{\{s - 1\}} = \l \x_{C_{r+1}}\r$. Further, observe that 
        \[
        k \leq \sum_{i=1}^{r+1} \beta(G[C_i]) \leq r + k - 1 < (r + 1) + k - 1.
        \]
		Thus, $C' = \bigsqcup_{i=1}^{r+1} C_i$ is a $k$-admissible set of $G$. Now,
		\begin{align*}
			\ind(G[C'])=|C'|-\beta(G[C'])=|C|-\beta(G[C])+1=\ind(G[C])+1=\adms(G\setminus W,k-s+1)+1
		\end{align*}
		This shows that whenever $s\neq d$, we have $\adms(G\setminus W,k-s+1)\leq \adms(G,k)-1$.
		
		\noindent
		\textbf{Case IIB:} Consider the remaining part, i.e., $s = d$. Then we have
		\[
		k - d + 1 \leq \sum_{i=1}^{r} \beta(G[C_i]) \leq r + k - d.
		\]
		Now, if $\sum_{i=1}^{r} \beta(G[C_i]) \geq k - d + 2$, then we can take $C_{r+1} = \{u_1, \dots, u_{d-1}\}$ and set $C'=\bigsqcup_{i=1}^{r+1} C_i$. As before, by the structure of $G$ we have $E(G[C_{r+1}])\neq\emptyset$, and if $e\in E(G[C'])$, then $e\in E(G[C_i])$ for some $i\in [r+1]$. Moreover, since $G[C_{r+1}]$ is a complete graph with $\beta(G[C_{r+1}]) = d - 2$, using \Cref{lem: complete whisker} we have $I(G[C_{r+1}])^{\{d-2\}} = \l \x_{C_{r+1}}\r$. Furthermore, we have the inequalities 
        \[
        k \leq \sum_{i=1}^{r+1} \beta(G[C_i]) \leq r + k - 2 < (r + 1) + k - 1.
        \]
		Thus, $C' = \bigsqcup_{i=1}^{r+1} C_i$ is a $k$-admissible set of $G$, and consequently, $\ind(G[C'])=\ind(G[C]) + 1$. This, in turn, implies that $\adms(G \setminus W, k - d + 1) \leq \adms(G, k) - 1$.
		
		Finally, assume that $\sum_{i=1}^{r} \beta(G[C_i]) = k - d + 1$. In this case, we take $C'=\widetilde{C}$ as a single partition, where $\widetilde{C}=C\cup\{u_1,\ldots,u_d\}$. Thus, Conditions (1) and (2) of \Cref{def: sym aim for graph} are automatically satisfied for $C'$. Observe that $I(G[C])^{\{\beta(G[C])\}}=\l \x_{C}\r$ because of Conditions (2) and (4) in \Cref{def: sym aim for graph}.  Now, since $d\ge 3$, using \Cref{lem: attach K_n}, we obtain $\beta(G[C']) = \beta(G[C]) + d - 1$ and $I(G[C'])^{\{\beta(G[C'])\}} =\l \x_{C'}\r$. Thus, in order to show that $C'$ is a $k$-admissible set, it remains to check Condition (4) in \Cref{def: sym aim for graph}. However, since $C'$ is a single partition and $\beta(G[C']) = \beta(G[C]) + d - 1=k$, we have what we desired. Thus, $C'$ is a $k$-admissible set and 
		\[
		\ind(G[C']) 
		= |C'| - \beta(G[C']) = |C| + d - (\beta(G[C]) + d - 1)= \ind(G[C]) + 1.
		\]
		Consequently, $\adms(G\setminus W,k-d+1)\leq \adms(G,k)-1$, in this case too. This completes the proof of the lemma.
        \end{proof}

	\begin{lemma}\label{lem: ind lemma 2}
		Let $G$ be a block graph and $L=\{u_1,\dots , u_d\}$ is a special block of Type III such that for each $i\in [d-1]$, $N_G[u_i]=V(L)\cup \{v_{i,1},\dots , v_{i,t_i}\}$, where $t_i$ is a positive integer. Then for all $i\in [d-1]$ and $k\geq 2$,
		\[
		\adms(G\setminus u_i,k-1)=\adms(G\setminus \{u_i,v_{i,1},\dots , v_{i,t_i}\},k-1)\leq \adms(G,k)-1.
		\]
	\end{lemma}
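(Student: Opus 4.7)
The plan is to establish the equality and inequality separately. For the equality, removing $u_i$ turns each $v_{i,l}$ into an isolated vertex (as $u_i$ was its unique neighbor), and the conditions of Definition~\ref{def: sym aim for graph} preclude isolated vertices from any part of a $(k-1)$-admissible set: Condition~(1) is violated if the vertex sits alone in its part, and Condition~(4) is violated (via Lemma~\ref{lem: symb principal}) if it sits with others, since an isolated vertex cannot lie in any minimum vertex cover. Hence every $(k-1)$-admissible set of $G\setminus u_i$ actually lies in $V(G)\setminus W$ where $W=\{u_i,v_{i,1},\ldots,v_{i,t_i}\}$, and conversely every $(k-1)$-admissible set of $G\setminus W$ is a $(k-1)$-admissible set of $G\setminus u_i$; the equality follows.

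For the inequality, I would take an optimal $(k-1)$-admissible set $C=\sqcup_{l=1}^r C_l$ of $G\setminus W$ with $\alpha(G[C])=\adms(G\setminus W,k-1)$, and build a $k$-admissible set $C^*$ of $G$ with $\alpha(G[C^*])\ge\alpha(G[C])+1$. If no $u_j$ with $j\ne i$ lies in $C$, then $C^*:=C\sqcup\{u_i,v_{i,1}\}$ with the extra part $\{u_i,v_{i,1}\}$ already works: the new part induces the single edge $\{u_i,v_{i,1}\}$ and has no $G$-edge to $C$, since $v_{i,1}$'s only neighbor is $u_i$ and $u_i$'s non-whisker neighbors all lie in $L$, disjoint from $C$; Conditions~(1)-(4) follow in the spirit of Case~I of Lemma~\ref{lem: ind lemma 1}. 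Otherwise, $L\setminus\{u_i\}$ being a clique in $G\setminus W$ forces $S:=C\cap L$ into a single part $C_{j_0}$ by Condition~(2), and I would form $C^*$ by modifying $C_{j_0}$: adjoining $u_i,v_{i,1}$, further enlarging by whiskers $v_{j,1}$ for $u_j\in S\cap\{u_1,\ldots,u_{d-1}\}$ when needed (such whiskers exist by the hypothesis $t_j\ge 1$), and possibly excluding $u_d$ from $C_{j_0}$ if $u_d\in S$. Conditions~(1)-(4) for the modified partition would then be checked via Lemmas~\ref{lem: attach K_n}, \ref{lem: attaching whiskered K_n}, and \ref{lem: complete whisker}.

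The main obstacle is the subcase where $S$ is not contained in any minimum vertex cover of $G[C_{j_0}]$: the naive choice $C^*_{j_0}=C_{j_0}\cup\{u_i,v_{i,1}\}$ then violates Condition~(4), because $v_{i,1}$ falls out of every minimum vertex cover of $G[C^*_{j_0}]$ of the expected size $\beta(G[C_{j_0}])+1$. The hypothesis that each $u_j$ with $j\in[d-1]$ carries a whisker is what rescues the argument: enlarging $C^*_{j_0}$ by the whiskers $v_{j,1}$ produces a whiskered-clique substructure covered by Lemma~\ref{lem: complete whisker}(2), restoring Condition~(4). However, this augmentation simultaneously inflates $\beta(G[C^*_{j_0}])$, and matching the resulting $\sum\beta(G[C^*_l])$ against the upper bound $r+k-1$ required by Condition~(3) demands careful bookkeeping; this balancing of Conditions~(3) and~(4) is the delicate heart of the argument.
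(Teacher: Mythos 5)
Your overall strategy coincides with the paper's: the equality is handled by observing that deleting $u_i$ isolates the whisker vertices $v_{i,l}$ and that an isolated vertex cannot appear in any admissible set (your argument via \Cref{lem: symb principal} is exactly right), and the inequality is proved by augmenting an optimal $(k-1)$-admissible set $C$ of $G\setminus W$ to a $k$-admissible set of $G$ with strictly larger independence number. Your treatment of the case $C\cap V(L)=\emptyset$ (append the new part $\{u_i,v_{i,1}\}$) is complete and correct.

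The gap lies in the remaining case, which is where essentially all of the work is. You correctly place $S=C\cap V(L)$ inside a single part $C_{j_0}$ and propose to modify $C_{j_0}$ by adjoining $u_i,v_{i,1}$, ``further enlarging by whiskers \dots when needed, and possibly excluding $u_d$,'' but you then declare the verification of Conditions (3) and (4) to be ``the delicate heart of the argument'' without carrying it out. That verification \emph{is} the proof: one must check, in each configuration of $C_{j_0}$, that the surgery raises $\sum_{l}\beta(G[C_l])$ by exactly one (so Condition (3) survives with the same number of parts), raises $\alpha(G[C])$ by at least one, and preserves Condition (4) for the modified part. The paper does this by a case analysis on whether $u_d\in C_{j_0}$ and on whether whisker vertices $v_{j,q}$ with $j\neq i$ already lie in $C_{j_0}$ --- a possibility absent from your sketch, and one that requires \Cref{aux lemma} to bound $|C_{j_0}\cap N_G(u_j)\setminus V(L)|\le 1$; it decomposes $C_{j_0}$ into the portion meeting $\bigcup_{j\le d-1}N_G[u_j]$ and the remainder, and applies \Cref{lem: attach K_n}, \Cref{lem: attaching whiskered K_n}, and \Cref{lem: complete whisker} piece by piece. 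In one subcase (where $u_d$ and some $u_j$, $j\le d-1$, both lie in $C_{j_0}$) the paper even resorts to an exchange argument exploiting the optimality of $C$ rather than a direct construction. Until these verifications are written out, your proposal is a correct plan rather than a proof.
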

	\begin{proof}
		Without loss of generality, we may assume that $i=1$. To simplify notations, we set $ G' = G \setminus \{u_1, v_{1,1}, \dots, v_{1,t_1} \}$. It is easy to see that $\adms(G\setminus u_1,k-1)= \adms(G',k-1)$. Thus it remains to show that $\adms(G',k-1)\le \adms(G,k)-1$. Let \( C = \bigsqcup_{i=1}^r C_i \) be a \((k-1)\)-admissible set of \( G' \) such that $\adms(G', k-1) = \ind(G[C])$. Then
		\begin{align}\label{ineq321}
		k - 1 \leq \sum_{i=1}^{r} \beta(G[C_i]) \leq r + (k - 1) - 1.
		\end{align}
		If \( N_G(u_1) \cap C = \emptyset \), then take \( C_{r+1} = \{u_1, v_{1,1}\} \) and set \( C' = \bigsqcup_{i=1}^{r+1} C_i \). Then it follows from the structure of $G$ that $E(G[C_{r+1}])\neq\emptyset$, and if $e\in E(G[C'])$, then $e\in E(G[C_i])$ for some $i\in [r+1]$. Observe that $\beta(G[C_{r+1}]) =1$. Therefore,	
		\[k \leq \sum_{i=1}^{r+1} \beta(G[C_i]) \leq r + k - 1 < (r+1) + k - 1.
		\]
		Thus, \( C' = \bigsqcup_{i=1}^{r+1} C_i \) is a \( k \)-admissible set of \( G \). Note that $\ind(G[C']) = \ind(G[C]) + 1$,
		and therefore, $\adms(G', k - 1)\le \adms(G, k) - 1$, in this case.

		Next, we assume \( N_G(u_1) \cap C \neq \emptyset \). Since $G[\{u_1,\ldots,u_d\}]$ is a complete graph, using Condition (2) of \Cref{def: sym aim for graph}, we can say that there exists some \( \ell \in [r] \) such that
		\[
		N_G(u_1) \cap C_\ell \neq \emptyset \quad \text{and} \quad N_G(u_1) \cap C_j = \emptyset \text{ for all } j \in [r], j \neq \ell.
		\]
		We first fix some notations for the rest of the proof. Let $T=\{u_2,u_3,\dots , u_d\}$. For $ j\in [d-1]$, we set $S_j:= N_G(u_j)\setminus V(L)= \{v_{j,1}, \dots , v_{j,{t_j}}\}$. Note that for each $j\in [d-1]$, by our choice of $L$, $v_{j, q}, 1\leq q\leq t_j$ are leaf vertices for each $q\in [t_j]$. Further, we set $S=\cup_{j=1}^{d-1}S_j$, and $\Lambda =S\cup T$. Then consider the following two cases:
		
		\noindent
		\textbf{Case A.} First we assume that $C_{\ell}\cap S=\emptyset$. Then $\Lambda \cap C_\ell\subseteq T$. We have the following two subcases to consider.
		
		\noindent
		\textbf{Subcase A.1.} $u_d\notin C_{\ell}$. Then using the given hypothesis $C_{\ell}\cap S=\emptyset$ and $ I(G[C_{\ell}])^{\beta(G[C_{\ell}])}=\left\l \x_{C_{\ell}} \right \r$, we can write
		$N_G(u_1) \cap C_\ell=\{u_{i_1},\dots , u_{i_s}\}$, where $s\ge 2$ and $d\notin\{i_1,,\ldots,i_s\}$. Now, take $C_{\ell}'=C_{\ell}\cup \{v_{i_1,1}, v_{i_2, 1},\dots , v_{i_{s},1}\}$ and set
		 \[C''=(\sqcup_{i\in[r], i\neq \ell}C_i)\sqcup C_{\ell}'.\]
		  Observe that we can write $C_{\ell}=A_1\sqcup A_2$, where $A_1=N_G(u_1)\cap C_{\ell}$ and $A_2\subseteq V(G)\setminus (\cup_{i=1}^{d-1}N_G[u_i])$. Moreover, $\beta(G[C_{\ell}])=\beta(G[A_1])+\beta(G[A_2])$, $I(G[A_2])^{\{\beta(A_2)\}}=\l\x_{A_2}\r$ whenever $A_2\neq\emptyset$, and $I(G[A_1])^{\{\beta(A_1)\}}=\l\x_{A_1}\r$. Let us take $A_1'=A_1\cup\{v_{i_1,1}, v_{i_2, 1},\dots , v_{i_{s},1}\}$. Then $G[A_1']$ is a whisker graph, and thus by \Cref{lem: complete whisker}, $\beta(G[A_1'])=s$ and $I(G[A_1'])^{\{\beta(A_1')\}}=\l\x_{A_1'}\r$. Consequently, $ I(G[C_{\ell}'])^{\beta(G[C_{\ell}'])}=\left\l \x_{C_{\ell}'} \right \r$. Also, it follows that $e\in E(G[C''])$ implies either $e\in E(G[C_i])$ for some $i\in [r]\setminus\{\ell\}$ or $e\in E(G[C_{\ell}'])$. Moreover, since $\beta(G[A_1'])=s=\beta(G[A_1])+1$, from \Cref{ineq321} we have
		\[
		k \leq \left( \sum_{i \in [r], i \neq \ell} \beta(G[C_i]) \right) + \beta(G[C_\ell']) \leq r + k - 1.
		\]
		Thus, \( C'' \) is a \( k \)-admissible set of \( G \) such that $	\ind(G[C'']) = \ind(G[C]) + s-1$. In particular, $\adms(G', k - 1)\le \adms(G, k) - 1$, in this case too.
		
		\noindent
		\textbf{Subcase A.2.} $u_d\in C_{\ell}$.
		
		\noindent
		\textbf{Subcase A.2.1.} $N_G(u_1) \cap C_\ell=\{u_d\}$. In this case we take $C_{\ell}'=C_{\ell}\cup \{u_1, v_{1,1}\}$, and it follows from \Cref{lem: attaching whiskered K_n} that $ I(G[C_{\ell}'])^{\beta(G[C_{\ell}'])}=\l \x_{C_{\ell}'}  \r$, where	$\beta(G[C_{\ell}'])=\beta(G[C_{\ell}])+1$.
		Now, if we set \[
		C''=(\sqcup_{i\in[r], i\neq \ell}C_i)\sqcup C_{\ell}',
		\]
		 then by the construction we see that $e\in E(G[C''])$ implies either $e\in E(G[C_i])$ for some $i\in [r]\setminus\{\ell\}$ or $e\in E(G[C_{\ell}'])$. Furthermore, from \Cref{ineq321} we have 
		 \[
		 k \leq \left( \sum_{i \in [r], i \neq \ell} \beta(G[C_i]) \right) + \beta(G[C_\ell']) \leq r + k - 1.
		 \]
		  Therefore, \( C'' \) is a \( k \)-admissible set of \( G \) such that $\ind(G[C'']) = \ind(G[C]) + 1$. Hence, $\adms(G', k - 1)\le \adms(G, k) - 1$, in this case also. 
		
		\noindent
		\textbf{Subcase A.2.2.} $N_G(u_1) \cap C_\ell=\{u_{i_1},\dots , u_{i_s}, u_d\}$ for some $s\ge 1$. In this case, we proceed to show that $s$ must be $1$. Indeed, if $s\ge 2$, then take $C_{\ell}'=(C_{\ell}\setminus \{u_d\}) \cup \{v_{i_1,1}, v_{i_2, 1},\dots , v_{i_{s},1}\}$ and set \[
		C''=(\sqcup_{i\in[r], i\neq \ell}C_i)\sqcup C_{\ell}'.
		\]
		 Since $ I(G[C_{\ell}])^{\beta(G[C_{\ell}])}=\l \x_{C_{\ell}}\r$, using \Cref{lem: attach K_n} and \Cref{lem: complete whisker} we have $ I(G[C_{\ell}'])^{\beta(G[C_{\ell}'])}=\l \x_{C_{\ell}'}\r$, where $\beta(G[C_{\ell}'])=\beta(G[C_{\ell}])$. It is easy to see that $e\in E(G[C''])$ implies either $e\in E(G[C_i])$ for some $i\in [r]\setminus\{\ell\}$ or $e\in E(G[C_{\ell}'])$. Thus,  \( C'' \) is a \( (k-1) \)-admissible set of \( G'' \) such that \[
		 \ind(G[C'']) = \ind(G[C]) + s-1> \ind(G[C]),
		 \] 
		 a contradiction to the fact that $\adms(G', k-1) = \ind(G[C])$. Thus, we have $s=1$. In this case we take $C_{\ell}'=(C_{\ell}\setminus \{u_d\}) \cup \{v_{i_1, 1}, u_1, v_{1,1}\}$, and set
		  \[
		 C''=(\sqcup_{i\in[r], i\neq \ell}C_i)\sqcup C_{\ell}'.
		 \]
		 Then, proceeding as before we see that $C''$ is a $k$-admissible set of $G$ such that $\ind(G[C'']) = \ind(G[C]) + 1$, and consequently, $\adms(G', k - 1)\le \adms(G, k) - 1$, in this case.

		\noindent
		\textbf{Subcase B.} Assume that $C_{\ell}\cap S\neq \emptyset$. In this case, by \Cref{aux lemma} we find that $|C_{\ell}\cap S_j|\le 1$ for each $j\in[d-1]$. Based on this observation, we have the following two subcases to consider. 
		
		\noindent
		\textbf{Subcase B.1.} $u_d\notin C_{\ell}$. Then by \Cref{lem: complete whisker}(3) and the fact that $C_{\ell}\cap S\neq \emptyset$, we find that $G[C_{\ell}\cap \Gamma]$ is a complete whisker graph. Thus if we take $C_{\ell}'=C_{\ell} \cup \{u_1, v_{1,1}\}$, and set
		\[
		C''=(\sqcup_{i\in[r], i\neq \ell}C_i)\sqcup C_{\ell}',
		\]
		then from \Cref{lem: complete whisker}(2) we have $	\beta(G[C_{\ell}'])=\beta(G[C_{\ell}])+1$ and $I(G[C_{\ell}'])^{\beta(G[C_{\ell}'])}= \l \x_{C_{\ell}'} \r$.
Hence, 
		\[
		k \leq \left( \sum_{i \in [r], i \neq \ell} \beta(G[C_i]) \right) + \beta(G[C_\ell']) \leq r + k - 1.
		\]
		Also, observe that $e\in E(G[C''])$ implies either $e\in E(G[C_i])$ for some $i\in [r]\setminus\{\ell\}$ or $e\in E(G[C_{\ell}'])$. Thus, \( C'' \) is a \( k \)-admissible set of \( G \) such that
		$\ind(G[C'']) = \ind(G[C]) + 1$, and consequently, $\adms(G', k - 1)\le \adms(G, k) - 1$, in this case also.
		
		\noindent
		\textbf{Subcase B.2.} $u_d\in C_{\ell}$. In this case, without loss of generality, we can assume that $\Lambda \cap C_{\ell}= \{ u_2, \dots , u_s, v_{2,1}, \dots , v_{t,1}, u_d\}$, where $2\leq t\leq s\le d-1$.
		
		\noindent
		\textbf{Subcase B.2.1.} $t=s$. Take $C_{\ell}'=C_{\ell} \setminus  \{ u_2, \dots , u_s, v_{2,1}, \dots , v_{s,1}\}$. By repeated applications of \Cref{lem: attach K_n}(2) with $n=2$, we get $\beta(G[C_{\ell}'])=\beta(G[C_{\ell}])-(s-1)$ and $I(G[C_{\ell}'])^{\{\beta(G[C_{\ell}'])\}}= \l \x_{C_{\ell}'} \r$.
		Moreover \( \ind(G[C_{\ell}']) = \ind(G[C]) -(s-1).\) Now, we set $C_{\ell}''=C_{\ell}'\cup \{ u_1, u_2, \dots , u_s, v_{1,1}, v_{2,1}, \dots , v_{s,1}\}$. Then by \Cref{lem: attaching whiskered K_n}, we have $\beta(G[C_{\ell}''])=\beta(G[C_{\ell}])+1$ and $I(G[C_{\ell}''])^{\{\beta(G[C_{\ell}''])\}}=\l \x_{C_{\ell}''} \r$.
		Moreover, \( \ind(G[C_{\ell}'']) = \ind(G[C]) +1\). Taking \(C''=(\sqcup_{i\in[r], i\neq \ell}C_i)\sqcup C_{\ell}''\), we see that $e\in E(G[C''])$ implies either $e\in E(G[C_i])$ for some $i\in [r]\setminus\{\ell\}$ or $e\in E(G[C_{\ell}''])$. Furthermore, from \Cref{ineq321} we get
		\[
		k \leq \left( \sum_{i \in [r], i \neq \ell} \beta(G[C_i]) \right) + \beta(G[C_\ell'']) \leq r + k - 1.
		\]
		Thus, \( C'' \) is a \( k \)-admissible set of \( G \) such that $\ind(G[C'']) = \ind(G[C]) + 1$, and hence $\adms(G', k - 1)\le \adms(G, k) - 1$, in this case.
		
		\noindent
		\textbf{Subcase B.2.2.} $t<s$. Take $C_{\ell}^{1}=C_{\ell} \setminus  \{ u_2, \dots , u_t, v_{2,1}, \dots , v_{t,1}\}$. By repeated applications of \Cref{lem: attach K_n}(2) with $n=2$, we get $\beta(G[C_{\ell}^{1}])=\beta(G[C_{\ell}])-(t-1)$ and $I(G[C_{\ell}^{(1)}])^{\{\beta(G[C_{\ell}^{1}])\}}=\l \x_{C_{\ell}^{1}} \r$.
		Moreover \( \ind(G[C_{\ell}^{(1)}]) = \ind(G[C_{\ell}]) -(t-1)\). Next, we take  $C_{\ell}^{2}=C_{\ell}^{1} \setminus  \{ u_{t+1}, \dots , u_s,u_d\}$. Then by \Cref{lem: attach K_n}(2) it follows that 
		\[
		\beta(G[C_{\ell}^{2}])=\beta(G[C_{\ell}^{1}])-(s-t)=\beta(G[C_{\ell}])-s+1,\]
		and $I(G[C_{\ell}^{2}])^{\beta(G[C_{\ell}^{2}])}=\l \x_{C_{\ell}^{2}}\r$. Moreover, \[
		\ind(G[C_{\ell}^{2}]) = \ind(G[C_{\ell}^{1}]) -1=\ind(G[C_{\ell}]) -t.
		\]
		Finally, we take $C_{\ell}^{3}=C_{\ell}^{2}\sqcup \{ u_1, \dots , u_s, v_{1,1}, \dots , v_{s,1}\}$. Then it follows from \Cref{lem: attaching whiskered K_n} that 
		\[
		\beta(G[C_{\ell}^{3}])=\beta(G[C_{\ell}^{2}])+s=\beta(G[C_{\ell}])+1,
		\]
		and $I(G[C_{\ell}^{3}])^{\beta(G[C_{\ell}^{3}])}=\l \x_{C_{\ell}^{3}} \r$. Moreover, 
		\[ 
		\ind(G[C_{\ell}^{3}]) = \ind(G[C_{\ell}^{2}])+s=\ind(G[C_{\ell}])+ (s-t).
		\]
		Next, we take \(C''=(\sqcup_{i\in[r], i\neq \ell}C_i)\sqcup C_{\ell}^{3}\), then it follows that 
		\[
		k \leq \left( \sum_{i \in [r], i \neq \ell} \beta(G[C_i]) \right) + \beta(G[C_\ell^{3}]) \leq r + k - 1.
		\]
		observe from the structure of $G$ that  if $e\in E(G[C''])$, then either $e\in E(G[C_i])$ for some $i\in [r]\setminus\{\ell\}$ or $e\in E(G[C_{\ell}^3])$. Thus, \( C'' \) is a \( k \)-admissible set of \( G \) such that
		\[
		\ind(G[C'']) = \ind(G[C]) + (s-t)\geq \ind(G[C])+1.
		\]
Hence, $\adms(G', k - 1)\le \adms(G, k) - 1$, in this case too. This completes the proof of the lemma.	
\end{proof}

	We are now in a position to prove the main theorem of this section.
	
	\begin{theorem}\label{thm: block graph main}
		Let $G$ be a block graph on $n$ vertices and let $L$ be a special block with $V(L)=\{u_1,\dots , u_d\}$. Then, for each $1\le k\le \nu(G)$, we have the following:
		\begin{enumerate}
			\item $\reg(I(G)^{\{k\}})\le \adms(G,k)+k$.
			\item If $\N_G(L,u_i)= \emptyset$ for some $i\in [d-1]$, then $\reg(I(G)^{\{k\}}:u_i)\le \adms(G,k)+k-1$.
			\item If $\N_G(L,u_i)\neq \emptyset$ for all $i\in [d-1]$, then 
			\[
			\reg(I(G)^{\{k\}}:u_i)\le \adms(G,k)+k-1,
			\]
			for all $i\in [d-1]$.
		\end{enumerate}
		Consequently, 
		\[
		\reg(I(G)^{\{k\}})=\adms(G,k)+k.
		\]
	\end{theorem}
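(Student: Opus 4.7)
The lower bound $\reg(I(G)^{\{k\}}) \ge \adms(G,k) + k$ is immediate from \Cref{cor:lower bound sqf symbolic}, so the content of the theorem is the upper bound (1) and the two colon-ideal estimates (2) and (3). The plan is to prove these three statements simultaneously by induction on $|V(G)|$, with hand-verified base cases (isolated vertices, isolated edges, small $n$). In the inductive step I invoke \Cref{special block existence lemma} to pick a special block $L = \{u_1,\ldots,u_d\}$ of $G$ and branch on its type; statements (2) and (3) are proved first, as they feed (1).

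For (2), if $\N_G(L,u_i) = \emptyset$ for some $i \in [d-1]$ then $u_i$ is simplicial in $G$ with $N_G[u_i] = V(L)$. Applying \Cref{lem: symb sq-free 2} with test set $W = V(L)$ bounds $\reg(I(G)^{\{k\}} : u_i)$ by a maximum, over partitions $V(L) = U_1 \sqcup U_2$ with $u_i \in U_2$, of $\reg(I(G \setminus U_1)^{\{k\}} : \x_{U_2}) + |U_2| - 1$. Since $u_i$ remains simplicial in $G \setminus U_1$ with closed neighborhood $U_2$, \Cref{lem: symb sq-free 1} collapses the inner ideal to $I(G \setminus V(L))^{\{k - |U_2| + 1\}}$, to which the inductive hypothesis applies. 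Adding the $|U_2|-1$ and invoking \Cref{lem: ind lemma 1} with $s = |U_2|$ — which feeds $\adms(G \setminus V(L),\,k - |U_2| + 1) \le \adms(G,k) - 1$ — delivers $\adms(G,k) + k - 1$. Statement (3) follows from the same template with $W = N_G[u_i] = V(L) \cup \{v_{i,1},\ldots,v_{i,t_i}\}$: the non-simplicial nature of $u_i$ in $G$ is compensated by the observation that for each partition of $W$, either the whiskers end up in $U_1$, whence $u_i$ becomes simplicial in $G \setminus U_1$ and the reduction proceeds exactly as in (2) (using \Cref{lem: ind lemma 1}), or some whisker lies in $U_2$, in which case the reduction is routed through $G \setminus u_i$ and \Cref{lem: ind lemma 2} supplies the analogous drop.

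With (2) and (3) in hand, statement (1) follows by applying \Cref{regularity lemma}(iii) with the variable $u_i$:
\[
\reg(I(G)^{\{k\}}) \le \max\bigl\{\reg(I(G)^{\{k\}} : u_i) + 1,\ \reg(I(G)^{\{k\}} + (u_i))\bigr\}.
\]
The first term is at most $\adms(G,k) + k$ by (2) or (3). For the second, the restriction property in \Cref{def:sq-free power like}(c) for squarefree symbolic powers (verified in \Cref{Symbolic SPLF}) gives $I(G)^{\{k\}} + (u_i) = I(G \setminus u_i)^{\{k\}} + (u_i)$; then \Cref{regularity lemma}(i), the inductive hypothesis on $G \setminus u_i$, and \Cref{lem: induced adm} combine to give $\reg(I(G)^{\{k\}} + (u_i)) \le \adms(G,k) + k$. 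The Type I case, in which $L$ is a connected component, is handled separately via the additivity provided by \Cref{reg sum} applied to the disjoint-union splitting of $I(G)^{\{k\}}$.

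The main obstacle is the exhaustive case analysis driving (2) and (3): one must enumerate all partitions $W = U_1 \sqcup U_2$ coming out of \Cref{lem: symb sq-free 2}, certify that $u_i$ is simplicial in $G \setminus U_1$ whenever \Cref{lem: symb sq-free 1} is invoked, and then route the reduction through exactly the right inequality from \Cref{lem: ind lemma 1} or \Cref{lem: ind lemma 2} so that the ``$-1$'' on the $\adms$ side absorbs the ``$+1$'' contributed by \Cref{regularity lemma}(iii). The degenerate configurations (Type I with $d = 1$, Type III with $d = 2$, or values of $k$ near the boundary for which the parameter $k - |U_2| + 1$ drops below~$1$) fall outside the hypotheses of the two $\adms$-lemmas and must be checked directly, typically by reducing to structural identities supplied by \Cref{lem: attach K_n} and \Cref{lem: attaching whiskered K_n}.
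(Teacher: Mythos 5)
Your overall architecture --- simultaneous induction on $|V(G)|$, proving (2) and (3) first and then deducing (1) from \Cref{regularity lemma}(iii) together with $I(G)^{\{k\}}+(u_i)=I(G\setminus u_i)^{\{k\}}+(u_i)$, \Cref{lem: induced adm}, and the induction hypothesis --- is exactly the paper's, and your treatment of (2) with test set $W=V(L)$ coincides with the paper's argument. The gap is in (3). Applying \Cref{lem: symb sq-free 2} once with the full set $W=N_G[u_i]=V(L)\cup\{v_{i,1},\dots,v_{i,t_i}\}$ produces, for each partition $W=U_1\sqcup U_2$ with $u_i\in U_2$, a term $\reg(I(G\setminus U_1)^{\{k\}}:\x_{U_2})+|U_2|-1$, and for partitions in which $U_2$ contains two or more whisker tips your bookkeeping fails. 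For instance, if $U_2=\{u_i,v_{i,j_1},v_{i,j_2}\}$, then coloning by $u_iv_{i,j_1}$ via \Cref{lem: symb sq-free 1} drops the symbolic power only to $k-1$ (the second whisker tip becomes isolated and contributes nothing further), so the term is $\reg(I(G\setminus N_G[u_i])^{\{k-1\}})+2\le \adms(G\setminus N_G[u_i],k-1)+k+1$; even with the drop $\adms(G\setminus N_G[u_i],k-1)\le\adms(G,k)-1$ supplied by \Cref{lem: ind lemma 2}, this only yields $\adms(G,k)+k$, one more than the target $\adms(G,k)+k-1$. In general the excess is $|U_2\cap\{v_{i,1},\dots,v_{i,t_i}\}|-1$, because the $+(|U_2|-1)$ from \Cref{lem: symb sq-free 2} grows with $|U_2|$ while the power drop (hence the available $\adms$ drop) does not. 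The mixed partitions, where $U_2$ contains both some $u_l$ and some whisker tip, additionally leave a residual colon by $\x_A$ with $A\subseteq V(L)\setminus\{u_i\}$ of a lower symbolic power, which neither \Cref{lem: ind lemma 1} nor \Cref{lem: ind lemma 2} addresses.

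The paper avoids all of this by never using a test set of size greater than two in case (3): it peels off the whiskers of $u_1$ one at a time with $W=\{u_1,v_{1,j}\}$ (so the only nontrivial partition is $U_2=\{u_1,v_{1,j}\}$, where the power drop and $|U_2|-1$ both equal $1$ and \Cref{lem: ind lemma 2} suffices), reducing after $s$ steps to $\reg(I(G\setminus\{v_{1,1},\dots,v_{1,s}\})^{\{k\}}:u_1)$, and then treats the whisker-free graph either by a direct computation with $W=\{u_1,u_2\}$ plus an ad hoc admissibility claim (when $d=2$) or by invoking statement (2) for the smaller graph, in which $u_1$ now satisfies $\N(L,u_1)=\emptyset$ (when $d\ge 3$). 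To repair your version you would need either to adopt this iterative scheme or to prove a strengthened admissibility lemma giving a drop of $|U_2\cap\{v_{i,1},\dots,v_{i,t_i}\}|$ rather than $1$; the latter is not available in the paper.
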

	\begin{proof}
		We first prove (1), (2), and (3) simultaneously by induction on $n$. If $n \leq 2$, then $k = 1$ and thus by \cite[Theorem 6.8]{HaVanTuyl2008} 
		\[
		\reg(I(G)^{\{1\}}) = \nu(G) + 1 = \adms(G,1) + 1.
		\]
		Moreover, (2) is trivially satisfied, and (3) is vacuously true. Therefore, we may assume that $n \geq 3$. For such an $n$, we first consider the case $k = 1$. Then (1) follows directly from \cite[Theorem 6.8]{HaVanTuyl2008}. For (2) and (3), note that $\reg((I(G)^{\{1\}}:u_i))=\reg(I(G\setminus V(L)))$. Thus the inequality $\reg(I(G)^{\{1\}}:u_i)\le \adms(G,1)$ can be verified using \Cref{lem: ind lemma 1}, \Cref{lem: ind lemma 2}, and the induction hypothesis. So, we further assume that $k \geq 2$. We now divide the proof into two cases.

		\noindent
		\textbf{Case A.} Let $L$ be a special block of Type I in $G$. In this case, either $L$ is an isolated vertex or $L \cong K_2$. If $L$ is an isolated vertex of $G$, then by the induction hypothesis and by \Cref{lem: induced adm}, we have
		\begin{align*}
			\reg(I(G)^{\{k\}}) &= \reg(I(G \setminus V(L))^{\{k\}}) \\
			&\le \adms(G \setminus V(L),k) + k \\
			&\le \adms(G,k) + k.
		\end{align*}
		Thus, statement (1) is satisfied, and statements (2) and (3) are vacuously true.
		
		Now, suppose $L \cong K_2$ and $V(L) = \{u_1, u_2\}$. Then, $\N_G(B,u_i)= \emptyset$ for $i=1,2$. Thus, statement (3) is vacuously true. For statement (2), we set $W=\{u_1,u_2\}$. Then by \Cref{lem: symb sq-free 2}, we get
		\begin{align*}
			\reg(I(G)^{\{k\}}:u_1)& \leq \max\{\reg(I(G\setminus u_{2})^{\{k\}}:u_1),\; \reg(I(G)^{\{k\}}:u_1u_2)+1 \}\\
			&= \max\{\reg(I(G\setminus \{u_1,u_2\})^{\{k\}}),\; \reg(I(G\setminus \{u_1,u_2\})^{\{k-1\}})+1 \} \\
			&\leq  \max\{\adms(G\setminus \{u_1,u_2\}, k)+k,\; \adms(G\setminus \{u_1,u_2\}, k-1)+k \},
		\end{align*}
		where the second equality follows from \Cref{lem: symb sq-free 1}, and the third inequality follows from the induction hypothesis of the statement (1). Finally, the assertion $\reg(I(G)^{\{k\}}:u_1) \leq \adms(G,k) +k - 1$ follows from \Cref{lem: ind lemma 1}. This proves statement (2). For statement (1), observe that 
		\begin{align*}
			\reg(I(G)^{\{k\}}+\l u_1\r)&=\reg(I(G\setminus \{u_1,u_2\})^{\{k\}}+\l u_1\r)\\
			&\le\reg(I(G\setminus \{u_1,u_2\})^{\{k\}})\\
			&\le \adms(G\setminus \{u_1,u_2\}, k)+k\\
			&\le \adms(G, k)+k,
		\end{align*}
		where the inequality in the second line follows from \Cref{regularity lemma}(i), the inequality in the third line follows from the induction hypothesis on statement (1), and the last inequality follows from \Cref{lem: induced adm}. We can now use \Cref{regularity lemma}(iii) and statement (2) to conclude that $\reg(I(G)^{\{k\}})\le \adms(G,k)+k$, which proves (1).
		
		\noindent
		\textbf{Case B.} Let us assume that $L$ is a special block of $G$ which is either of Type II or of Type III. Let $V(L)=\{u_1,\dots , u_d\}$, where $d\geq 2$. 
		
		\noindent
		\textbf{Proof of (2):} Let us assume that there is a vertex $u_i\in V(L)$ with $i\in[d-1]$ such that $\N_G(L, u_i)=\emptyset$. Then observe that we must have $d\geq 3$. Without any loss of generality, assume that $i=1$. We set $W=\{u_1,\dots ,u_d\}$. Then by \Cref{lem: symb sq-free 2}, we get
		\begin{equation}\label{eqn: simplicial colon}
			\begin{split}
				\reg(I(G)^{\{k\}}:u_1)&\leq \max \{\reg (I(G\setminus U_1)^{\{k\}}: \mathbf{x}_{U_2})+|U_2|-1\mid u_1\in U_2, U_1\cap U_2=\emptyset, U_1\cup U_2=W\}\\
				&\leq \max \{\reg (I(G\setminus W)^{\{k-|U_2|+1\}})+|U_2|-1\mid u_1\in U_2, U_2\subseteq W\}\\
				&\leq \max \{\adms(G\setminus W, k-|U_2|+1) +k \mid  u_1\in U_2, U_2\subseteq W\} 
			\end{split}
		\end{equation}
		where the inequality in the second line follows from \Cref{lem: symb sq-free 1}, and the inequality in the third line follows from the induction hypothesis of the statement (1). Finally, it follows from \Cref{lem: ind lemma 1} that $\adms(G\setminus W, k-|U_2|+1)\leq \adms(G,k)-1$, for any $U_2\subseteq W$ with $1\leq |U_2|\leq d$. Hence, \Cref{eqn: simplicial colon}, implies that $\reg(I(G)^{\{k\}}:u_1)\leq \adms(G,k)-1$. This completes the proof of (2).

		\noindent
		\textbf{Proof of (3):} Assume that for every $u_i\in V(L), 1\leq i\leq d-1$, $\N_G(L, u_i)\neq \emptyset$. Then, $L$ is a special block of Type III. First, we consider the case when $d=2$. Then $V(L)=\{u_1,u_2\}$, and assume that $N_G(u_1)=\{u_2, v_{1,1},\dots , v_{1,s}\}, s\geq 1$. We set $W=\{u_1, v_{1,1}\}$.  Then by \Cref{lem: symb sq-free 2}, we get
		\begin{align*}
			\reg(I(G)^{\{k\}}:u_1)& \leq \max\{\reg(I(G\setminus v_{1,1})^{\{k\}}:u_1),\; \reg(I(G)^{\{k\}}:u_1v_{1,1})+1 \}\\
			&= \max\{\reg(I(G\setminus v_{1,1})^{\{k\}}:u_1),\; \reg(I(G\setminus \{u_1, v_{1,1}\})^{\{k-1\}})+1 \} \\
			&= \max\{\reg(I(G\setminus v_{1,1})^{\{k\}}:u_1),\; \reg(I(G\setminus \{u_1,v_{1,1},\dots , v_{1,s}\})^{\{k-1\}})+1 \} \;\;\; \\
			&\leq \max\{\reg(I(G\setminus v_{1,1})^{\{k\}}:u_1),\; \adms(G\setminus \{u_1,v_{1,1},\dots , v_{1,s}\},k-1)+k \}
		\end{align*}
		where the equality in the second line follows from \Cref{lem: symb sq-free 1}, and the inequality in the fourth line follows from the induction hypothesis of the statement (1).
		
		\noindent
		Now, by \Cref{lem: ind lemma 2}, it follows that  $\adms(G\setminus \{u_1,v_{1,1},\dots , v_{1,s}\},k-1)\leq \adms(G,k)-1$. Thus, it suffices to show that $\reg(I(G\setminus v_{1,1})^{\{k\}}:u_1)\leq \adms(G,k)+k-1$. For this, we again consider $W=\{u_1,v_{1,2}\}\subseteq V(G\setminus v_{1,1})$, and proceed similarly. After $s$-many steps, we finally obtain 
		\begin{align*}
			\reg(I(G)^{\{k\}}:u_1) &\leq \max\{\reg(I(G\setminus \{v_{1,1}, \dots , v_{1,s}\})^{\{k\}}:u_1), \\ & \hspace{3cm} \adms(G\setminus \{u_1,v_{1,1},\dots , v_{1,s}\},k-1)+k\}.
		\end{align*}
		Thus, it is enough to show that $\reg(I(G\setminus \{v_{1,1}, \dots , v_{1,s}\})^{\{k\}}:u_1)\leq \adms(G,k)+k-1$. Now, we set $G'=G\setminus \{v_{1,1}, \dots , v_{1,s}\}$ and $W=\{u_1,u_2\}\subseteq V(G')$. Then by applying \Cref{lem: symb sq-free 2} again, we get
		\begin{align*}
			\reg(I(G')^{\{k\}}:u_1)& \leq \max\{\reg(I(G'\setminus u_{2})^{\{k\}}:u_1),\; \reg(I(G')^{\{k\}}:u_1u_2)+1 \}\\
			&= \max\{\reg(I(G'\setminus \{u_1,u_2\})^{\{k\}}),\; \reg(I(G'\setminus \{u_1,u_2\})^{\{k-1\}})+1 \} \\
			&\leq  \max\{\adms(G'\setminus \{u_1,u_2\}, k)+k,\; \adms(G'\setminus \{u_1,u_2\}, k-1)+k \}\\
			&=  \max\{\adms(G\setminus \{u_1,u_2, v_{1,1}, \dots , v_{1,s}\}, k)+k,\; \\ & \hspace{5cm} \adms(G\setminus \{u_1,u_2, v_{1,1}, \dots , v_{1,s}\}, k-1)+k \}.     
		\end{align*}
		where the second equality follows from \Cref{lem: symb sq-free 1}, and the third inequality follows from the induction hypothesis of the statement (1).
		
		\noindent
		Let us set $G''=G\setminus \{u_1,u_2, v_{1,1}, \dots , v_{1,s}\}$. We claim that $\adms(G'',k)\leq \adms(G,k)-1$. Let $C=\bigsqcup_{i=1}^r C_i$ be a $k$-admissible set of $G''$ such that 
		\[
		\adms(G'',k)=\ind(G[C]).
		\]
		Then we have
		\[
		k\leq \sum_{i=1}^{r} \beta(G[C_i])\leq r+k-1.
		\]
		Now take $C_{r+1}=\{u_1,v_{1,1}\}$ and set $C'=\bigsqcup_{i=1}^{r+1} C_i$. Then observe that there is no edge among vertices of $G[C_i]$ and $G[C_{r+1}]$ in $G$, for any $1\leq i\leq r$. Moreover, $\beta(G[C_{r+1}])=1$ and so,
		\[
		I(G[C_{r+1}])^{\{\beta(G[C_{r+1}])\}} = I(G[C_{r+1}]) = \l u_1v_{1,1}\r.
		\]
		Finally,
		\[
		\beta(G[C']) = \sum_{i=1}^{r+1} \beta(G[C_i]) = \beta(G[C]) + 1,
		\]
		and hence, $k+1 \leq \sum_{i=1}^{r+1} \beta(G[C_i]) \leq r + k$, which can be rewritten as
		\[
		k \leq \sum_{i=1}^{r+1} \beta(G[C_i]) \leq (r+1) + k - 1.
		\]
		Therefore, $C'$ is a $k$-admissible set of $G$. Note that
		\[
		\ind(G[C']) = |C'| - \beta(G[C']) = |C| + 2 - \beta(G[C]) - 1 = \alpha(G[C]) + 1.
		\]
		This proves that $\adms(G'',k) \leq \adms(G,k) - 1$. The inequality $\adms(G'',k-1) \leq \adms(G,k) - 1$ can be obtained using \Cref{lem: induced adm} and \Cref{lem: ind lemma 2}. Therefore, in this case, we have
		\[
		\reg(I(G')^{\{k\}}:u_1) \leq \adms(G,k)+k - 1,
		\]
		and consequently,
		\[
		\reg(I(G)^{\{k\}}:u_1) \leq \adms(G,k)+k - 1.
		\]
		
		\par
		
		It now remains to consider the case when $d\geq 3$. For this, we first take $W=\{u_1,v_{1,1}\}$. Then, by similar arguments as in the previous case, we have 
		\begin{align*}
			\reg(I(G)^{\{k\}}:u_1)& \leq \max\{\reg(I(G\setminus v_{1,1})^{\{k\}}:u_1),\; \reg(I(G)^{\{k\}}:u_1v_{1,1})+1 \}\\
			&\leq \max\{\reg(I(G\setminus v_{1,1})^{\{k\}}:u_1), \adms(G\setminus \{u_1,v_{1,1},\dots , v_{1,s}\},k-1)+k-1 \}      
		\end{align*}
		Again to find an upper bound for $\reg(I(G\setminus v_{1,1})^{\{k\}}:u_1)$, we consider $W=\{u_1,v_{1,2}\}$ and proceed similarly. After $s$-many steps, we finally obtain, 
		\begin{align*}
			\reg(I(G)^{\{k\}}:u_1) &\leq \max\{\reg(I(G\setminus \{v_{1,1}, \dots , v_{1,s}\})^{\{k\}}:u_1), \\ & \hspace{3cm} \adms(G\setminus \{u_1,v_{1,1},\dots , v_{1,s}\},k-1)+k-1 \}.
		\end{align*} 
		Now, let us set $G'=G\setminus \{v_{1,1}, \dots , v_{1,s}\}$. Note that in $G'$, the block $L'$ with $V(L')=\{u_1,\dots, u_d\}$ is a special block and $d\geq 3$. It follows that $L$ is a special block of Type III in $G'$ with $\mathcal{N}_{G'}(L,u_1)=\emptyset$. Thus, by the induction hypothesis on (2), we have 
		\[
		\reg(I(G')^{\{k\}}:u_1)\leq \adms(G,k)+k-1.
		\]
		So now, it remains to show that 
		\[
		\adms(G \setminus \{u_1, v_{1,1}, \dots, v_{1,s}\}, k-1) \leq \adms(G, k)-1,
		\]
		and it directly follows from \Cref{lem: ind lemma 2}. This completes the proof of (3)
		
		\noindent
		\textbf{Proof of (1):} Let $L$ be a special block of $G$ which is either of Type II, or of Type III. Let $V(L)=\{u_1,\dots , u_d\}$. Then either $\mathcal{N}_G(L,u_i)=\emptyset$ for some $i\in [d-1]$, or $\mathcal{N}_G(L,u_i)\neq \emptyset$ for each $i\in [d-1]$. Thus, using (2) and (3), we can choose some $i\in[d-1]$ such that $\reg(I(G)^{\{k\}}:u_i)\leq \adms(G,k)+k-1$.  On the other hand,
		\begin{align*}
			\reg( I(G)^{\{k\}}+\l u_i\r)&\le  \reg(I(G\setminus u_i)^{\{k\}})\\
			 &\leq \adms(G\setminus u_i,k)+k\\
			 & \le \adms(G,k)+k.
		\end{align*}
		where the inequality in the first line follows from \Cref{regularity lemma} (i), the inequality in the second line follows from the
		  by the induction hypothesis on (1), and the inequality in the last line follows from  \Cref{lem: induced adm}. Hence, the assertion $\reg(I(G)^{\{k\}}) \leq \adms(G,k)+k$ follows from \Cref{regularity lemma} (iii). This completes the proof of (1).
		  
		  Finally, we have $\reg(I(G)^{\{k\}})=\adms(G,k)+k$ using (1) and \Cref{cor:lower bound sqf symbolic}. This completes the proof of the theorem.
	\end{proof}

    \begin{remark}\label{lem: reg of complete}
        Since the complete graph $K_n$ is a block graph, it follows from \Cref{thm: block graph main} that $\reg (I(K_n)^{\{k\}})=\adms(K_n,k)+k$ for all $1\leq k\leq \beta(K_n)=n-1$. However, one can simply observe that $\adms(K_n,k)=1$ for all such $k$. Indeed, if we take $C\subseteq V(K_n)$ with $|C|\ge 2$ such that $C=\sqcup_{i=1}^rC_i$ is a $k$-admissible set of $K_n$, then the induced subgraph on $C$ is a complete subgraph of $K_n$, and hence by Condition (2) of \Cref{def: sym aim for graph}, we must have $r=1$. Also, $E(G[C])\neq \emptyset$ as $|C|\geq 2$. Moreover, condition (3) of \Cref{def: sym aim for graph} is satisfied if and only if  $|C|=k+1$. And finally, if $|C|=k+1$ then  $I(G[C])^{\{\beta(G[C])\}}=I(G[C])^{\{k\}}= (\mathbf{x}_C)$. Therefore, a vertex set $C\subseteq V(K_n)$ is a $k$-admissible set if and only if $|C|=k+1$, and consequently, $\adms(K_n,k)=\max\{\alpha(G[C]): |C|=k+1\}=1$.
    \end{remark}

\section{Second squarefree symbolic power of Cohen-Macaulay Chordal graphs}\label{sec: CM chordal}
In this section, we consider the class of Cohen-Macaulay chordal graphs and show that the general lower bound for the second squarefree symbolic power is attained. Recall the following combinatorial classification of the Cohen-Macaulay property of the edge ideal of a chordal graph by Herzog, Hibi, and Zheng in \cite{CMchordal}. These graphs are simply called the Cohen-Macaulay chordal graph.
	
	\begin{definition}
		A graph $G$ is said to be a \emph{Cohen-Macaulay chordal} graph if $G$ is chordal and $V(G)$ can be partitioned into $W_1,\ldots,W_m$ such that $G[W_i]$ is a maximal clique containing a free vertex for each $i\in[m]$.
	\end{definition}
	
	 To prove the regularity formula, we require some auxiliary lemmas regarding squarefree symbolic powers of edge ideals and their behavior with various colon operations. 
	
	\begin{lemma}\label{lem: symb sq-free 4}
		Let $\{x,y\}\in E(G)$ be such that $N_G[x]\subseteq N_G[y]$. Then for any $1\leq k\leq \beta(G)$
		\[
		(I(G\setminus y)^{\{k\}}:x)\subseteq (I(G\setminus x)^{\{k\}}:y).
		\]
	\end{lemma}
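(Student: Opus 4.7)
The plan is to use the standard vertex-cover description of squarefree symbolic powers: for any hypergraph $H$ and any monomial $u$, one has $u\in I(H)^{\{k\}}$ if and only if $|\supp(u)\cap C|\geq k$ for every minimal vertex cover $C$ of $H$. Since ideal containment reduces to containment of monomials, it suffices to show that for every squarefree monomial $m$ with $xm\in I(G\setminus y)^{\{k\}}$ one has $ym\in I(G\setminus x)^{\{k\}}$. Writing $S=\supp(m)\setminus\{x,y\}$ and using that $y\notin V(G\setminus y)$ and $x\notin V(G\setminus x)$, the hypothesis translates into $|S\cap C|+[x\in C]\geq k$ for every minimal vertex cover $C$ of $G\setminus y$, and the goal becomes $|S\cap C'|+[y\in C']\geq k$ for every minimal vertex cover $C'$ of $G\setminus x$, where $[\,\cdot\,]$ denotes the Iverson bracket.

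Fix a minimal vertex cover $C'$ of $G\setminus x$. The strategy is to build a vertex cover of $G\setminus y$ contained in $C'\cup\{x\}$ and apply the hypothesis to a minimal vertex cover inside it. If $y\in C'$, I would set $\widetilde C=(C'\setminus\{y\})\cup\{x\}$; then $\widetilde C$ covers $G\setminus y$ because edges incident to $x$ are covered by $x$, while edges disjoint from $\{x,y\}$ belong to $G\setminus x$ and are therefore covered by $C'\setminus\{y\}\subseteq\widetilde C$. Any minimal vertex cover $C\subseteq\widetilde C$ of $G\setminus y$ lies inside $C'\cup\{x\}$, and since $x\notin S$ the chain
\[
k\leq|S\cap C|+[x\in C]\leq|S\cap C'|+1=|S\cap C'|+[y\in C']
\]
closes this case.

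If instead $y\notin C'$, the vertex-cover property of $C'$ in $G\setminus x$ forces $N_G(y)\setminus\{x\}\subseteq C'$, and the hypothesis $N_G[x]\subseteq N_G[y]$ then yields $N_G(x)\setminus\{y\}\subseteq N_G(y)\setminus\{x\}\subseteq C'$. Hence $C'$ itself covers $G\setminus y$: every edge $\{x,v\}$ with $v\neq y$ is covered since $v\in N_G(x)\setminus\{y\}\subseteq C'$, and all other edges already lie in $G\setminus x$. I would then pass to a minimal vertex cover $C\subseteq C'$ of $G\setminus y$, which necessarily avoids $x$; the hypothesis reduces to $|S\cap C|\geq k$, and combined with $S\cap C\subseteq S\cap C'$ and $[y\in C']=0$ this gives $|S\cap C'|+[y\in C']\geq k$.

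The main obstacle is identifying the correct vertex-cover exchange in each case. The hypothesis $N_G[x]\subseteq N_G[y]$ enters only in the second case, where it is precisely what allows $C'$ to double as a vertex cover of $G\setminus y$ without further modification; the first case amounts to a direct swap of $y$ for $x$, and the remainder is bookkeeping for the sets $S$, $\{x\}$, and $\{y\}$ inside various minimal vertex covers.
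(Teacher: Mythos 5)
Your proof is correct and follows essentially the same route as the paper's: the same reduction to the vertex-cover criterion for squarefree symbolic powers, the same case split on whether $y$ lies in the given minimal vertex cover of $G\setminus x$, the same exchange $(C'\setminus\{y\})\cup\{x\}$ in the first case, and the same use of $N_G[x]\subseteq N_G[y]$ to see that $C'$ already covers $G\setminus y$ in the second. The only cosmetic difference is that you pass explicitly to a minimal vertex cover inside the constructed cover, whereas the paper applies the membership criterion directly to the (possibly non-minimal) cover; both are justified by the same monotonicity of intersections.
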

	\begin{proof}
		Let $f$ be a squarefree monomial such that $xf\in I(G\setminus y)^{\{k\}}$. Then for all $P\in \ass(I(G\setminus y))$ we have $|P\cap \supp(xf)|\geq k$. We may assume that $x,y\notin \supp(f)$. To show $f\in (I(G\setminus x)^{\{k\}}:y)$, it is enough to show that $|Q\cap \supp(yf)|\geq k$ for any $Q\in \ass(I(G\setminus x))$. We now consider the following two cases:
		
		\noindent
		\textbf{Case I:} Assume that $y\in Q$. If we consider $Q'=(Q\cup \{x\})\setminus \{y\}$, then observe that $Q'$ is a vertex cover of $G\setminus y$. Since $xf\in I(G\setminus y)^{\{k\}}$, it follows that $|Q'\cap \supp(xf)|\geq k$, which implies that $|Q'\cap \supp(f)|\geq k-1$, and hence $|Q\cap \supp(yf)|\geq k$.
		
		\noindent
		\textbf{Case II:} Assume that $y\notin Q$. Since $N_G[x]\subseteq N_G[y]$, we see that $Q$ is a vertex cover of $G\setminus y$. Again, since $xf\in I(G\setminus y)^{\{k\}}$, we have $|Q\cap \supp(xf)|\geq k$. As $x\notin Q$, we get $|Q\cap \supp(f)|\geq k$, which implies that $|Q\cap \supp(yf)|\geq k$.
		
		\noindent
		Thus, for any $Q\in \ass(I(G\setminus x))$, we have $|Q\cap \supp(yf)|\geq k$. Therefore, $f\in (I(G\setminus x)^{\{k\}}:y)$.
	\end{proof}
	
	If $G$ is any chordal graph, and $\{x,y\}\in E(G)$, then $(I(G)^{[2]}:xy)$ is again a quadratic squarefree monomial ideal, and thus can be realized as an edge ideal of a graph $H$. Indeed, by \cite[Proposition 5.3]{ChauDasRoySahaSqfOrd}, we have $(I(G)^{[2]}:xy)=I(H)$, where $H$ is a weakly chordal graph with the vertex set and edge set described as follows:
		\begin{align*}
		V(H)&=V(G)\setminus\{x,y\}\text{ and } \\ E(H)&=E(G\setminus\{x,y\})\cup\{\{u,v\}\mid u\in N_G(x)\setminus\{y\},v\in N_G(y)\setminus\{x\},\text{ and }u\neq v\}.
	\end{align*}
	 On the other hand, it follows from the explicit expression of the second symbolic powers of edge ideals of simple graphs (see \cite[Corollary 3.12]{Sullivant2008}) that $I(G)^{\{2\}}=I(G)^{[2]}+J_3(G)$, where $J_3(G)$ is the ideal generated by $x_ix_jx_k$, where $x_i,x_j,x_k$ forms a triangle in $G$. Let $\widetilde{G}$ denote the induced subgraph of $H$ on the vertex set $V(H)\setminus\{a\mid \{a,x,y\}\text{ forms a triangle in }G\}$. Then the following lemma follows from the above discussion.
	
	\begin{lemma}\label{lem: CM chordal colon}
		Let $\{x,y\}$ be an edge of a chordal graph $G$. Then 
		\[
		(I(G)^{\{2\}}:xy)=I(\widetilde{G})+\l a\mid a\in V(H)\setminus V(\widetilde{G})\r,
		\]
		where $\widetilde{G}$ is a weakly chordal graph.
	\end{lemma}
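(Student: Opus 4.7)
The plan is to decompose the second squarefree symbolic power using the explicit formula $I(G)^{\{2\}}=I(G)^{[2]}+J_3(G)$ from \cite[Corollary 3.12]{Sullivant2008}, where $J_3(G)$ is generated by the triangle monomials of $G$. Then by distributivity of colons over sums,
\[
(I(G)^{\{2\}}:xy)=(I(G)^{[2]}:xy)+(J_3(G):xy)=I(H)+(J_3(G):xy),
\]
where the second equality uses \cite[Proposition 5.3]{ChauDasRoySahaSqfOrd}. So the work reduces to identifying $(J_3(G):xy)$ and merging it with $I(H)$.

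Next, I would analyze $(J_3(G):xy)$ generator-by-generator. For a triangle $\{a,b,c\}$ in $G$, the colon $(abc):(xy)$ equals $(abc/\gcd(abc,xy))$. There are three cases depending on how $\{x,y\}$ meets $\{a,b,c\}$: if $\{x,y\}\subseteq\{a,b,c\}$ then the surviving generator is the third vertex $z$, contributing exactly $\{z:\{x,y,z\}\text{ is a triangle}\}$, i.e., the variables corresponding to vertices of $V(H)\setminus V(\widetilde{G})$; if $|\{x,y\}\cap\{a,b,c\}|=1$, say $x=a$, then $(abc):(xy)=(bc)$, and since $b,c\in N_G(x)\setminus\{y\}$ with $\{b,c\}\in E(G)$ we have $\{b,c\}\in E(G\setminus\{x,y\})\subseteq E(H)$, so $bc\in I(H)$; and if $\{x,y\}\cap\{a,b,c\}=\emptyset$, the three edges of the triangle lie in $E(G\setminus\{x,y\})\subseteq E(H)$, so $abc\in I(H)$. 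Consequently $(J_3(G):xy)\subseteq I(H)+(a:a\in V(H)\setminus V(\widetilde{G}))$ with the common-neighbor variables contributing the only genuinely new generators.

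It remains to rewrite the sum. Setting $Z\coloneqq V(H)\setminus V(\widetilde{G})$, any minimal generator of $I(H)$ that involves a variable in $Z$ is absorbed by $(a:a\in Z)$, while those supported entirely on $V(\widetilde{G})$ are precisely the generators of $I(\widetilde{G})=I(H[V(H)\setminus Z])$. Hence $I(H)+(a:a\in Z)=I(\widetilde{G})+(a:a\in Z)$, yielding the claimed identity.

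Finally, to see that $\widetilde{G}$ is weakly chordal, I would invoke hereditarity: weak chordality is closed under taking induced subgraphs because an induced cycle $C_n$ ($n\geq 5$) in $\widetilde{G}$ would also be induced in $H$, and the complement of an induced subgraph is an induced subgraph of the complement, so an induced $C_n$ in $\widetilde{G}^c$ would likewise appear in $H^c$. Since $H$ is weakly chordal by \cite[Proposition 5.3]{ChauDasRoySahaSqfOrd}, so is $\widetilde{G}$. The main bookkeeping obstacle is the case analysis in Step 2 to make sure no triangle-colon generator escapes being either a common-neighbor variable or already present in $I(H)$; the rest is formal.
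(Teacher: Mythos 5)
Your proposal is correct and follows essentially the same route as the paper, which proves the lemma by exactly the discussion you elaborate: the identity $(I(G)^{[2]}:xy)=I(H)$ from \cite[Proposition 5.3]{ChauDasRoySahaSqfOrd}, the decomposition $I(G)^{\{2\}}=I(G)^{[2]}+J_3(G)$, and the definition of $\widetilde{G}$ as the induced subgraph of $H$ obtained by deleting the common neighbors of $x$ and $y$. Your case analysis of $(J_3(G):xy)$ and the absorption argument merely make explicit the details the paper leaves to the reader, and they are all accurate.
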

	
	We now turn our attention to the relationship between the admissible independence numbers of $G$ and $\widetilde{G}$, when $G$ is a Cohen-Macaulay chordal graph. These relationships will be helpful in the proof of the main theorem in this section.
	
	\begin{lemma}\label{lem: CM chordal aim}
		Let $G$ be a Cohen-Macaulay chordal graph with the decomposition $V(G)=W_1\sqcup W_2\sqcup \dots\sqcup W_m$ such that $m\ge 2$ and for each $i\in [m]$, $G[W_i]\cong K_{t_i}$, where $t_i\ge 2$. 
		\begin{enumerate}
			\item Let $x\in W_i$ such that $W_i\setminus \{x\}$ contains at least one free vertex of $G$. Let $y\in N_G[x]\setminus W_i$ and $(I(G)^{\{2\}}:xy)=I(\widetilde{G})$. Then $\nu_1(\widetilde{G})\le \adms(G,2)-1$.
			
			\item Let $x,y\in W_i$ be free vertices of $G$. Then $(I(G)^{\{2\}}:xy)=I(G\setminus W_i)+( W_i\setminus \{x,y\} )$. Moreover, $\nu_1(G\setminus W_i)\le \adms(G,2)-1$.
		\end{enumerate}  
	\end{lemma}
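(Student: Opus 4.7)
My plan is to handle Part~(2) via a direct colon computation plus a clean block construction, and then to attack Part~(1) by exploiting combinatorial constraints on the induced matching $M$ inside $\widetilde{G}$.

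For Part~(2), I will apply \Cref{lem: CM chordal colon} to the edge $\{x,y\}\in E(G[W_i])$. Since $x,y$ are both free vertices of $G$, their closed neighborhoods coincide: $N_G[x]=N_G[y]=W_i$. Consequently, the triangle vertices on $\{x,y\}$ are exactly the elements of $W_i\setminus\{x,y\}$, and the ``new'' edges of the auxiliary graph $H$ (from the paragraph preceding \Cref{lem: CM chordal colon}) run between $N_G(x)\setminus\{y\}=W_i\setminus\{x,y\}$ and $N_G(y)\setminus\{x\}=W_i\setminus\{x,y\}$, so they lie entirely inside $W_i\setminus\{x,y\}$. After restricting to $V(G)\setminus W_i$ no new edge survives, so $\widetilde{G}=G\setminus W_i$ and the linear piece of the colon is $(W_i\setminus\{x,y\})$, giving the stated identity. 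For the inequality $\nu_1(G\setminus W_i)\le\adms(G,2)-1$, let $M=\{e_1,\dots,e_p\}$ be an induced matching of $G\setminus W_i$ of maximum size, and set $C=V(M)\cup\{x,y\}$ with partition $C_j=e_j$ for $j\in[p]$ and $C_{p+1}=\{x,y\}$. Freeness of $x,y$ places all their $G$-neighbors inside $W_i$, so there is no $G$-edge from $\{x,y\}$ to $V(M)\subseteq V(G)\setminus W_i$; each block is a $K_2$, and the four conditions of \Cref{def: sym aim for graph} are immediate, yielding $\ind(G[C])=p+1$.

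For Part~(1), fix a free vertex $z\in W_i\setminus\{x\}$ of $G$ (so $N_G[z]=W_i$), and note that the hypothesis $(I(G)^{\{2\}}:xy)=I(\widetilde{G})$ (no linear part) forces $N_G(x)\cap N_G(y)=\emptyset$ via \Cref{lem: CM chordal colon}. Let $M=\{e_1,\dots,e_p\}$ be an induced matching of $\widetilde{G}$ realizing $\nu_1(\widetilde{G})$. The core of the argument is to establish two structural observations: (O1)~$|W_i\cap V(M)|\le 1$, because two $W_i$-vertices in distinct $M$-edges would be joined by a clique edge of $G$---and hence of $\widetilde{G}$---contradicting $M$ induced; (O2)~at most one edge of $M$ is ``new'' (not in $E(G)$), because two new edges $\{u_j,v_j\}$ with $u_j\in N_G(x)\setminus\{y\}$ and $v_j\in N_G(y)\setminus\{x\}$ would produce a further new edge $\{u_1,v_2\}$ of $\widetilde{G}$ across them. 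As a corollary of (O2), I will also show that $A=V(M)\cap N_G(x)$ and $B=V(M)\cap N_G(y)$, when both nonempty, must lie inside a common single edge of $M$.

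With these constraints in hand, I will construct the 2-admissible set $C$ by case analysis. In the generic case (no new edge in $M$, $W_i\cap V(M)=\emptyset$, and $A=\emptyset$), the partition $C_j=e_j$ together with $C_{p+1}=\{x,z\}$ already works. When $M$ contains a (unique) new edge $\{u,v\}$, the key move is to merge $\{u,v\}$ with $\{x,y\}$ into a block $C_\star=\{x,y,u,v\}$: by $N_G(x)\cap N_G(y)=\emptyset$, the induced subgraph $G[C_\star]$ is the $4$-path $u-x-y-v$, which has $\beta=2$ and, by \Cref{lem: symb principal}, satisfies $I(G[C_\star])^{\{2\}}=(xyuv)$; the remaining Type A edges of $M$ form the other blocks, and (O1)--(O2) rule out cross $G$-edges. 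The case $W_i\cap V(M)=\{u^{\ast}\}$ is handled analogously by merging the $M$-edge through $u^{\ast}$ with a suitable edge inside $W_i$ (e.g.\ $\{x,z\}$ or $\{z,u^{\ast}\}$) and verifying condition~(4) via \Cref{lem: symb principal}.

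The principal obstacle will be the exhaustive case analysis in Part~(1), particularly verifying condition~(4) after a merge when the partner of a ``new-edge'' vertex or a $W_i$-vertex is not in $N_G(x)\cup N_G(y)$. I expect this verification to reduce via \Cref{lem: symb principal} to a small catalog of subgraphs---single edges, $4$-paths, and clique-plus-pendant subgraphs inside $W_i$---each of which can be checked by direct inspection of minimal vertex covers, with (O1)--(O2) guaranteeing that the configurations outside $W_i\cup\{x,y\}$ are already induced in $G$.
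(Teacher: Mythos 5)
Your Part~(2) is correct and is essentially the paper's argument, only more explicit: since $x,y$ are free vertices of the maximal clique $W_i$ one has $N_G[x]=N_G[y]=W_i$, so in the auxiliary graph of \Cref{lem: CM chordal colon} the new edges and the triangle vertices all live in $W_i\setminus\{x,y\}$, giving $(I(G)^{\{2\}}:xy)=I(G\setminus W_i)+(W_i\setminus\{x,y\})$; and $M\cup\{\{x,y\}\}$ is an induced matching of $G$, hence a $2$-admissible set. For Part~(1) the paper simply invokes an external lemma from the companion article together with the fact that $\widetilde{G}$ is an induced subgraph of $H$, whereas you attempt a self-contained argument. Your observations (O1), (O2) and the corollary that $A=V(M)\cap N_G(x)$ and $B=V(M)\cap N_G(y)$, when both nonempty, are confined to a single edge of $M$ are all correct, and they do yield the right trichotomy: either $A=\emptyset$, or $B=\emptyset$, or exactly one edge of $M$ meets $N_G(x)\cup N_G(y)$.

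The gap is that your three construction cases do not cover this trichotomy. The case $B=\emptyset$ with $A$ meeting two or more distinct edges of $M$ (necessarily all old edges, possibly with $V(M)\cap W_i=\emptyset$, e.g.\ $x$ adjacent to one vertex in each of two far-apart pendant edges) is handled by none of them: appending $\{x,z\}$ fails because of the $G$-edges from $x$ into $V(M)$, and there is no single edge to merge with. The move that is needed -- and is absent from your proposal -- is to append the block $\{y,w_0\}$, where $w_0$ is the free vertex of the clique $W_j$ containing $y$ (note $w_0\neq y$ since $x\in N_G(y)\setminus W_j$, and $N_G(w_0)=W_j\setminus\{w_0\}$ is disjoint from $V(M)$ because $B=\emptyset$), so that $M\cup\{\{y,w_0\}\}$ is an induced matching of $G$. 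Relatedly, even when $A$ is confined to a single old edge $e=\{a,a'\}$ and $B=\emptyset$, your proposed merge $C_\star=\{x,y,a,a'\}$ can fail condition~(4) of \Cref{def: sym aim for graph}: if $a'\in N_G(x)$ as well, $G[C_\star]$ is a triangle with the pendant $y$, whose only minimal vertex cover containing $y$ has size $3>\beta=2$, so $I(G[C_\star])^{\{2\}}\neq(\mathbf{x}_{C_\star})$ by \Cref{lem: symb principal}; here too the correct fix is the $\{y,w_0\}$ block rather than a merge. Until the case split is reorganized around the trichotomy (append $\{x,z\}$ when $A=\emptyset$; append $\{y,w_0\}$ when $B=\emptyset$; merge only when both are nonempty, where the merged subgraph is forced to be a $P_4$ or $C_4$), Part~(1) is not proved.
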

	\begin{proof} (1) Follows from \cite[Lemma 5.5]{ChauDasRoySahaSqfOrd}, since $\widetilde{G}$ is an induced subgraph of $H$.
		
		\noindent
		(2) The first assertion follows from \Cref{lem: CM chordal colon}. The last assertion follows from the fact that given an induced matching $M$ of $G\setminus W_i$, the set $M\cup \{\{x,y\}\}$ is an induced matching of $G$, and hence a $2$-admissible set of $G$ with $\adms(G[V(M)\cup \{x,y\}])=|M|+1$.
	\end{proof}
	
	\begin{lemma}\label{lem: ind lemma}
		Let $G$ be any graph and $x,y\in V(G)$ such that $N_G[x]\subseteq N_G[y]$. Then  $\adms(G\setminus N_G[y],k)\le\adms(G,k)-1$ for any $1\le k\le \beta(G)$.
	\end{lemma}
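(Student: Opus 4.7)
The plan is to enlarge an optimal $k$-admissible set $C=\sqcup_{i=1}^r C_i$ of $G\setminus N_G[y]$ into a $k$-admissible set of $G$ with strictly larger independence number by appending $\{x,y\}$ as an extra block in the partition. Since $x\in N_G[x]\subseteq N_G[y]$, we have (in the non-degenerate case $x\ne y$) that $\{x,y\}\in E(G)$, so $\{x,y\}$ provides a bona fide $K_2$-block to adjoin. The key feature that makes the extension harmless is that $N_G(x)\cup N_G(y)\subseteq N_G[y]$, so the vertex set $C\subseteq V(G)\setminus N_G[y]$ is disjoint from $N_G[y]$ and, crucially, no vertex of $C$ is adjacent in $G$ to either $x$ or $y$.

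Concretely, fix $C=\sqcup_{i=1}^rC_i$ with $\alpha((G\setminus N_G[y])[C])=\adms(G\setminus N_G[y],k)$, set $C_{r+1}=\{x,y\}$, and let $C'=C\sqcup\{x,y\}$. The next step is to verify that $C'=\sqcup_{i=1}^{r+1}C_i$ satisfies the four conditions of \Cref{def: sym aim for graph} for $G$. For $i\le r$ we have $G[C_i]=(G\setminus N_G[y])[C_i]$, so conditions (1) and (4) transfer verbatim from the admissibility of $C$ in $G\setminus N_G[y]$; for $i=r+1$, condition (1) is immediate and condition (4) follows from $I(G[\{x,y\}])^{\{1\}}=(xy)=(\mathbf{x}_{C_{r+1}})$. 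Condition (2) for $C'$ holds because the only potentially new edges of $G[C']$ would be edges between $C$ and $\{x,y\}$, and these are excluded by $N_G(x)\cup N_G(y)\subseteq N_G[y]$ and $C\cap N_G[y]=\emptyset$. Condition (3) is upgraded correctly: $\beta(G[C_{r+1}])=1$ gives
\[
k+1\ \le\ \sum_{i=1}^{r+1}\beta(G[C_i])\ \le\ r+k\ =\ (r+1)+k-1,
\]
so in particular the required bound $k\le \sum_{i=1}^{r+1}\beta(G[C_i])\le (r+1)+k-1$ holds.

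With $C'$ confirmed to be a $k$-admissible set of $G$, additivity of the independence number over vertex-disjoint unions yields
\[
\adms(G,k)\ \ge\ \alpha(G[C'])\ =\ \alpha(G[C])+\alpha(G[\{x,y\}])\ =\ \adms(G\setminus N_G[y],k)+1,
\]
which is the claim. I do not foresee any serious obstacle: the argument is a single-block extension of the optimal partition, and the four conditions of \Cref{def: sym aim for graph} are preserved by the additive behaviour of $\alpha$ and $\beta$ together with the fact, forced by $N_G[x]\subseteq N_G[y]$, that the new block $\{x,y\}$ is totally disconnected from $C$ in $G$. The only mild bookkeeping concerns the boundary case $k>\beta(G\setminus N_G[y])$, where $\adms(G\setminus N_G[y],k)$ is not defined by \Cref{def: sym aim for graph} and the inequality should be interpreted as vacuous.
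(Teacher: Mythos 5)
Your proposal is correct and follows essentially the same route as the paper's proof: adjoin $C_{r+1}=\{x,y\}$ to an optimal $k$-admissible set of $G\setminus N_G[y]$, observe that $N_G[x]\subseteq N_G[y]$ forces $\{x,y\}\in E(G)$ and isolates the new part from $C$, verify the four conditions (with the same $k+1\le\sum\beta\le r+k$ bookkeeping), and conclude by additivity of $\alpha$. Your explicit remarks on the degenerate cases ($x=y$ and $k>\beta(G\setminus N_G[y])$) are reasonable housekeeping that the paper leaves implicit.
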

	\begin{proof}
		Set $H=G\setminus N_G[y]$. Let $C=\sqcup_{i=1}^rC_i$ be a $k$-admissible set of $H$ such that $\adms(H,k)= \alpha(H[C])$. Now, set $C_{r+1}=\{x,y\}$ and $C'=\sqcup_{i=1}^{r+1}C_i$. Note that $G[C]=H[C]$, and hence there is no edge among vertices of $G[C_{r+1}]$ and $G[C]$, as $N_G[x,y]=N_G[y]$ and $N_G[y]\cap V(H)=\emptyset$. Moreover, $\beta(G[C_{r+1}])=1$ and hence $I(G[C_{r+1}])^{\{1\}}=( xy)$. Thus,
		\[
		k+1\leq \sum_{i=1}^{r+1}\beta(G[C_i])\leq r+k,
		\]
		 which can be rewritten as $k\leq \sum_{i=1}^{r+1}\beta(G[C_i])\leq (r+1)+k-1$. Therefore, $C'$ is a $k$-admissible set of $G$. Note that $\alpha(G[C'])=\alpha(G[C])+\alpha(G[C_{r+1}])=\adms(H,k)+1$, and this completes the~ proof.
	\end{proof}
	
	
	We are now in the position to prove the main theorem of this section. The proof goes along similar lines as \cite[Theorem 5.8]{ChauDasRoySahaSqfOrd}.
	
	\begin{theorem}\label{thm: CMchordal}
		Let $G$ be a Cohen-Macaulay chordal graph with  $\beta(G)\geq 2$. Then 
		\[
		\mathrm{reg}(I(G)^{\{2\}})= \adms(G,2)+2.
		\]
	\end{theorem}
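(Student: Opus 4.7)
Plan: The lower bound $\reg(I(G)^{\{2\}})\ge\adms(G,2)+2$ is immediate from \Cref{cor:lower bound sqf symbolic}. The remainder of the proof establishes the matching upper bound, and I plan to follow closely the strategy of \cite[Theorem~5.8]{ChauDasRoySahaSqfOrd}, replacing the computations for the ordinary squarefree second power with their symbolic counterparts provided by \Cref{lem: CM chordal colon} and \Cref{lem: CM chordal aim}.

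I induct on $|V(G)|$. The base case is when $G$ has a single maximal clique, i.e.\ $G\cong K_n$ with $n=\beta(G)+1\ge 3$; here \Cref{lem: reg of complete} yields the desired equality. For the inductive step, write $V(G)=W_1\sqcup\cdots\sqcup W_m$ with $m\ge 2$. The disconnected case splits using condition~(d) of squarefree-power-like functions (applied to $\F=I(\cdot)^{\{\cdot\}}$ via \Cref{Symbolic SPLF}) together with \Cref{reg sum}, so I may assume $G$ is connected. The key combinatorial input is the choice of an edge $\{x,y\}\in E(G)$ satisfying one of the structural hypotheses of \Cref{lem: CM chordal aim}: either (A) both $x,y$ are free vertices of $G$ lying in a common $W_i$ (part~(2) of the lemma), or (B) $x\in W_i$ is non-free, $W_i\setminus\{x\}$ contains a free vertex, and $y\in N_G[x]\setminus W_i$ (part~(1)). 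A short case analysis shows such an edge always exists: if some $W_i$ has two free vertices, option~(A) applies; otherwise each $W_i$ has a unique free vertex, so connectedness together with $m\ge 2$ forces an inter-clique edge whose endpoints must be non-free, giving option~(B).

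Given such an edge, \Cref{regularity lemma}(iii) applied to the monomial $xy$ yields
\[
\reg(I(G)^{\{2\}})\le \max\{\reg(I(G)^{\{2\}}:xy)+2,\; \reg(I(G)^{\{2\}}+(xy))\}.
\]
The colon term is controlled by \Cref{lem: CM chordal colon}, which identifies $(I(G)^{\{2\}}:xy)=I(\widetilde{G})+(a:a\in V(H)\setminus V(\widetilde{G}))$ with $\widetilde{G}$ weakly chordal. Applying \Cref{reg sum} (the two summands being supported in disjoint variable sets) together with Woodroofe's regularity formula $\reg(I(\widetilde{G}))=\nu_1(\widetilde{G})+1$ for weakly chordal edge ideals, and substituting the bounds $\nu_1(\widetilde{G})\le \adms(G,2)-1$ in case~(B) and $\nu_1(G\setminus W_i)\le \adms(G,2)-1$ in case~(A) supplied by \Cref{lem: CM chordal aim}, I obtain $\reg(I(G)^{\{2\}}:xy)+2\le\adms(G,2)+2$.

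The bulk of the work, and the main obstacle, lies in bounding the sum term $\reg(I(G)^{\{2\}}+(xy))$, since vertex deletion generally destroys the Cohen-Macaulay chordal structure and thus blocks naive induction. The plan here is to iterate \Cref{regularity lemma}(iii) on a single variable, reducing $\reg(I(G)^{\{2\}}+(xy))$ through $\reg((I(G)^{\{2\}}:x)+(y))+1$ (the colon summand, controllable via \Cref{lem: symb sq-free 2}) and $\reg(I(G)^{\{2\}}+(x))$, where the latter simplifies to $\reg(I(G\setminus x)^{\{2\}})$ through the identity $I(G)^{\{2\}}+(z)=I(G\setminus z)^{\{2\}}+(z)$, which holds for any vertex $z$ by the correspondence between minimal vertex covers of $G$ and $G\setminus z$. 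Although $G\setminus x$ is chordal but no longer Cohen-Macaulay chordal, $\reg(I(G\setminus x)^{\{2\}})$ can be controlled by combining \Cref{lem: induced adm} with admissible-number manipulations in the spirit of \Cref{lem: ind lemma 1} and \Cref{lem: ind lemma 2}, refined by the specific structural choice of $\{x,y\}$ (especially the presence of the auxiliary free vertex $x'\in W_i$ in case~(B), or of a second free vertex in case~(A) that preserves enough structure after removing $x$). This delicate admissible-number bookkeeping under vertex deletion is expected to be the technically hardest step of the argument.
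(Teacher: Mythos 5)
Your overall frame (lower bound from \Cref{cor:lower bound sqf symbolic}, induction on $|V(G)|$, choice of an edge $\{x,y\}$ matching the hypotheses of \Cref{lem: CM chordal aim}, and the treatment of the colon term $\reg(I(G)^{\{2\}}:xy)$ via \Cref{lem: CM chordal colon} and Woodroofe's formula) agrees with the paper. The gap is in your handling of the sum term $\reg(I(G)^{\{2\}}+(xy))$, which you yourself flag as the hardest step but do not actually carry out. Colon-ing by the bare variable $x$ at this stage produces $(I(G)^{\{2\}}:x)+(y)$, which is not an edge ideal: besides $I(G\setminus\{x,y\})^{\{2\}}$ it contains the quadratic generators $zw$ coming from triangles $\{x,z,w\}$ with $z,w\neq y$, together with the mixed cubic generators $z\cdot \mathbf{x}_{e}$ for $z\in N_G(x)\setminus\{y\}$ and edges $e$ of $G$ disjoint from $\{x,z,y\}$. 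No lemma in the paper controls the regularity of such a mixed-degree ideal. Your appeal to \Cref{lem: symb sq-free 2} does not close this: that lemma reduces $\reg(I(G)^{\{k\}}:x)$ to colons $\reg(I(G\setminus U_1)^{\{k\}}:\mathbf{x}_{U_2})$, which are evaluable via \Cref{lem: symb sq-free 1} only when $U_2$ is the closed neighbourhood of a simplicial vertex (your $x$ in case (B) is not simplicial), and even then one needs $\adms$-inequalities in the style of \Cref{lem: ind lemma 1} and \Cref{lem: ind lemma 2}, which are proved only for block graphs. So the assertion that the colon summand is ``controllable'' is precisely the unproven core of the theorem.

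The paper's proof avoids this by never colon-ing by a bare variable until \emph{all} products $x_1z$ for $z\in N_G(x_1)=\{y_1,\dots,y_s,x_2,\dots,x_r\}$ have been adjoined: it iterates \Cref{regularity lemma}(iii) on the monomials $x_1y_{l+1}$ and $x_1x_{p+1}$ one at a time (Claims 1 and 2), so that every intermediate colon is again of the form $(I(G_l)^{\{2\}}:x_1y_{l+1})+(\text{variables})$, i.e.\ the edge ideal of a weakly chordal graph plus variables, handled by Woodroofe and \Cref{lem: CM chordal aim}; only then does the final colon $(I(G)^{\{2\}}+x_1\cdot(N_G(x_1))):x_1$ collapse to $I(G\setminus N_G[x_1])^{\{2\}}+(N_G(x_1))$, which is dispatched by induction and \Cref{lem: ind lemma}. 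This sequential peeling is the mechanism your plan is missing. (A side remark: your worry that $G\setminus x$ fails to be Cohen--Macaulay chordal is unfounded for your choice of $x$ --- since $W_i\setminus\{x\}$ retains a free vertex $v$ with $N_G[v]=W_i$, the clique $W_i\setminus\{x\}$ remains maximal --- so the term $\reg(I(G\setminus x)^{\{2\}})$ would in fact be fine by induction; it is the colon term that breaks.)
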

	\begin{proof}
		The inequality $\mathrm{reg}(I(G)^{\{2\}})\ge \adms(G,2)+2$ follows from \Cref{cor:lower bound sqf symbolic}. To prove the upper bound $\mathrm{reg}(I(G)^{\{2\}})\le \adms(G,2)+2$, we use induction on $|V(G)|$. Since $\beta(G)\geq 2$, we have $|V(G)|\ge 3$. If $|V(G)|=3$, then $G=K_3$. Thus, $I(G)^{\{2\}}$ is a principal ideal generated by all the vertices of $G$, and hence, it is easy to see that $\mathrm{reg}(I(G)^{\{2\}})=\adms(G,2)+2$.

		Now, suppose $|V(G)|\ge 4$. Since $G$ is a Cohen-Macaulay chordal graph, we can write $V(G)=W_1\sqcup W_2\sqcup \dots\sqcup W_m$ such that $G[W_i]\cong K_{t_i}$ for each $i\in [m]$, where $t_i\ge 2$. Then we have the following two cases:
		
		\noindent
		\textbf{Case I:} Suppose that $G$ is not a disjoint union of complete graphs. Then there exists some $i\in [m]$ and $x\in W_i$ such that $N_G(x)\setminus W_i\neq\emptyset$. Without loss of generality, let $i=1$. Moreover, let $W_1=\{x_1,\ldots,x_r\}$ for some $r\ge 2$ such that $x_r$ is a free vertex of $G$, and $N_G(x_1)\setminus W_1\neq\emptyset$. Suppose $N_G(x_1)\setminus W_1=\{y_1,\ldots,y_s\}$.
		
		\noindent
		\textbf{Claim 1:} For each $l\in[s-1]\cup\{0\}$, 
		\[
		\mathrm{reg}((I(G)^{\{2\}}+( x_1y_1,\ldots,x_1y_l)):x_1y_{l+1})\le\adms(G,2),
		\]
		where $I(G)^{\{2\}}+( x_1y_1,\ldots,x_1y_l)=I(G)^{\{2\}}$ in case $l=0$.
		
		\noindent
		\textit{Proof of Claim 1}: Let $G_l=G\setminus\{y_1,\ldots,y_{l}\}$, where $G_0=G$. Then, it is easy to see that $G_l$ is again a Cohen-Macaulay chordal graph. Now,
		\begin{align*}
			\reg((I(G)^{\{2\}}+\l x_1y_1,\ldots,x_1y_l\r):x_1y_{l+1})&=\reg((I(G_l)^{\{2\}}:x_1y_{l+1})+\l y_1,\ldots,y_l\r)\\
			&=\reg(I(\widetilde{G_l}))\\		
&			= \nu_1(\widetilde{G_l})+1\\
			&\le \adms(G_l,2)\\
			 &\le \adms(G,2),
		\end{align*}
		where $\widetilde{G_l}$ is a weakly chordal graph (by \Cref{lem: CM chordal colon}); the equality in the third line follows from \cite[Theorem 14]{Woodroofe2014}, the inequality in the fourth line follows from \Cref{lem: CM chordal aim}, and the inequality in the fifth line follows from \Cref{lem: induced adm}. This completes the proof of Claim~1.
		
		\noindent
		\textbf{Claim 2:} For each $p\in[r-1]$, 
		\[
		\mathrm{reg}((I(G)^{\{2\}}+( x_1y_1,\ldots,x_1y_s,x_1x_2,\ldots,x_1x_p)):x_1x_{p+1})\le \adms(G,2),
		\]
		where $I(G)^{\{2\}}+( x_1y_1,\ldots,x_1y_s,x_1x_2,\ldots,x_1x_p)=I(G)^{\{2\}}+( x_1y_1,\ldots,x_1y_s)$ in case $p=1$.
		
		\noindent
		\textit{Proof of Claim 2}: Let $H_p=G\setminus\{y_1,\ldots,y_{s},x_2,\ldots,x_p\}$, where $H_1=G\setminus\{y_1,\ldots,y_{s}\}$. Then again, it is easy to see that $H_p$ is a Cohen-Macaulay chordal graph. Moreover, both $x_{1}$ and $x_{r}$ are free vertices of $H_p$. Now,
		\begin{align*}
			\mathrm{reg}((I(G)^{\{2\}}+( x_1y_1,&\ldots,x_1y_s,x_1x_2,\ldots,x_1x_p)):x_1x_{p+1})\\& =\mathrm{reg}((I(H_p)^{\{2\}}:x_1x_{p+1})+( y_1,\ldots,y_s,x_2,\ldots,x_p))\\
			& =\reg(I(H_p\setminus \{x_1,x_{p+1},x_{p+2},\dots x_r\}))\\
			&= \nu_1(H_p\setminus \{x_1,x_{p+1},x_{p+2},\dots x_r\})+1 \\
			&\le  \adms(H_p,2)\\
			&
			\le \adms(G,2),
		\end{align*}
		where the equality in the second line and the inequality in the fourth line follow from \Cref{lem: CM chordal aim}, the equality in the third line follows from \cite[Theorem 14]{Woodroofe2014},  and the last inequality follows from \Cref{lem: induced adm}. This completes the proof of Claim 2.
		
		Let $\Gamma=G\setminus N_G[x_1]$. Then $\Gamma$ is a Cohen-Macaulay chordal graph. Moreover,
		\begin{equation}\label{eq31}
			\begin{split}
				\reg((I(G)^{\{2\}}+( x_1y_1,\ldots,x_1y_s,x_1x_2,\ldots,x_1x_r)):x_1)&=\reg(I(\Gamma)^{\{2\}}+\l y_1,\ldots,y_s,x_2,\ldots,x_r\r)\\
				&\le \adms(\Gamma,2)+2\\
				&\le \adms(G,2)+1,
			\end{split}
		\end{equation}
		where the equality follows from \cite[Lemma 2.9]{DRS20242}, the first inequality follows from the induction hypothesis, and the second inequality follows from \Cref{lem: ind lemma}.
		
		Next, we have
		\begin{equation}\label{eq32}
			\begin{split}
				\reg(I(G)^{\{2\}}+\l x_1y_1,\ldots,x_1y_s,x_1x_2,\ldots,x_1x_r,x_1\r)&=\reg(I(G\setminus x_1)^{\{2\}})\\
				&\le\adms(G\setminus x_1,2)+2\\
				&\le \adms(G,2)+2,
			\end{split}
		\end{equation}
		where the first and second inequalities follow from the induction hypothesis and \Cref{lem: induced adm}, respectively. Now, using the regularity lemma (\Cref{regularity lemma}) along with inequalities~(\ref{eq31}) and (\ref{eq32}), we get 
		\[\reg(I(G)^{\{2\}}+\l x_1y_1,\ldots,x_1y_s,x_1x_2,\ldots,x_1x_r\r)\le\adms(G,2)+2.\] 
		Applying the regularity lemma with the above inequality and the inequality proved in Claim 2, we get \[\reg(I(G)^{\{2\}}+\l  x_1y_1,\ldots,x_1y_s,x_1x_2,\ldots,x_1x_{r-1}\r)\le\adms(G,2)+2.\] 
		By a repeated use of Claim 2 and the regularity lemma, we get $\reg(I(G)^{\{2\}}+\l x_1y_1,\ldots,x_1y_s\r)\le\adms(G,2)+2$. Now, from Claim 1 we have $\reg((I(G)^{\{2\}}+\l x_1y_1,\ldots,x_1y_{s-1}\r):x_1y_{s})\le\adms(G,2)$. Thus using the regularity lemma again we obtain $\reg((I(G)^{\{2\}}+\l x_1y_1,\ldots,x_1y_{s-1}\r)\le\adms(G,2)+2$. By a repeated use of Claim 1 and the regularity lemma this time, we finally get $\reg(I(G)^{\{2\}}\le\adms(G,2)+2$, as desired.
		
		\noindent
		\textbf{Case II:} $G$ is a disjoint union of complete graphs. In this case, let $W_1=\{x_1,\ldots,x_r\}$ for some $r\ge 2$. Then, proceeding as in Case I above, we obtain the following:
		\begin{enumerate}[label=(\roman*)]
			\item For each $p\in[r-1]$, $\mathrm{reg}((I(G)^{\{2\}}+( x_1x_2,\ldots,x_1x_p)):x_1x_{p+1})\le\adms(G,2)$, where $I(G)^{\{2\}}+( x_1x_2,\ldots,x_1x_p)=I(G)^{\{2\}}$ in case $p=1$;
			
			\item $ \mathrm{reg}((I(G)^{\{2\}}+( x_1x_2,\ldots,x_1x_r)):x_1)\le\adms(G,2)+1$;
			
			\item $ \mathrm{reg}((I(G)^{\{2\}}+( x_1x_2,\ldots,x_1x_r,x_1))\le\adms(G,2)+2$.
		\end{enumerate}
		Now, using the regularity lemma along with (ii) and (iii) above, we have $\mathrm{reg}(I(G)^{\{2\}}+( x_1x_2,\ldots,x_1x_r))\le \adms(G,2)+2$. Thus, as before, by a repeated use of the regularity lemma and (i) above, we obtain $\reg(I(G)^{\{2\}})\le\adms(G,2)+2$. This completes the proof of the theorem.
	\end{proof}

    \begin{remark}
As established in \Cref{thm: block graph main} and \Cref{thm: CMchordal}, one has 
an exact formula for $\reg(I(G)^{\{k\}})$ for all $k$ when $G$ is a block graph, 
and for $\reg(I(G)^{\{2\}})$ when $G$ is a Cohen--Macaulay chordal graph. These 
constitute significant subclasses of chordal graphs. It is therefore natural to 
ask whether the equality 
\[
\reg(I(G)^{\{k\}}) \;=\; \adms(G,k)+k
\]
holds for all chordal graphs and for all $1 \leq k \leq \nu(G)$. Our computations using Macaulay2 \cite{M2} verify that this formula is indeed valid for all chordal graphs up to $6$ vertices.

    \end{remark}

    \section*{Acknowledgements}

The authors thank Nursel Erey, S. A. Seyed Fakhari, and Takayuki Hibi for some helpful comments. Chau, Das, and Roy are supported by Postdoctoral Fellowships at the Chennai Mathematical Institute and a grant from the Infosys Foundation.

\subsection*{Data availability statement} Data sharing does not apply to this article as no new data were created or analyzed in this study.

 \subsection*{Conflict of interest} The authors declare that they have no known competing financial interests or personal relationships that could have appeared to influence the work reported in this paper.

\bibliographystyle{abbrv}
\bibliography{ref}
\end{document}